\newtheorem{theorem}{Theorem}[section]
\newtheorem{lemma}[theorem]{Lemma}
\newtheorem{cor}[theorem]{Corollary}
\newtheorem{proposition}[theorem]{Proposition}
\newtheorem{mydef}[theorem]{Definition}
\newtheorem{remark}[theorem]{Remark}
\def\E{\mathbb{E}}
\def\R{\mathbb{R}}
\def\P{\mathbb{P}}
\def\Q{\mathbb{Q}}
\def\N{\mathbb{N}}
\def\1{\mathbbm{1}}
\def\D1{\frac{\partial}{\partial x}}
\title{On a multi-dimensional McKean-Vlasov SDE with memorial and singular interaction associated to the parabolic-parabolic Keller-Segel model }
\author{Milica Toma\v{s}evi\'c\footnote{ CMAP, CNRS, Ecole Polytechnique, Institut Polytechnique de Paris, 
91128 Palaiseau, France; milica.tomasevic@polytechnique.edu.} , Guillaume Woessner \footnote{ PROB, MATH, SB, EPFL, CH-1015 Lausanne, Switzerland; guillaume.woessner@epfl.ch.}}
\date{}
\begin{document}
\maketitle
\noindent
\textbf{Abstract:} In this work we firstly prove the well-posedness of the non-linear martingale problem related to a McKean-Vlasov stochastic differential equation with singular interaction kernel in $\R^d$ for $d\geq 3$.  The particularity of our setting is that the McKean-Vlasov process we study interacts at each time  with all its past time marginal laws by means of a singular space-time kernel.\\
Secondly, we prove that our stochastic process is a probabilistic interpretation for the parabolic-parabolic Keller-Segel system in $\R^d$. We thus obtain a well-posedness result to the latter under explicit smallness condition on the parameters of the model. \\
\textbf{Key words:}  Keller–Segel system; Singular McKean-Vlasov non-linear stochastic differential equation.\\
\textbf{Classification:} 60H30, 60H10.

\section{Introduction and results}
\subsection{The problem}
The goal of this work is to obtain, in the multi-dimensional framework ($d\geq 3$), a well-posedness result for the following non-linear McKean-Vlasov stochastic differential equation (SDE): 
\begin{equation}
\label{EDS_MKV}
\begin{cases}
&dX_t = \chi \, b_0(t,X_t)dt +  \chi\left( \int_0^t  K_{t-s}\ast p_s(X_t)\,ds\right)dt + dW_t, \quad t>0,\\
& \mathcal{L}(X_t)= p_t, \ t >0, \quad \mathcal{L}(X_0)=\rho_0,\\
\end{cases}
\end{equation}
and to establish its relationship with a system of two parabolic PDEs coupled in a non-linear way.\\
Here $\chi>0$ and $W$ is a $d$-dimensional standard Brownian motion on a filtered probability space $(\Omega, \mathcal{F}, \P)$. The linear part of the drift $b_0$ belongs to $L^1_{loc}(\R_+;L^r(\R^d))$ for $r\geq 2$ and will be made explicit  below.

The main difficulty when dealing with  \eqref{EDS_MKV} is the unusual interaction the process $X$ has with its law. At each time $t>0$, $X_t$ interacts with all its past marginal laws up to $t$ by means of the following singular time-space kernel $K: \R_+ \times \R^d \to \R^d $ 
$$K_t(x):=-\dfrac{x}{(2\pi t)^{d/2}t}e^{-\frac{|x|^2}{2t}-\lambda t},$$
where $\lambda\geq 0$. Notice that K is not well defined when $(t,x)\to (0,0)$ and that the time integral in \eqref{EDS_MKV} may not be well defined as one should integrate around $t$ a time function of the order $(t-s)^{-\frac{d+1}{2}} ds$.

The above interaction may look surprising, but it is actually a consequence of a rather physical (more precisely, biological)  situation. Namely, we will prove here that the above process is a probabilistic interpretation of a PDE. Namely, it will be established that $X$ is   a typical particle in an infinite system of particles undergoing the dynamics of a Fokker-Planck equation which is, in the drift term, coupled with another parabolic equation. This system reads:

\begin{equation}
\label{EDP_KS}
\begin{cases}
& \partial_t \rho(t,x) = \frac{1}{2}\Delta  \rho(t,x)-\chi \nabla \cdot \left(\rho(t,x)\nabla c(t,x)\right), \qquad t>0,\quad x\in\R^d \\
& \partial_t c(t,x) = \frac{1}{2}\Delta c(t,x)-\lambda c(t,x)+\rho(t,x), \qquad t>0, \quad x\in\R^d, \\
& \rho(0,x)=\rho_0(x), \quad c(0,x)=c_0(x), \qquad x\in\R^d,
\end{cases}
\end{equation}
and it is the well-known parabolic-parabolic Keller-Segel minimal model \cite{KELLER_SEGEL1, KELLER_SEGEL2, KELLER_SEGEL3}. It describes the directed movement of a population (like bacteria or cells) triggered by  the presence of an attractive substance (the so-called \textit{chemo-attractant}). This phenomenon is called \textit{chemotaxis}. The functions $\rho$ and $c$ indicate respectively the population density and the chemo-attractant concentration in time and space. The cells diffuse and follow the gradient of the concentration of the chemical substance with the \textit{chemo-attractant sensitivity} $\chi$, while the chemical also diffuses, decays with rate $\lambda\geq 0$ and it is produced by the cell population with rate $1$. \\
The Keller-Segel system has been extensively studied in the past 40 years and is still a very active topic. One of its most important features is that the solutions may blow up in finite time, even though the total mass of the equation \eqref{EDP_KS} is preserved. This happens depending on the dimension $d$ and the size of parameter $\chi$ (see Horstmann \cite{Horstmann1} for a complete review). In particular, for the parabolic-elliptic version of \eqref{EDP_KS} blow-up in finite time may occur for $d=2$ if $\chi >8\pi$ and for $d\geq 3$ if $\frac{1}{2}\int |x|^2 u_0(dx)\leq  C(\chi)$ for some fixed $ C(\chi)>0$ (for a precise statement see \cite[Thm.~2.2]{perthame}). In the parabolic-parabolic setting, the solutions are global when the total mass of the initial condition is small enough, but above the critical value there exists solutions that blow up in finite time along other that are global (see \cite{Mizo}).  Our objective here is to derive a probabilistic framework for studying the doubly parabolic system in higher dimensions. As a first step, we will prove the well-posedness of \eqref{EDS_MKV} and its link to \eqref{EDP_KS}. The next steps are discussed in the perspectives.
\newline
~\\
In order to see \eqref{EDS_MKV} as a probabilistic interpretation of \eqref{EDP_KS} one should adopt formally the following decoupling strategy. Write the Duhamel formula for the concentration equation,
\begin{equation}
\label{DUH_c}
c(t,x)=e^{-\lambda t}(g_t\ast c_0) (x) + \int_0^t e^{-\lambda s} (\rho_{t-s} \ast g_s)(x)ds,
\end{equation} 
and plug it in the density equation by putting the gradient on the Gaussian density $g_t(x):= \frac{1}{(2\pi t)^{d/2}}e^{-\frac{|x|^2}{2t}}$. Then, writing the stochastic process corresponding to the latter, one obtains \eqref{EDS_MKV}.  This procedure was first done by Talay and Toma\v sevi\'c~\cite{Mi-De} in the one-dimensional framework and is explained in detail in Toma\v sevi\'c~\cite{tomasevic.these}. In order to rigorously justify it, one should first construct the process  \eqref{EDS_MKV} and extract the family of one dimensional time marginal laws of $X$, $(\rho^X_t)_{t\geq 0}$. Second, one should define $c^X$ as a transformation of the family $ \rho^X$ given in \eqref{DUH_c}. Finally, it should be proven that the couple $(\rho^X,c^X)$ is a solution to \eqref{EDP_KS} in some suitable sense.

In \cite{Mi-De}, respectively Toma\v sevi\'c~\cite{tomasevic.2}, the authors find conditions on the parameters and/or on the initial data, under which the existence and uniqueness of a martingale problem related to \eqref{EDS_MKV} holds in dimension~$1$, respectively in dimension~$2$ and they perform the above program. In this work, we will tackle the case of $d\geq 3$. 
\subsection{Multi-dimensional setting vs $d=1,2$}
\label{ss_multi_d}
 The main ingredient when proving existence to \eqref{EDS_MKV} is that one needs to construct a solution in a suitable probability space in which the one dimensional time marginals of the process belong to functional spaces that are able to tame the singularity of the kernel through the convolution in space. In this section we explain how the multi-dimensional setting $d\geq 3$ compares to the cases $d=1$ and $d=2$ and we  give a heuristic reasoning that leads us to the appropriate functional setting. 

It is worth of noticing that the space-time singularity of the kernel $K$ increases with the increase of the spatial dimension $d$. As the kernel is convoluted in time and space with the density of the process, we need more and more regularity on the latter to handle the singularity of the kernel in the non-linear part of the drift of the process. One way to see this is to check the mixed $L^q_{loc}(L^p(\R^d))$-regularity of $K$ for $p,q\geq 1$. As $K$ behaves as the spatial derivative of the heat kernel, it obviously belongs to  $L^1_{loc}(L^1(\R^d))$ for any $d\geq 1$. However, it belongs to $L^1_{loc}(L^p(\R^d))$ for $p\in[1, \frac{d}{d-1})$ which is more and more restrictive as $d$ increases. \\
This is in complete contrast with the case of dimension $1$ where $K\in L^1_{loc}(L^p(\R))$ for any $p\geq 1$.  That is why in \cite{Mi-De}, global in time solutions to \eqref{EDS_MKV}
 were constructed for $p_0\in L^1(\R)$ and without any condition on the parameters of the model. The authors worked with probability spaces where the time marginal laws of the non-linear process had the following behaviour: $\sup_{t\leq T}\sqrt{t}\|p_t\|_\infty<\infty $. Thus, the corresponding drift was uniformly bounded in time and space. This was achieved by Picard's iterations and simultaneous controls on the drift and density terms in each iteration.\\
 In dimension $2$ such strategy was no longer possible and was replaced by a regularization procedure and the drift-density controls of the regularised process in suitable spaces. To obtain global existence, the initial condition in \cite{tomasevic.2} had the same regularity as in $d=1$ case, but there was a restriction on the parameter $\chi$ to account for the increased singularity of the kernel. The author worked with time marginal laws that essentially had Gaussian behaviour, i.e.  $\sup_{t\leq T}t^{1-\frac{1}{p}}\|p_t\|_p<\infty $ for any $p>1$. This lead to a drift term $(b_t)_{t\leq T}$ whose regularity was $\sup_{t\leq T}t^{\frac{1}{2}-\frac{1}{r}}\|b_t\|_r<\infty $, for $r\in [2,\infty]$.\\
 In the framework of $d\geq 3$, we adopt the strategy of the two-dimensional framework and we expect again a smallness  condition on the parameter $\chi$. However, as we will explain informally in what follows,   an additional condition for the initial condition $p_0$ is also necessary. 
 
 To find the suitable functional framework in $d\geq 3$ case, let us analyse a-priori the following mild equation for the one dimensional time marginals of \eqref{EDS_MKV}: 
 \begin{equation}
     \label{eq:mild-apriori}
     p_t = g_t\ast p_0 -  \sum_{i=1}^d \int_0^t \nabla_i g_{t-s}\ast (p_s b_s^i )ds,
 \end{equation}
 where $b_t$ denotes the sum of the linear and non-linear drift in \eqref{EDS_MKV} at time $t>0$ and $b_t^i$ its $i$-th component. Naturally, we first proceed as in $d=2$ case and 
assume only that $p_0\in L^1(\R^d)$. Then, the term $ g_t\ast p_0$ will have the Gaussian regularity in $\R^d$, that is for any $q>1$ and $t>0$, 
\begin{equation}
    \label{eq:regularity}
     \|g_t\ast p_0\|_q\leq C t^{-\frac{d}{2}(1-\frac{1}{q})}.
\end{equation}
We cannot expect a better behaviour  for $\|p_t\|_q$ while $t\to 0$ and we certainly do not wish for it to be worse (as the drift is the convolution of our singular kernel with the density). Hence, we search for the a-priori estimates on $p_t$ of the form $ \|p_t\|_p\leq C t^{-\frac{d}{2}(1-\frac{1}{q})}$ for a suitable $q$.

Supposing the latter, we will first derive the behaviour of the drift term and than use it  to come back to \eqref{eq:mild-apriori} and see if the regularity of $p_t$ is preserved. Let $p_0 \in L^1(\R^d)$, $c_0$ in a suitable space and $\sup_{t\leq T}t^{\frac{d}{2}(1-\frac{1}{q})}\|p_t\|_q<\infty $ for some $q$ to be fixed. Then, after a convolution inequality one has that $$\|b_t\|_r<\|b_0(t,\cdot)\|_r + C \int_0^t \frac{1}{(t-s)^{\frac{d}{2}(\frac{1}{q}-\frac{1}{r})+\frac{1}{2}} s^{\frac{d}{2}(1-\frac{1}{q})}}  ds \leq \frac{C}{t^{\frac{d}{2}(1-\frac{1}{r})-\frac{1}{2}}},$$ for suitable $c_0$,  $q< \frac{d}{d-2}$  and $\frac{1}{r}>\frac{1}{q}-\frac{1}{d}$ (which gives with the aforementioned condition on $q$ that $r<\frac{d}{d-3}$). Now, having the drift and density regularity in mind the idea is to treat the second term in the r.h.s. of \eqref{eq:mild-apriori} with a convolution and Cauchy-Schwarz inequality successively. Namely, denoting by $q'$ the conjugate of $q$, notice that $1+\frac{1}{q}= \frac{1}{q'}+\frac{q}{p}$,
\begin{equation}
\label{eq:mild-apriori2}
\sum_{i=1}^d \int_0^t \|\nabla_i g_{t-s}\ast (p_s b_s^i )\|_q ds \leq C_d  \int_0^t \frac{1}{(t-s)^{\frac{d}{2}(1-\frac{1}{q'})+\frac{1}{2}} s^{\frac{d}{2}(1-\frac{1}{q})} s^{\frac{d}{2}(1-\frac{1}{q})-\frac{1}{2}}}ds.
\end{equation}
Assume we are able to integrate, then the term on the r.h.s. of \eqref{eq:mild-apriori2} would have the regularity $t^{-\frac{d}{2}(2-\frac{1}{q}) +1}$ (for more details see \eqref{6.3.2}). This will  be the desired $t^{-\frac{d}{2}(1-\frac{1}{q})}$ if and only if $d=2$. Hence, the case $d=2$ is somehow a turning point as it is the one that introduces the parameter condition without imposing more regularity on the initial condition. In fact, it will not even be possible to integrate the r.h.s. of \eqref{eq:mild-apriori2} whatever the value of $q$ is. Indeed, we would need that simultaneously  $d(1-\frac{1}{q})-\frac{1}{2}<1$ and $\frac{d}{2}(1-\frac{1}{q'})+\frac{1}{2}<1$ which becomes 
$q<\frac{2d}{2d-3}$  and $q>d$.
This is not possible for $d\geq 3$ (while for $d=2$ this condition is $q\in (2,4)$). Hence the above reasoning cannot work if we only have $p_0\in L^1(\R^d)$.

In the next few lines, we will see  how, imposing more regularity on the initial condition, the above framework changes.  Let us assume that $p_0\in L^1\cap L^{q_0}(\R^d)$ for some $q_0>1$ that we will fix below. First, we see that after a convolution inequality one has for some $q\geq 1$,
$$ \|g_t\ast p_0\|_q\leq C \|p_0\|_{q_0} t^{-\frac{d}{2}(\frac{1}{q_0}-\frac{1}{q})}.$$
 As before,  we will  search for the a-priori estimates on $p_t$ of the form $ \|p_t\|_q\leq C t^{-\frac{d}{2}(\frac{1}{q_0}-\frac{1}{q})}$ for a suitable $q$ and $q_0$.
Of course, as this explodes slower around zero than \eqref{eq:regularity}, we expect that this will give us enough space to make work the above arguments.\\
As a second step, we check how the regularity of the drift changes. For suitable values of $r\geq 1$ (admitting that one may integrate), it comes
$$\|b_t\|_r<\|b_0(t,\cdot)\|_r + C \int_0^t \frac{1}{(t-s)^{\frac{d}{2}(\frac{1}{q}-\frac{1}{r})+\frac{1}{2}} s^{\frac{d}{2}(\frac{1}{q_0}-\frac{1}{q})}}  ds \leq \frac{C}{t^{\frac{d}{2}(\frac{1}{q_0}-\frac{1}{r})-\frac{1}{2}}}.$$ 
With this regularity, \eqref{eq:mild-apriori2} transforms into 
\begin{equation}
\label{eq:mild-apriori3}
\sum_{i=1}^d \int_0^t \|\nabla_i g_{t-s}\ast (p_s b_s^i )\|_q ds \leq C_d  \int_0^t \frac{1}{(t-s)^{\frac{d}{2}(1-\frac{1}{q'})+\frac{1}{2}} s^{\frac{d}{2}(\frac{1}{q_0}-\frac{1}{q})} s^{\frac{d}{2}(\frac{1}{q_0}-\frac{1}{q})-\frac{1}{2}}}ds.
\end{equation}
Assuming once again that we can integrate, the term on the r.h.s. of \eqref{eq:mild-apriori3} would have the regularity $t^{-\frac{d}{2}(\frac{2}{q_0}-\frac{1}{q}) +1}$.  This will be the desired $t^{-\frac{d}{2}(\frac{1}{q_0}-\frac{1}{q})}$ if and only if $q_0=\frac{d}{2}$. In addition, fixing the latter integrability condition will come down to $q\in (d,2d)$. 

The conclusion is that, contrary to the cases $d=1,2$, in higher dimensions the Gaussian regularity does not suffice to integrate our singular kernel. However, we can still work in $L^q$ spaces by increasing the regularity of the initial condition. The additional regularity is the $L^{\frac{d}{2}}$ one, while the space in which we work is $\sup_{t\leq T} t^{1-\frac{d}{2q}}\|p_t\|_q<\infty$, for $q\in (d,2d)$.  The method we employ generalizes the method developed in \cite{tomasevic.2} with the above difference in the functional setting. \\
Another important difference is the following. In \cite{tomasevic.2} the functional setting we mentioned worked for any $q\geq 1$. As we have seen above, first there was a condition that $q\in (2,4)$, but actually after obtaining the correct estimates for such $q$, one can generalize to any $\tilde{q}\in (1,q)$ by interpolation and any $\tilde{q}>q$ using parabolic regularity. Here, the situation is more involved. The first step passes using the above computations for $q\in (d,2d)$. Also, the parabolic regularity does the job for $\tilde{q}>q$. However, a simple interpolation between $1$ and $q$ does not give the correct time decay. This is because we have imposed more regularity on the initial condition and actually there should not be any time decay for $\tilde{q}\in (1, \frac{d}{2})$. Hence, we need to apply an iterative procedure applying successively interpolation and parabolic regularity in order to obtain that $\sup_{t\leq T}\|p_t\|_{\tilde{q}}<\infty$ for $\tilde{q}\in (1, \frac{d}{2}]$. Then, interpolating between $\frac{d}{2}$ and $q$ gives the correct time decay and our regularity is valid for all values of $\tilde{q}>1$. That is why we have chosen to solve the martingale problem for a fixed $q\in (d,2d)$ and then extend the estimates separately in Section \ref{S_properties}. Hence, we have less restriction on the probability space we need to work in in order to solve the martingale problem than in \cite{tomasevic.2}.

The reader is reminded that the above discussion is purely heuristic and that the goal was just to explain the additional difficulty we met by considering the $d\geq 3$ framework. The above calculations and remarks will be rigorously  detailed when we will analyse a regularised version of \eqref{eq:mild-apriori} in Section~\ref{S_Preliminaries} and when we will obtain further properties of our solutions in Section \ref{S_properties}.

\subsection{Notations and definitions}

\begin{itemize}
    \item[$\bullet$] Let $0<a,b<1$, we denote the so-called $beta$-integral as
\begin{align}
\beta(a,b):=\int_0^1 \dfrac{1}{u^a(1-u)^b}du.
\end{align}
The change of variables $\frac{s}{t}=u$ leads to the following frequently used identity, for $t>0$:
\begin{align}
\label{6.3.2}
\int_0^t \dfrac{1}{s^a(t-s)^b}ds=t^{1-(a+b)}\beta(a,b).
\end{align}
Moreover, one can show that for any $\varepsilon>0$
\begin{align}
\label{6.3.2.1}
\beta_\varepsilon := \sup \{\beta(a,b)~:~a,b \leqslant 1-\varepsilon\} <\infty.
\end{align}
    \item[$\bullet$] In this paper, $L^q(\R^d)$ for $q\geq 1$ will always denote the usual space of functions on $\R^d$ whose norm at the power $q$ is integrable, and the associated norm is denoted by $\|\cdot\|_q$. Moreover, $W^{1,q}(\R^d)$ is the related usual Sobolev space, whose norm will be denoted by $\|\cdot\|_{1,q}$.
    \item[$\bullet$] Throughout the paper we will note by $q'\geq 1$ the conjugate of a parameter $q\geq 1$ defined by the equality $1=\frac{1}{q}+\frac{1}{q'}$.
    \item[$\bullet$] For $(t,x)\in \R^\ast_+ \times \R^d$, we denote the Gaussian density by 
\begin{align}
\label{normale}
g_t(x):=\dfrac{1}{(2\pi t)^{d/2}}\exp\left\{-\dfrac{|x|^2}{2t}\right\}.
\end{align}
In this notation, our interaction kernel becomes $K_t(x)=e^{-\lambda t} \nabla g_t(x)$. We will frequently use throughout the whole paper the following facts about the Gaussian density.
For $1\leq r \leq \infty$, one has
$$\|g_t\|_r = \dfrac{C_0(r)}{t^{\frac{d}{2}(1-\frac{1}{r})}}.$$ Moreover, for all $i = 1,\dots,d$ one has,
$$\|\nabla_i g_t\|_r = \dfrac{C_1(r)}{t^{\frac{d}{2}(1-\frac{1}{r})+\frac{1}{2}}}.$$
The constants appearing above are made explicit in the appendix (see Lemma~\ref{lemme_normale}). 
    \item[$\bullet$] Throughout the paper the linear part of the drift is given by $b_0(t,x):=e^{-\lambda t} g_t\ast \nabla c_0$, where $c_0$ is the initial chemo-attractant concentration given in \eqref{EDP_KS}. 

For a probability measure $\Q$ on  $C([0,T],\R^d)$ we usually denote by $q_t$ its one-dimensional marginals when they are absolutely continuous w.r.t. Lebesgue measure, and we introduce the measure dependant map $b: \R_+ \times \R^d \times \mathcal{P}(C([0,T],\R^d)) \to \R^d$, assuming the the marginal densities $q_t$ indeed exist,
\begin{equation}
\label{def_b}
b(t,x,\Q):= \chi b_0(t,x) + \chi\left( \int_0^t K_{t-s}\ast q_s(x)ds\right)
\end{equation}
\item[$\bullet$] Finally, the notations $C$ or $C(\cdot)$ will denote throughout the paper a constant that does not depend on any relevant variable, and may change its value from one line to the other.
\end{itemize}

\subsection{Main results}
Let us first define the martingale problem related to \eqref{EDS_MKV}. As we work with weak solutions, we solve the non linear martingale problem related to (\ref{EDS_MKV}). By standard arguments, one passes from a solution to a (non-linear) martingale problem to a weak solution of a (non-linear) SDE \eqref{EDS_MKV}. The main issue when defining the martingale problem is choosing the right measure space for its solution. In other words, the probability measure on $\mathcal{C}([0,T];\R^d)$  that solves this problem must have some additional properties that enables us to tame the singularity of the interaction kernel. We have the following definition: 
\begin{mydef}
\label{defMP}
Let $T>0$, $\chi>0$, $d\geq 3$ and $q\in (d,2d)$. Consider the canonical space $\mathcal{C}([0,T];\R^d)$ equipped with its canonical filtration. Let $\P$ be a probability measure on this canonical space and denote for $t>0$ by $\P_t$ its one dimensional time marginals. $\P$ solves the non-linear 
martingale problem \hyperref[defMP]{$\mathcal{(MP)}$} if:
\begin{enumerate}[(i)]
\item $\P_0 = \rho_0$.
\item For any $t\in (0,T]$, $\P_t$ admit densities $p_t$ w.r.t. Lebesgue measure on $\R^d$. In addition, they satisfy 
\begin{equation}
\label{dens_Est_d}
\sup_{t\leq T}t^{1-\frac{d}{2q}}\|p_t\|_{L^q(\R^d)}\leq C_q(\chi),
\end{equation}
where $C_q(\chi)>0$.
\item For any $f \in C_K^2(\R^d)$ the process $(M_t)_{t\leq T}$, 
defined as
\begin{equation}
\label{def_mart}
M_t:=f(w_t)-f(w_0)-\int_0^t \Big[\frac{1}{2} \triangle f(w_u)+\nabla f(w_u)\cdot b(u, w_u, \P)  \Big]du
\end{equation}
is a $\P$-martingale where $(w_t)$ is the canonical process.  
\end{enumerate}
\end{mydef}
Notice that the condition ($ii$) is not usually required. Working with marginal laws that satisfy \textit{(ii)} ensures that the drift of \hyperref[defMP]{$\mathcal{(MP)}$} is integrable and so \hyperref[defMP]{$\mathcal{(MP)}$} is well defined. Indeed, for  $\nabla c_0 \in L^{d}(\R^d)$ and $\P$  as in \hyperref[defMP]{$\mathcal{(MP)}$}, one has for a fixed $u>0$, after applying  H\"older inequalities and Lemma \ref{lemme_normale},
$$\|b(u, \cdot, \P)\|_\infty \leq \|\nabla c\|_d \frac{C}{\sqrt{u}} + \int_0^u \|K_{u-s}\|_{q'}\|p_s\|_q ds\leq \frac{1}{\sqrt{u}}(\|\nabla c\|_d+ \beta(1-\frac{d}{2q},\frac{d}{2q} + \frac{1}{2} )).$$

Now, we may present our first main result about the existence of a solution to \hyperref[defMP]{$\mathcal{(MP)}$}.
\begin{theorem}
\label{6.2.3}
 Let $T>0, \lambda \geq 0$, $\chi>0$, $d\geq 3$ and fix an arbitrary $q\in (d,2d)$. Assume that $\rho_0$ is a probability density function that belongs to $L^{\frac{d}{2}}(\R^d)$, and suppose that $ c_0\in W^{1,d}(\R^d)$. Then \hyperref[defMP]{$\mathcal{(MP)}$} admits a solution under the following condition:
\begin{align}
\label{condition}
A\chi \|\nabla c_0\|_d + B\sqrt{\chi \|p_0\|_{\frac{d}{2}}} <1.
\end{align}
Here
\begin{align*}
& A:=dC_1(q')C_0\left(\tilde{q}_1\right)\beta\left(\frac{3}{2}-\frac{d}{q},\frac{d}{2}(1-\frac{1}{q'})+\frac{1}{2}\right),\\
&B:=2\sqrt{C_0(\tilde{q}_2)dC_1(q')C_1(1)\beta\left(\frac{3}{2}-\frac{d}{q},\frac{d}{2}(1-\frac{1}{q'})+\frac{1}{2}\right)\beta\left(1-\dfrac{d}{2q},\dfrac{1}{2}\right)},
\end{align*}
and $q'$ is such that $\frac{1}{q}+\frac{1}{q'}=1$, the functions $C_1(\cdot), C_2(\cdot)$ are defined in Lemma \ref{lemme_normale} and $\beta(\cdot,\cdot)$ in (\ref{6.3.6}), while $\tilde{q}_1=\frac{dq}{(d-1)q+d}$ and $\tilde{q}_2 = \frac{dq}{d+(d-2)q}$.
\end{theorem}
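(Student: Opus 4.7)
The plan is to proceed by regularization, uniform estimates, tightness and passage to the limit, following and adapting the two-dimensional scheme of~\cite{tomasevic.2} with the modified functional setting dictated by Section~\ref{ss_multi_d}. First, I would introduce a family of approximate martingale problems $(\mathcal{MP})^n$ in which the singular kernel $K_{t-s}$ is replaced by the smooth kernel $K_{t-s+1/n}$, so that the corresponding drift
\begin{equation*}
b^n(t,x,\P^n) = \chi\, b_0(t,x) + \chi \int_0^t K_{t-s+1/n}\ast p_s^n(x)\, ds
\end{equation*}
is bounded and continuous in all variables whenever the marginal densities $(p_t^n)$ stay in a weighted $L^q(\R^d)$ space. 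Existence of $\P^n$ then follows from a standard Picard iteration on the mild equation combined with a Girsanov change of measure against a reference Wiener measure.

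\textbf{Uniform a-priori estimate.} The heart of the argument is to bound $N(p^n) := \sup_{t\leq T} t^{1-d/(2q)}\|p_t^n\|_q$ uniformly in $n$ under the smallness assumption~\eqref{condition}. Starting from the mild formulation
\begin{equation*}
p_t^n = g_t\ast \rho_0 \;-\; \sum_{i=1}^d \int_0^t \nabla_i g_{t-s}\ast \bigl(p_s^n\, b^{n,i}_s\bigr)\, ds,
\end{equation*}
I would take the $L^q$ norm, apply Young's convolution inequality with the exponents $\tilde{q}_1$ and $\tilde{q}_2$ isolated in the statement, and split $b^n = \chi b_0 + \chi\int K^{(n)}\ast p^n$ into its linear and nonlinear contributions. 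The term $g_t\ast \rho_0$ is bounded by $C_0(\tilde{q}_2)\,\|\rho_0\|_{d/2}\, t^{-(1-d/(2q))}$ thanks to $\rho_0\in L^{d/2}(\R^d)$; the linear drift produces a contribution of order $A\chi\|\nabla c_0\|_d\, N(p^n)$; and the nonlinear drift, after a double application of H\"older's inequality together with two nested beta integrals~\eqref{6.3.2}, produces a term of order $\gamma\,\chi\, N(p^n)^2$ for an explicit $\gamma$. The integrability constraints on the beta integrals are precisely those pinning $q\in(d,2d)$. Altogether one obtains the quadratic inequality
\begin{equation*}
N(p^n) \;\leq\; C_0(\tilde{q}_2)\|\rho_0\|_{d/2} \;+\; A\chi\|\nabla c_0\|_d\, N(p^n) \;+\; \gamma\chi\, N(p^n)^2,
\end{equation*}
whose discriminant is non-negative exactly when $A\chi\|\nabla c_0\|_d + 2\sqrt{\gamma\chi\, C_0(\tilde{q}_2)\|\rho_0\|_{d/2}} < 1$, which is~\eqref{condition} with $B=2\sqrt{\gamma\, C_0(\tilde{q}_2)}$. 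A continuity-in-$t$ argument, together with the fact that $N(p^n)\to 0$ as $T\to 0$ in the smoothed setting, traps $N(p^n)$ below the smaller root uniformly in $n$.

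\textbf{Tightness and identification of the limit.} From this uniform bound, the heuristic of Section~\ref{ss_multi_d} rigorously yields $\sup_n \sup_{t\leq T} t^{1/2 - d/(2r)}\|b^n(t,\cdot,\P^n)\|_r \leq C$ for a suitable $r$, whence Kolmogorov's criterion gives tightness of $(\P^n)$ on $\mathcal{C}([0,T];\R^d)$. Extracting a weakly convergent subsequence $\P^{n_k}\to \P^\infty$, I would then verify the three conditions of $(\mathcal{MP})$: (i) is transmitted by construction; (ii) follows from lower semicontinuity of the weighted $L^q$ norm under weak convergence of marginals, after using the mild equation to upgrade weak convergence of time marginals to $L^q$-convergence of their densities; (iii) follows by passing to the limit in the martingale identity, the key point being continuity of the drift functional $\P\mapsto b(\cdot,\cdot,\P)$ on the set of measures satisfying the uniform bound, a direct consequence of the same singular-kernel estimates. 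The main obstacle throughout is closing the nonlinear term in the a-priori estimate: it is this quadratic self-interaction that forces both ingredients of~\eqref{condition}, the linear one in $\chi\|\nabla c_0\|_d$ coming from the heat extension of $\nabla c_0$, and the square-root one in $\chi\|\rho_0\|_{d/2}$ coming from the self-convolution of the singular kernel with the density.
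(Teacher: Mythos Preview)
Your proposal is correct and follows essentially the same route as the paper: regularize the kernel, establish well-posedness of the regularized SDE, derive the quadratic inequality for $\mathcal N^q_t(p^\varepsilon)$ via the mild equation and trap it below the smaller root using continuity and $\mathcal N^q_t\to 0$ as $t\to 0$, then obtain tightness from the resulting drift bounds and pass to the limit in the martingale identity. The only cosmetic differences are that the paper also regularizes the linear drift $b_0$ and uses the specific mollification $K_t^\varepsilon = \frac{t^{d/2+1}}{(t+\varepsilon)^{d/2+1}}K_t$ (regularizing only the prefactor, not the exponential), which streamlines the dominated-convergence arguments in the limit; and for point~(ii) the paper does not upgrade to $L^q$-convergence of densities but simply transfers the uniform $L^q$ bound to the limit via Riesz representation.
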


The technique we use to prove Theorem \ref{6.2.3} is the following. We start by conveniently regularizing the kernel $K$ and the linear drift $b_0$. The corresponding regularized process is well posed. Then, we extensively analyse its one dimensional marginal distributions in order to obtain a property of the type \eqref{dens_Est_d} independently of the regularization parameter. This is where the condition \eqref{condition} emerges and thanks to this we are able to obtain the tightness w.r.t the regularization parameter of the probability laws of the regularized process. Then, we prove that the limit point of this family of laws satisfies \hyperref[defMP]{$\mathcal{(MP)}$}.

So far we have constructed a weak solution to \eqref{EDS_MKV}. Our next goal is to validate \eqref{EDS_MKV} as a probabilistic interpretation of \eqref{EDP_KS}. With the following definition we  precise the notion of solutions to~\eqref{EDP_KS}.
\begin{mydef}
\label{6.2.4}
Given  $T>0, \lambda >0$, $\chi>0$ and  $q\in (d,2d)$, let $\rho_0$ be a probability density function that belongs to $L^{\frac{d}{2}}(\R^d)$, and let $ c_0\in W^{1,d}(\R^d)$. A pair $(\rho,c)$ is a solution of PDE (\ref{EDP_KS}) if for any $0<t \leq T$,  $\rho(t,\cdot)$ admits a density with respect to Lebesgue measure  such 
$$\sup_{t\leqslant T} t^{1-\frac{1}{q}} \|\rho(t,\cdot)\|_q \leqslant C_q,$$
for some  $C_q>0$. 
Moreover, the following equation is satisfied in the sense of the distributions,
\begin{align}
\label{6.10}
\rho(t,x)=g_t\ast\rho_0(x) - \chi \sum_{i=1}^d \int_0^t \nabla_i g_{t-s} \ast (\rho(s,\cdot)\nabla_i c(s,\cdot))(x)ds
\end{align}
with $$c(t,x) = e^{-\lambda t} (g(t,\cdot)\ast c_0)(x) + \int_0^t e^{-\lambda s}(g_s\ast \rho(t-s,\cdot))(x)ds,$$
\end{mydef}

Now, we are ready to state our corollary result of Theorem \ref{6.2.3} in which we construct a solution to \eqref{EDP_KS} from our non-linear process $X$.
\begin{cor}
\label{6.2.5}
Assume the hypothesis of Theorem~\ref{6.2.3}.  Denote  $\rho(t,\cdot): = \mathcal{L}(X_t)$, for $t\geq 0$ and define $c$ as a transformation of $\rho$ given by \eqref{DUH_c}. Then, this couple $(\rho,c)$ is a solution, in the sense of Definition \ref{6.2.4}, to (\ref{EDP_KS}).
\end{cor}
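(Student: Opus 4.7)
\textbf{Plan of proof of Corollary~\ref{6.2.5}.} The plan is to bridge the non-linear martingale problem solved by $X$ and the coupled pair of PDEs in \eqref{EDP_KS} by means of standard but careful PDE manipulations, using the $L^q$ regularity of $\rho_t:=\mathcal{L}(X_t)$ provided by Theorem~\ref{6.2.3}.

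First, I would rewrite the drift $b(t,x,\P)$ in terms of $c$. Since $K_t(x)=e^{-\lambda t}\nabla g_t(x)$ and, by commutation of convolution and derivative, $b_0(t,x)=\nabla\bigl[e^{-\lambda t}(g_t\ast c_0)(x)\bigr]$, it follows that
\begin{equation*}
b(t,x,\P)=\chi\,\nabla\!\left[e^{-\lambda t}(g_t\ast c_0)(x)+\int_0^t e^{-\lambda(t-s)}(g_{t-s}\ast\rho_s)(x)\,ds\right].
\end{equation*}
The change of variable $u=t-s$ identifies the bracketed quantity with $c(t,x)$ as given by the Duhamel formula \eqref{DUH_c}. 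Thus $b(t,x,\P)=\chi\nabla c(t,x)$, and by construction $c$ is the mild solution of the second equation in \eqref{EDP_KS}; the involved integrals converge thanks to $\rho_0\in L^{d/2}$, $c_0\in W^{1,d}$, and \eqref{dens_Est_d}.

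Second, I would convert the martingale problem into the Fokker--Planck equation for $\rho$. Taking $\P$-expectation in \eqref{def_mart} for $f\in C_K^2(\R^d)$ yields, for every $t\leq T$,
\begin{equation*}
\int f(x)\,p_t(x)\,dx-\int f\,d\rho_0=\int_0^t\!\int\Bigl[\tfrac12\Delta f(x)+\nabla f(x)\cdot b(u,x,\P)\Bigr]p_u(x)\,dx\,du.
\end{equation*}
Since $f$ has compact support while $p_u\,b(u,\cdot,\P)\in L^1\cap L^q$ in space with time-integrable norm (as already noted right after Definition~\ref{defMP}), I can integrate by parts to obtain the distributional equation $\partial_t\rho=\tfrac12\Delta\rho-\chi\,\nabla\!\cdot(\rho\nabla c)$. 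A Duhamel / variation-of-constants argument against the heat semigroup then converts this into the mild form \eqref{6.10}.

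Finally, I would verify the regularity requested by Definition~\ref{6.2.4}. The fixed-exponent estimate \eqref{dens_Est_d} for $q\in(d,2d)$ is inherited from Theorem~\ref{6.2.3}, while the iterative interpolation-plus-parabolic-regularity bootstrap sketched at the end of Section~\ref{ss_multi_d} and carried out in Section~\ref{S_properties} supplies the matching estimate for the remaining values of the exponent. The main obstacle is the justification of the interchange of expectation and space integration and of the integration by parts leading to the distributional Fokker--Planck equation: the drift $b(u,\cdot,\P)$ is only $L^\infty_x$ with a $u^{-1/2}$ blow-up as $u\to 0^+$, so the convergence of every convolution in \eqref{6.10} must be checked carefully using the a-priori estimates obtained during the proof of Theorem~\ref{6.2.3}. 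Once these pointwise integrability justifications are carried out, the passage from the non-linear martingale problem to the coupled PDE formulation follows the now-classical path used in \cite{Mi-De} and \cite{tomasevic.2}.
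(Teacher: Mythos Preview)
Your proposal is correct and follows essentially the same route as the paper: identify the drift $b(t,\cdot,\P)$ with $\chi\nabla c(t,\cdot)$ via the Duhamel representation of $c$, pass from the martingale problem to the weak Fokker--Planck equation by taking expectations, and then to the mild equation \eqref{6.10} by a heat-semigroup Duhamel argument (the paper simply cites \cite[Prop.~3.9]{tomasevic.2} for this last step). One small over-complication: Definition~\ref{6.2.4} only requires the density estimate for a \emph{single} fixed $q\in(d,2d)$, which is already provided by Theorem~\ref{6.2.3}; the interpolation/parabolic-regularity bootstrap of Section~\ref{S_properties} is an additional, separate result and is not needed to establish Corollary~\ref{6.2.5}.
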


Besides, we were able to obtain results about the uniqueness to the solution to EDS \eqref{EDS_MKV} given in Theorem \ref{6.2.3} and to EDP \eqref{EDP_KS} given in Corollary \ref{6.2.5}. 

\begin{proposition}
\label{6.2.5.1}
Assume the hypothesis of Theorem~\ref{6.2.3}. Under the additional condition
\begin{align}
\label{condition_2}
\chi  C(\|\nabla c_0\|_d , C_q)<1,
\end{align} 
where 
$$C(\|\nabla c_0\|_d , C_q) := C_1(1) \beta\left(\frac{1}{2},\frac{1}{2}\right) \left[ \|\nabla c_0\|_d + C_q C_1(q')\left(\beta\left(\frac{d}{2q}+\frac{1}{2}, 1 - \frac{d}{2q}\right) + 1 \right) \right],$$
 Keller-Segel system \eqref{EDP_KS} admits the  unique solution in the sense of Definition \ref{6.2.4}.
\end{proposition}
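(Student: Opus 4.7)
The plan is a standard difference argument applied to the mild (Duhamel) formulation~\eqref{6.10}. Let $(\rho_1,c_1),(\rho_2,c_2)$ be two solutions of~\eqref{EDP_KS} in the sense of Definition~\ref{6.2.4} corresponding to the same initial data $(\rho_0,c_0)$, and set $\Delta\rho:=\rho_1-\rho_2$, $\Delta c:=c_1-c_2$. The goal is to show that under~\eqref{condition_2} one has $\Delta\rho\equiv 0$ on $[0,T]$; uniqueness of $c$ is then immediate from the Duhamel formula for $c$. Subtracting the two mild identities and decomposing $\rho_1\nabla c_1-\rho_2\nabla c_2 = \Delta\rho\,\nabla c_1 + \rho_2\,\nabla\Delta c$ yields
\begin{equation*}
\Delta\rho(t) = -\chi\sum_{i=1}^d\int_0^t \nabla_i g_{t-s}\ast\bigl[\Delta\rho(s)\,\nabla_i c_1(s) + \rho_2(s)\,\nabla_i\Delta c(s)\bigr]\,ds,
\end{equation*}
while the Duhamel identity for $c$, applied to the difference, cancels the initial data and produces
\begin{equation*}
\nabla\Delta c(t,x) = \int_0^t e^{-\lambda s}\,\nabla g_s\ast\Delta\rho(t-s,\cdot)(x)\,ds,
\end{equation*}
so that the right-hand side of the equation for $\Delta\rho$ depends linearly on $\Delta\rho$ alone, with coefficients controlled a priori by $C_q$ and $\|\nabla c_0\|_d$.

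The estimation of $\|\Delta\rho(t)\|_q$ proceeds by successive applications of Young's convolution inequality and H\"older's inequality. For the outer convolution I would use $\|\nabla_i g_{t-s}\|_1 = C_1(1)(t-s)^{-1/2}$ and push the drift into $L^\infty$ via $\|fg\|_q\leq\|f\|_q\|g\|_\infty$. The two auxiliary $L^\infty$ bounds are obtained by writing each $\nabla c_i$ through Duhamel, bounding the initial-data contribution by $\|g_s\|_{d'}\|\nabla c_0\|_d$, and applying Young's inequality with $\|\nabla g_u\|_{q'}=C_1(q')u^{-d/(2q)-1/2}$ together with the a priori estimate on $\rho_i$ in $L^q$; this produces
\begin{equation*}
\|\nabla c_1(s)\|_\infty \leq \frac{1}{\sqrt{s}}\Bigl[\|\nabla c_0\|_d + C_q C_1(q')\,\beta\bigl(\tfrac{d}{2q}+\tfrac{1}{2},\,1-\tfrac{d}{2q}\bigr)\Bigr]
\end{equation*}
via \eqref{6.3.2}, together with the analogous control of $\|\nabla\Delta c(s)\|_\infty$ by $\sup_{u\leq s}\|\Delta\rho(u)\|_q$ scaled by $C_1(q')s^{-1/2}$, which is the source of the ``$+1$'' contribution in the statement (no $c_0$ term appears since it cancels in the difference). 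Inserting both bounds into the equation for $\Delta\rho$, using the a priori estimate on $\rho_2$ inside the $\rho_2\,\nabla\Delta c$ term, and performing the outer $s$-integration via $\int_0^t(t-s)^{-1/2}s^{-1/2}\,ds = \beta(1/2,1/2)$ combines all factors to give exactly $C(\|\nabla c_0\|_d,C_q)$, so that
\begin{equation*}
\sup_{t\leq T}\|\Delta\rho(t)\|_q \leq \chi\,C(\|\nabla c_0\|_d,C_q)\sup_{t\leq T}\|\Delta\rho(t)\|_q.
\end{equation*}
Under~\eqref{condition_2} this forces $\sup_{t\leq T}\|\Delta\rho(t)\|_q = 0$ and hence $\rho_1=\rho_2$, from which $c_1=c_2$ follows by the Duhamel formula.

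The main technical obstacle is the bookkeeping of the three nested time singularities---the $(t-s)^{-1/2}$ coming from $\nabla g_{t-s}$, the $s^{d/(2q)-1}$ from the a priori bound on $\rho_2$, and the further $s^{-1/2}$ generated by the inner Duhamel integrals in $\nabla c_1$ and $\nabla\Delta c$---which must recombine so that the outer $s$-integration yields precisely $\beta(1/2,1/2)$ with no residual time dependence. This constraint is what fixes the exponents $\tfrac{d}{2q}\pm\tfrac{1}{2}$ in the inner $\beta$-factor and, more fundamentally, is why the range $q\in(d,2d)$ inherited from Definition~\ref{6.2.4} is exactly what makes every integral converge; a secondary subtlety is that the bound on $\|\nabla\Delta c\|_\infty$ must be taken uniformly in $\sup_{u\leq s}\|\Delta\rho(u)\|_q$ rather than pointwise in $s$ so that the self-referential closure on the right-hand side is legitimate.
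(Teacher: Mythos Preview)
Your approach is exactly the paper's: subtract the two mild equations, split $\rho_1\nabla c_1-\rho_2\nabla c_2=\Delta\rho\,\nabla c_1+\rho_2\,\nabla\Delta c$, control $\|\nabla c_1(s)\|_\infty$ and $\|\nabla\Delta c(s)\|_\infty$ via H\"older with the exponent pair $(q,q')$, and close on $\sup_{t\le T}\|\Delta\rho(t)\|_q$.

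There is, however, a concrete slip that would break the bookkeeping you yourself flag as the main obstacle. You state that $\|\nabla\Delta c(s)\|_\infty$ is bounded by $\sup_{u\le s}\|\Delta\rho(u)\|_q$ ``scaled by $C_1(q')s^{-1/2}$''. The actual computation gives
\[
\|\nabla\Delta c(s)\|_\infty \le C_1(q')\int_0^s u^{-\frac{d}{2q}-\frac12}\,\|\Delta\rho(s-u)\|_q\,du
\le \frac{C_1(q')}{\tfrac12-\tfrac{d}{2q}}\,s^{\frac12-\frac{d}{2q}}\,\sup_{u\le s}\|\Delta\rho(u)\|_q,
\]
a \emph{positive} power of $s$ (since $q>d$). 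This is not cosmetic: with your stated $s^{-1/2}$ factor, the term $II$ would contain
\[
\int_0^t (t-s)^{-1/2}\,\|\rho_2(s)\|_q\,\|\nabla\Delta c(s)\|_\infty\,ds
\lesssim \int_0^t (t-s)^{-1/2}\,s^{-\frac32+\frac{d}{2q}}\,ds,
\]
which diverges at $s=0$ because $\tfrac{d}{2q}<\tfrac12$. With the correct positive power $s^{1/2-d/(2q)}$, the singularity of $\|\rho_2(s)\|_q\le C_q s^{-(1-d/(2q))}$ is exactly compensated, the integrand becomes $(t-s)^{-1/2}s^{-1/2}$, and you recover $\beta(\tfrac12,\tfrac12)$ as desired. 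Once this is fixed, your argument and the paper's coincide.
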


\begin{theorem}
\label{6.2.5.2}
Let the assumptions of Proposition \ref{6.2.5.1} hold. Then, uniqueness holds for the martingale problem  \hyperref[defMP]{$\mathcal{(MP)}$}.
\end{theorem}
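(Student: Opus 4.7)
The strategy is to decouple the non-linearity by first invoking the PDE uniqueness of Proposition~\ref{6.2.5.1}, thereby reducing $\mathcal{(MP)}$ to a \emph{linear} martingale problem with a prescribed but singular drift, and then establishing uniqueness of the latter.

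Let $\P^1$ and $\P^2$ be two solutions of $\mathcal{(MP)}$ starting from the same initial law $\rho_0$, and write $p^1_t, p^2_t$ for their one-dimensional time marginals, both satisfying \eqref{dens_Est_d}. Repeating the argument of Corollary~\ref{6.2.5} for each $\P^i$, and defining $c^i$ from $p^i$ via the Duhamel formula \eqref{DUH_c}, one checks that $(p^i, c^i)$ is a solution of \eqref{EDP_KS} in the sense of Definition~\ref{6.2.4}. Since condition \eqref{condition_2} is assumed, Proposition~\ref{6.2.5.1} forces $p^1_t = p^2_t$ and $c^1 = c^2$ on $(0, T]$. Consequently the drift field
\[
\hat b(t, x) := b(t, x, \P^1) = b(t, x, \P^2)
\]
is a fixed deterministic function, and both $\P^i$ solve the \emph{linear} martingale problem associated with $L_t := \tfrac{1}{2}\Delta + \hat b(t, \cdot) \cdot \nabla$ and initial distribution $\rho_0$.

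The remaining task is uniqueness of this linear problem. The computation displayed just after Definition~\ref{defMP} yields the sharp bound $\|\hat b_t\|_\infty \leq C t^{-1/2}$, which is the main obstacle: $\hat b \notin L^2((0,T); L^\infty(\R^d))$, so Novikov's condition cannot be applied on the whole interval $[0,T]$ and Krylov--R\"ockner type weak uniqueness does not apply directly either. The plan is therefore to localize in time. Fix $\epsilon \in (0, T)$; on $[\epsilon, T]$ the drift $\hat b$ is bounded, and so a standard Girsanov argument applied to the conditional law given $w_\epsilon$ shows that, once $p^i_\epsilon$ is fixed, the restriction of $\P^i$ to $\mathcal{F}_{[\epsilon, T]} := \sigma(w_t : t \in [\epsilon, T])$ is uniquely determined. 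Since $p^1_\epsilon = p^2_\epsilon$ by the previous step, this forces $\P^1|_{\mathcal{F}_{[\epsilon, T]}} = \P^2|_{\mathcal{F}_{[\epsilon, T]}}$.

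The family $\{\mathcal{F}_{[\epsilon, T]}\}_{\epsilon > 0}$ increases as $\epsilon \downarrow 0$, and its generated $\sigma$-algebra is $\mathcal{F}_{(0, T]}$, which coincides with the canonical $\sigma$-algebra $\mathcal{F}_{[0,T]}$ on $\mathcal{C}([0,T]; \R^d)$ because $w_0 = \lim_{\epsilon \to 0} w_\epsilon$ for continuous paths. A monotone-class argument then delivers $\P^1 = \P^2$. I expect the principal subtlety to lie in the Girsanov step on $[\epsilon, T]$, where one must verify weak well-posedness of the linear SDE with the (bounded but merely measurable in $x$) drift $\hat b|_{[\epsilon, T]}$ and propagate the equality of the time-$\epsilon$ marginals to equality of the full laws on $[\epsilon, T]$; in parallel, one must make sure that the constants entering condition \eqref{condition_2} are in fact the ones produced by the PDE comparison argument underlying Proposition~\ref{6.2.5.1}.
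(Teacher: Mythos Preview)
Your overall strategy---freeze the drift via the PDE uniqueness of Proposition~\ref{6.2.5.1} and then argue uniqueness of the resulting \emph{linear} martingale problem---is exactly the one sketched in the paper. The divergence is in how you handle the linear step. The paper invokes the ``transfer of uniqueness'' result of Trevisan: once the marginals $p_t$ are fixed, the linear Fokker--Planck equation with drift $\hat b$ has a unique solution in the relevant class, and Trevisan's theorem pushes this PDE uniqueness up to uniqueness of the linear martingale problem in one stroke, without needing to localise away from $t=0$. Your route is more hands-on: you observe that on $[\epsilon,T]$ the drift is bounded, so Girsanov (Novikov holds trivially) gives weak well-posedness of the linear SDE for any initial law at time $\epsilon$; since $\P^1_\epsilon=\P^2_\epsilon$, the two measures agree on $\mathcal F_{[\epsilon,T]}$, and a monotone-class argument combined with path continuity closes the gap at $t=0$. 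This is correct and arguably more elementary, as it avoids citing an external superposition/transfer theorem. The price is the extra care you flag yourself: one must check that the restriction of each $\P^i$ to $(w_t)_{t\ge\epsilon}$ really solves the time-$\epsilon$ martingale problem with respect to the \emph{smaller} filtration generated by $(w_t)_{t\ge\epsilon}$ (it does, since an $(\mathcal F_t)$-martingale that is adapted to a sub-filtration is a martingale for that sub-filtration), and that weak uniqueness from Dirac initial data propagates to an arbitrary initial distribution (standard). Your final remark about the constants in~\eqref{condition_2} is not really a proof obligation here: it is part of the hypothesis of Theorem~\ref{6.2.5.2} that Proposition~\ref{6.2.5.1} applies.
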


\begin{remark}
We will not explicitly prove Theorem \ref{6.2.5.2}, since the strategy to follow is exactly the same as in \cite{tomasevic.2}, but rather give a sketch. The linearization strategy used in \cite{tomasevic.2} is very common in the litterature about uniqueness of solutions to non-linear SDEs. More precisely, from the uniqueness given by Proposition \ref{6.2.5.1} of the solution $(\rho_s)_{s\in[0,T]}$ to PDE \eqref{EDP_KS} given by Corollary \ref{6.2.5}, one can define a linearized SDE as
\begin{equation}
\label{EDS_linearisee}
\begin{cases}
&d\tilde{X}_t = \chi \, b_0(t,\tilde{X}_t)dt +  \chi\left( \int_0^t  K_{t-s}\ast p_s(\tilde{X}_t)\,ds\right)dt + dW_t, \quad t>0,\\
& \mathcal{L}(X_0)=\rho_0,\\
\end{cases}
\end{equation}
Then, one should use the so-called "transfer of uniqueness" from \cite{Trevisan} from a linear PDE to a linear martingale problem. This procedure is explained in detail in \cite[p.19]{tomasevic.2}. Once the uniqueness of the linear martingale problem is obtained, since every solution to \eqref{EDS_MKV} is also a solution to \eqref{EDS_linearisee}, one can deduce the uniqueness in Proposition \ref{6.2.5.2}.
\end{remark}

\subsection{Literature and perspectives}
\paragraph{Litterature}~\\
We conclude this section by giving some context to the problem with respect to the literature. We focus on the references that have not been mentioned so far.
 
From the probability side, the study of non-linear Mc-Kean Vlasov processes with irregular coefficients has gained much attention recently, and its study can be split in two categories. On the one hand, singular interacting kernels are usually studied in a case by case basis applying original techniques, see for example Fournier and Jourdain \cite{fournier-jourdain} for a probabilistic approach to the parabolic-elliptic Keller-Segel model in dimension $2$, Osada \cite{Osada85} and Fournier, Hauray and Mischler in \cite{Fournier2014} for $2$-$d$ Navier Stokes equations in vorticity form with a repulsive kernel, or Bossy and Talay \cite{BossyTalay} for Burgers equation in one-dimensional setting.
On the other hand, general formulations of the drift in \eqref{EDS_MKV} are also considered. In this framework, we mention the work of R\"ockner and Zhang in \cite{RocknerZhang} where the assumption is that the interaction kernel belongs to $L^q([0,T],L^p(R^d))$ for $\frac{d}{p}+\frac{2}{q}<1$. The authors show that there is a unique strong solution to the McKean-Vlasov SDE they consider. Our work falls in the first category. Its main originality with respect to the above works is that the interacting kernel keeps memory of the past, and it is worth of noticing that the drift we consider belongs to $L^q([0,T];L^p(\R^d))$ for $\frac{d}{p}+\frac{2}{q}\geq 1$ and as such does not enter the framework of Röckner and Zhang in \cite{RocknerZhang}. 

From the PDE side, we mention the works of Cong and Liu in \cite{CongLiu} and Lemarié-Rieusset in \cite{lemarie--rieusset2013}. On the one hand, in \cite{CongLiu}, the authors are considering a wider class of Keller-Segel equations, where the first equation in \eqref{EDP_KS} is, in their notations, $\partial_t \rho = \Delta \rho^m - \nabla\cdot(\rho\nabla c)$. Moreover, since they normalized the Keller-Segel equation \eqref{EDP_KS} in such a way that they do not consider the parameter $\chi$, the condition found under which the existence of the solutions to \eqref{EDP_KS} can be proven is on the initial density $u_0$, which is like here supposed to be in $L^{d/2}(\R^d)$. Using a regularization procedure, they prove that the behaviour of $\|\rho_t\|_q$ (for $q>\frac{d}{2}$) is of the form $C\left(t^{-\frac{d}{2\epsilon}(\frac{d}{2}+\epsilon-1)(q-1)}+t^{-\frac{d}{2}(q-1)}\right)^{\frac{1}{q}}$, which seems to be less sharp than the one of the form $C\,t^{-(1-\frac{d}{2q})}$ above. On the other hand, in \cite{lemarie--rieusset2013}, the authors defined a well-suited Morrey space $\dot{\mathcal{M}}^1_{d/2}(\R^d)$ and the initial condition norm $\|u_0\|$ in this space is supposed to be smaller than an unspecified constant $\delta_0$. This way, using the contraction principle on this Morrey space, they are able to show the existence of a weak solution to \eqref{EDP_KS}. It is known that $L^{d/2}(\R^d)\subset \dot{\mathcal{M}}^1_{d/2}(\R^d)$ thus, the space is more general than ours but we are able to explicit the bound associated to the initial condition, and also give the time decay of the norms $\|\rho_t\|_q$ (for $q>\frac{d}{2}$) of the solutions to PDE \eqref{EDP_KS}.

\paragraph{Perspectives}~\\
The probabilistic interpretation of the PDE \eqref{EDP_KS} aims to cast new light upon the problem and propose a new way to analyse the system by analysing the associated stochastic process. Once the probabilistic interpretation is validated, it leads to a microscopic description of the phenomenon in question. Namely, if we consider $N\geq 1$ copies of the process $X$ and plug in the place of the law of the process the empirical measure of the positions of these $N$ particles, we arrive to the following interacting particle system: 
$$dX_t^{i,N}= b_0(t,X_t^{i,N}) dt + \frac{\chi}{N} \sum_{j=1}^N \int_0^t K_{t-s}(X_t^{i,N}-X_s^{j,N}) ds \ dt+ dW_t^i, \quad 1\leq i \leq N, t>0. $$
 This particle system is non-Markovian and the interaction is highly singular. The well-posedness of the system and its convergence towards the limiting process (and the PDE) in one dimensional framework was obtained in Jabir, Talay and Tomasevic \cite{JTT}. However, in $d\geq 2$, these questions are still open due to the singular and non-Markovian interaction the particles undergo.\\
 In addition, the microscopic description leads to a numerical approach to discretize the PDE and the behaviour of particle trajectories could explain how singularities are formed and how they interact. 

\paragraph{Plan of the paper}~\\
First in Section \ref{S_Preliminaries} we introduce the regularization procedure we will use throughout this paper and prove uniform estimates with respect to the regularization parameter for the marginal laws. In Subsections \ref{S_MP} and \ref{S_PDE} we respectively prove Theorem \ref{6.2.3} and Corollary \ref{6.2.5}. In Appendix \ref{S_Annexe} one can find some usefull estimates on the gaussian density norm, and a version of the Gronwall Lemma that will be used throughout the paper. Finally in Appendix \ref{S_properties} we expose further properties of the solutions to \eqref{EDP_KS}.

\section{Density estimates for the regularized process}
\label{S_Preliminaries}
In this section we first present the regularized version of the process \eqref{EDS_MKV}. After ensuring its well-posedness, we present the main result (Theorem \ref{6.3.7}) of this section: Uniform (in the regularization parameter) bounds for the marginal laws of the regularized process. The proof is presented in Subsection~\ref{subsec:proof} and some necessary preparation is done in Subsection~\ref{subsec:prel}.

\paragraph{Regularization procedure} Let $\varepsilon >0$, and for $(t,x)\in \R^+\times \R^d$, we regularize the interaction kernel in the following way: 
$$K_t^\varepsilon(x) :=\dfrac{t^{d/2+1}}{\left(t+\varepsilon\right)^{d/2+1}}K_t(x), \qquad g_t^\varepsilon(x):= \dfrac{t^{d/2}}{\left(t+\varepsilon\right)^{d/2}}g_t(x), \qquad b_0^\varepsilon(t,x):= e^{-\lambda t}(\nabla c_0\ast g_t^\varepsilon)(x).$$
Notice that we only regularize the denominator of $K$ and $g$, and not the exponential part. This particular choice will turn out to be very convenient when the limit when $\varepsilon$ goes to $0$ is taken, see Section \ref{Section3}.\\
The regularized version of \eqref{EDS_MKV} is given by 
\begin{equation}
\label{EDS_epsilon}
\begin{cases}
&dX_t^\varepsilon = dW_t + \chi b_0^\varepsilon(t,X_t^\varepsilon)dt + \chi \left( \int_0^t e^{-\lambda(t-s)} \int K^\varepsilon_{t-s}(X^\varepsilon_t-y)p^\varepsilon_s(y)dy\,ds\right)dt \quad t\leq T,\\
& p^\varepsilon_s(y)dy:=\mathcal{L}(X^\varepsilon_s), s\leq T, \qquad X^\varepsilon_0=\rho_0.\\
\end{cases}
\end{equation}
We  denote its drift as
\begin{equation}
\label{eq:driftNotation}
b^\varepsilon(t,x,p^\varepsilon)=b_0^\varepsilon(t,x)+\chi \int_0^t e^{-\lambda(t-s)} \int K^\varepsilon_{t-s}(x-y)p_s^\varepsilon(y)dy\,ds.
\end{equation}

Notice that there exists $C_\varepsilon>0$ such that for $0<t\leq T$  we have for any $(x,y)\in\R^2$
$$|b_0^\varepsilon(t,x)-b_0^\varepsilon(t,y)|+|K_t^\varepsilon(x)-K_t^\varepsilon(y)|\leq C_\varepsilon|x-y|\qquad \text{and}\qquad |b_0^\varepsilon(t,x)|+|K_t^\varepsilon(x)|\leq C_\varepsilon.$$
One should note that $C_\varepsilon\to\infty$ when $\varepsilon\to 0$. Finally, similar computations as those from Lemma \ref{lemme_normale} show that for all $0<t\leq T$ and $1\leq r \leq\infty$
\begin{align}
\label{6.14}
\|K_t^{\varepsilon,i}\|_r\leq \dfrac{C_1(r)}{(t+\varepsilon)^{\frac{d}{2}(1-\frac{1}{r})+\frac{1}{2}}} \qquad \text{and} \qquad \|g_t^\varepsilon\|_r\leq \dfrac{C_0(r)}{(t+\varepsilon)^{\frac{d}{2}(1-\frac{1}{r})}}.
\end{align}
The above inequalities enable us to apply \cite[Thm. A.1]{tomasevic.2} in combination with the arguments of \cite[Prop. 3.9]{tomasevic.2} (which can easily be adapted to our setting) to prove the following proposition:
\begin{proposition}
\label{EDP_mild_epsilon_thm}
Let $T>0, \lambda >0$, $\chi>0$ and $\varepsilon>0$. Assume that $\rho_0$ is a probability density function that belongs to $L^{\frac{d}{2}}(\R^d)$, and suppose that $ c_0\in W^{1,d}(\R^d)$. Then, (\ref{EDS_epsilon}) admits unique strong solution $(X_t^\varepsilon)_{0\leq t\leq T}$. Besides, the one dimension time marginals of this process have densities $(p^\varepsilon_t)_{0\leq t \leq T}$ with respect to the Lebesgue measure. Finally, for $0<t\leq T$,  $p_t^\varepsilon$ satisfies in the sense of the distribution the following mild equation:
\begin{align}
\label{EDP_mild_epsilon}
p_t^\varepsilon = g_t\ast p_0 - \sum_{i=1}^d\int_0^t \nabla_ig_{t-s}\ast(p_s^\varepsilon b^{\varepsilon,i}(s,\cdot,p^\varepsilon))\ ds.
\end{align}
\end{proposition}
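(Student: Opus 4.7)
The plan is to break the statement into three claims and treat each in turn: (a) strong existence and pathwise uniqueness for \eqref{EDS_epsilon}; (b) existence of the marginal densities $p_t^\varepsilon$; and (c) the mild formulation \eqref{EDP_mild_epsilon}. Throughout, $\varepsilon>0$ is fixed, so the bounds $\|b_0^\varepsilon\|_\infty+\|K_t^\varepsilon\|_\infty\le C_\varepsilon$ together with the Lipschitz bounds recalled just before the proposition are available without any smallness assumption.

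For (a), I would run a Picard-type fixed point on the space of probability measures on $\Cc([0,T],\R^d)$ with finite first moment, equipped with a Wasserstein-type metric on path space. Given a candidate law $\Q$ with marginals $(q_s)_{s\le T}$, define an associated bounded measurable drift
\[
\tilde b^{\varepsilon}_{\Q}(t,x):=b_0^\varepsilon(t,x)+\chi\int_0^t e^{-\lambda(t-s)}(K^\varepsilon_{t-s}\ast q_s)(x)\,ds,
\]
solve the classical SDE $dY_t=\tilde b^{\varepsilon}_{\Q}(t,Y_t)dt+dW_t$ with $Y_0\sim\rho_0$, and define $\Phi(\Q):=\mathcal{L}(Y)$. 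The spatial Lipschitz constant of $\tilde b^{\varepsilon}_{\Q}$ is bounded by $C_\varepsilon(1+T)$ uniformly in $\Q$, which together with the linear growth (in fact boundedness) of the coefficients yields the required contraction in the Wasserstein metric $\Ww_{1,[0,T]}$ via a standard Gronwall argument applied to $\E\sup_{s\le t}|Y^{1}_s-Y^{2}_s|$; the memory structure adds a time integral but stays within Gronwall's reach. This is precisely the content of the adaptation of \cite[Prop.\ 3.9]{tomasevic.2} to the memorial kernel. The unique fixed point gives a strong solution of \eqref{EDS_epsilon}.

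For (b), once the solution exists, the drift $(t,x)\mapsto b^\varepsilon(t,x,p^\varepsilon)$ is a \emph{bounded} measurable function. Applying Girsanov's theorem against the pure Brownian dynamics $\tilde X_t=X_0+W_t$ (whose time marginals are absolutely continuous with respect to Lebesgue) yields absolute continuity of $\mathcal{L}(X_t^\varepsilon)$ for every $t>0$, with Radon--Nikodym derivative given by the Girsanov exponential; this is the invocation of \cite[Thm.\ A.1]{tomasevic.2}. Hence the densities $p_t^\varepsilon$ exist.

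For (c), I would apply Itô's formula to $f(X_t^\varepsilon)$ for $f\in C_K^2(\R^d)$ and take expectations. This gives, in the sense of distributions, the forward Fokker--Planck equation
\[
\partial_t p_t^\varepsilon=\tfrac{1}{2}\Delta p_t^\varepsilon-\sum_{i=1}^d \nabla_i\bigl(p_t^\varepsilon\, b^{\varepsilon,i}(t,\cdot,p^\varepsilon)\bigr),\qquad p_0^\varepsilon=\rho_0.
\]
Since $p_t^\varepsilon\, b^{\varepsilon,i}(t,\cdot,p^\varepsilon)\in L^1(\R^d)$ (both factors are controlled: the drift is $L^\infty$ and $p_t^\varepsilon$ is a probability density), Duhamel's formula applied in the sense of distributions with the heat semigroup generated by $\tfrac12\Delta$ produces exactly \eqref{EDP_mild_epsilon}; the derivative is moved onto the Gaussian, which is legitimate against test functions. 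I expect no serious obstacle here, as the regularization has eliminated every singularity of $K$ and $b_0$, so that every integration and exchange of derivative is justified by standard dominated convergence and Fubini. The only care point is double-checking that the Picard contraction argument in step (a) respects the time-memorial structure of the drift, i.e.\ that the mapping $\Q\mapsto \tilde b^{\varepsilon}_{\Q}$ is Lipschitz from $(\Pp,\Ww_{1,[0,T]})$ into the appropriate space of drifts; this is where the pointwise Lipschitz bound on $K^\varepsilon_{t-s}$ and dominated convergence combine cleanly, again for fixed $\varepsilon$.
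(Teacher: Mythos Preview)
Your proposal is correct and aligns with the paper's own treatment: the paper does not give a standalone proof but simply invokes \cite[Thm.~A.1]{tomasevic.2} together with the arguments of \cite[Prop.~3.9]{tomasevic.2}, noting that the bounds \eqref{6.14} and the Lipschitz/boundedness of $K^\varepsilon$ and $b_0^\varepsilon$ make those results directly applicable. Your three-step plan (Picard contraction for strong well-posedness, Girsanov for the existence of densities, It\^o plus Duhamel for the mild equation) is exactly an unpacking of what those two cited results deliver, and you even attribute each step to the same reference the paper does; there is nothing to correct.
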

\paragraph{Density estimates} Let us define the norm in which we prove uniform in $\varepsilon>0$ bounds of the family $(p^\varepsilon_s)_{s\leq T}$.
\begin{mydef}
For $\frac{d}{2}\leq r\leq \infty$ and $p=(p_t)_{t\geq 0}$ a family of probability density functions, we define the quantity
\begin{align}
\label{6.17}
\mathcal{N}^r_t(p):=\sup_{0<s<t} s^{1-\frac{d}{2r}} \|p_s\|_r.
\end{align}
with a slight abuse of notations we will also allow us to write, for $\rho=(\rho_t)_{t\geq 0}$ a family of probability laws,
$$\mathcal{N}^r_t(\rho):=\sup_{0<s<t} s^{1-\frac{d}{2r}} \|\rho_s\|_r.$$
\end{mydef}
The main result of this section is the following theorem:
\begin{theorem}
\label{6.3.7}
Let $T>0$, $\varepsilon>0$. Fix a $q\in(d,2d)$ and assume the same hypothesis as in Theorem \ref{6.2.3}.\\
Then there exists $C_q>0$ independent of $\varepsilon>0$ such that $$\mathcal{N}^q_t(p^\varepsilon)\leq C_q.$$
Here the constant $C_q$ is given by 
$$C_q=\dfrac{1-A\chi\|\nabla c_0\|_d - \sqrt{(1-A\chi\|\nabla c_0\|_d)^2-B^2\chi\|p_0\|_{d/2}}}{2K_1 \chi},$$
where $A$ and $B$ are defined in Theorem \ref{6.2.3} and $K_1$ is defined in \eqref{def_K1}.
\end{theorem}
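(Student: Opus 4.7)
The strategy is to derive a closed quadratic inequality for $x(t):=\mathcal{N}^q_t(p^\varepsilon)$ and then extract the bound via condition \eqref{condition}. All estimates will be uniform in $\varepsilon$ thanks to \eqref{6.14} (the regularized norms being dominated by their un-regularized counterparts), and the hypothesis $q\in(d,2d)$ is exactly what will put the relevant $\beta$-exponents in $(0,1)$.

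Starting from the mild equation \eqref{EDP_mild_epsilon}, I would take the $L^q$-norm. For the heat term, Young's convolution inequality with exponent $\tilde q_2$ (chosen so that $1/\tilde q_2+2/d=1+1/q$) gives $\|g_t\ast p_0\|_q\leq C_0(\tilde q_2)\,t^{-(1-d/(2q))}\|p_0\|_{d/2}$. For each of the $d$ non-linear summands, combining Young using $\|\nabla g_{t-s}\|_{q'}$ with Hölder $\|p_s^\varepsilon b^{\varepsilon,i}\|_{q/2}\leq\|p_s^\varepsilon\|_q\|b^{\varepsilon,i}(s,\cdot,p^\varepsilon)\|_q$ produces $C_1(q')(t-s)^{-(d/(2q)+1/2)}\|p_s^\varepsilon\|_q\|b^{\varepsilon,i}(s,\cdot,p^\varepsilon)\|_q$. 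The crucial preparatory step is to bound the drift in $L^q$ itself: via Young with $\tilde q_1$ (satisfying $1/\tilde q_1+1/d=1+1/q$), the linear part contributes at most $\chi C_0(\tilde q_1)\|\nabla c_0\|_d\,s^{-(1/2-d/(2q))}$, while the non-local part, using $\|K^\varepsilon_{s-u}\|_1\leq C_1(1)(s-u)^{-1/2}$ together with the definition of $\mathcal{N}^q_s(p^\varepsilon)$ and identity \eqref{6.3.2}, contributes $\chi C_1(1)\beta(1-d/(2q),1/2)\,\mathcal{N}^q_s(p^\varepsilon)\,s^{-(1/2-d/(2q))}$.

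Plugging everything back and using $\|p_s^\varepsilon\|_q\leq x(t)\,s^{-(1-d/(2q))}$, the remaining time integral has exponents $d/(2q)+1/2$ and $3/2-d/q$, both in $(0,1)$ precisely because $q\in(d,2d)$, so \eqref{6.3.2} yields a factor $\beta(3/2-d/q,d/(2q)+1/2)\,t^{-(1-d/(2q))}$. Multiplying through by $t^{1-d/(2q)}$ produces the sought quadratic inequality
$$x(t)\leq C_0(\tilde q_2)\|p_0\|_{d/2}+A\chi\|\nabla c_0\|_d\,x(t)+K_1\chi\,x(t)^2,$$
with $A$ as in the statement and $K_1:=dC_1(q')C_1(1)\beta(3/2-d/q,d/(2q)+1/2)\beta(1-d/(2q),1/2)$. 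A direct check gives $B^2=4C_0(\tilde q_2)K_1$, so the discriminant of $P(y):=K_1\chi y^2-(1-A\chi\|\nabla c_0\|_d)y+C_0(\tilde q_2)\|p_0\|_{d/2}$ is exactly $(1-A\chi\|\nabla c_0\|_d)^2-B^2\chi\|p_0\|_{d/2}$, which is positive under \eqref{condition} and has two positive roots $x_-<x_+$.

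The final, and most delicate, step will be to promote $P(x(t))\geq 0$ into $x(t)\leq x_-=C_q$. This requires a continuity/bootstrap argument: one shows that $t\mapsto x(t)$ is continuous (relying on the strong solution provided by Proposition~\ref{EDP_mild_epsilon_thm}, whose density is regular enough in $t$ for fixed $\varepsilon>0$) and that $\limsup_{t\to 0^+}x(t)\leq C_0(\tilde q_2)\|p_0\|_{d/2}$, the non-local part of the mild equation vanishing as $t\to 0$. Under \eqref{condition} one verifies $C_0(\tilde q_2)\|p_0\|_{d/2}<x_-$, so continuity forbids $x(\cdot)$ from ever entering the forbidden interval $(x_-,x_+)$, yielding $x(t)\leq x_-$ on $[0,T]$ uniformly in $\varepsilon$.
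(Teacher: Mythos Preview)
Your proposal is correct and follows essentially the same route as the paper: the same Young/H\"older splitting with exponents $q'$ and $q/2$, the same $L^q$ bound on the drift via $\tilde q_1$ and $\|K^\varepsilon\|_1$, and the same quadratic inequality $P(x(t))\geq 0$. The only minor difference is in the closing continuity argument: the paper invokes its preparatory Lemma~\ref{6.3.6} (which uses boundedness of the regularized drift and a density argument for $g_t\ast p_0$) to get the stronger statement $\lim_{t\to 0}x(t)=0$, so the trajectory starts at the origin and is trapped below $x_-$; your weaker claim $\limsup_{t\to 0^+}x(t)\leq C_0(\tilde q_2)\|p_0\|_{d/2}<x_-$ also works, but note that justifying it still requires an a~priori (possibly $\varepsilon$-dependent) finiteness bound on $x(t)$---exactly what Lemma~\ref{6.3.6} supplies---so you should make that step explicit rather than leave it to ``regular enough in $t$''.
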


A direct corollary of Theorem \ref{6.3.7} allows us to control the drift of the regularized process independently of $\varepsilon$. Namely, one has 

\begin{cor}
\label{6.3.10}
Under the same assumptions as in Theorem~\ref{6.3.7}, for $q\leq r \leq\infty$ we have 
$$\sup_{t\leq T} t^{\frac{1}{2}-\frac{d}{2r}} \|b^\varepsilon(t,\cdot,p^\varepsilon)\|_r\leq  C_r(\chi,\|\nabla c_0\|_d).$$
\end{cor}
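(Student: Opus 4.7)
The plan is to split the drift $b^\varepsilon(t,\cdot,p^\varepsilon) = b_0^\varepsilon(t,\cdot) + \chi\int_0^t e^{-\lambda(t-s)}\,K^\varepsilon_{t-s}\ast p_s^\varepsilon\,ds$ and bound the two pieces in $L^r$ separately, in each case applying Young's convolution inequality together with the Gaussian/derivative-Gaussian estimates \eqref{6.14}. Fix $r\in[q,\infty]$.

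For the linear part, since $\nabla c_0\in L^d(\R^d)$, Young's inequality with exponents $(d, m)$ where $\tfrac{1}{m} = 1 + \tfrac{1}{r} - \tfrac{1}{d}$ (note $m\geq 1$ precisely because $r\geq d$, which holds since $r\geq q>d$) gives
\[
\|b_0^\varepsilon(t,\cdot)\|_r \leq \|\nabla c_0\|_d\,\|g_t^\varepsilon\|_m \leq \frac{C_0(m)\,\|\nabla c_0\|_d}{t^{\frac{d}{2}(1-\frac{1}{m})}} = \frac{C_0(m)\,\|\nabla c_0\|_d}{t^{\frac{1}{2}-\frac{d}{2r}}},
\]
which already yields the desired bound for the linear contribution.

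For the nonlinear part, apply Young with exponents $(a,q)$ where $\tfrac{1}{a}=1+\tfrac{1}{r}-\tfrac{1}{q}$ (again $a\geq 1$ since $r\geq q$). Using \eqref{6.14} and Theorem~\ref{6.3.7}, and dropping $\varepsilon$ in the time denominator,
\[
\|K^\varepsilon_{t-s}\ast p_s^\varepsilon\|_r \leq \|K_{t-s}^\varepsilon\|_a\,\|p_s^\varepsilon\|_q \leq \frac{C_1(a)\,C_q}{(t-s)^{\frac{d}{2q}-\frac{d}{2r}+\frac{1}{2}}\,s^{1-\frac{d}{2q}}}.
\]
Integrating in $s$ via the beta identity \eqref{6.3.2} one obtains a prefactor $t^{-(\frac{1}{2}-\frac{d}{2r})}$, provided both time exponents are strictly less than $1$. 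The exponent $1-\tfrac{d}{2q}$ is $<1$ since $q>d/2$, and the exponent $\tfrac{d}{2q}-\tfrac{d}{2r}+\tfrac{1}{2}$ is bounded above by $\tfrac{d}{2q}+\tfrac{1}{2}$, which is strictly less than $1$ precisely because $q>d$. This is the one place where the condition $q>d$ (and so the range chosen in Theorem \ref{6.3.7}) is used, and it is what covers the endpoint $r=\infty$ cleanly.

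Combining the two pieces yields
\[
t^{\frac{1}{2}-\frac{d}{2r}}\|b^\varepsilon(t,\cdot,p^\varepsilon)\|_r \leq C_0(m)\|\nabla c_0\|_d + \chi\,C_q\,C_1(a)\,\beta\!\left(\tfrac{d}{2q}-\tfrac{d}{2r}+\tfrac{1}{2},\,1-\tfrac{d}{2q}\right),
\]
uniformly in $t\leq T$ and $\varepsilon>0$, with the right-hand side depending only on $\chi$, $\|\nabla c_0\|_d$, $q$, $r$, $d$, and the uniform constant $C_q$ furnished by Theorem~\ref{6.3.7}. I expect no real obstacle: the only subtlety is verifying integrability of the beta kernel uniformly in $r\in[q,\infty]$, which as noted above follows from $q\in(d,2d)$.
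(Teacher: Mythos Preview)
Your proof is correct and rests on the same ingredients as the paper's: split $b^\varepsilon$ into its linear and nonlinear parts, control each by a convolution inequality together with the Gaussian estimates \eqref{6.14} and the uniform density bound from Theorem~\ref{6.3.7}, and check that $q>d$ makes the beta integral converge.

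The only organizational difference is that the paper first treats the two endpoints $r=q$ (via Lemma~\ref{lemme_drift}, i.e.\ Young with $\|K_{t-s}\|_1$) and $r=\infty$ (via H\"older with $\|K_{t-s}\|_{q'}$), and then interpolates for intermediate $r$. You instead run Young's inequality with exponent $a$ depending continuously on $r$, which covers all $r\in[q,\infty]$ in one stroke; the paper's endpoint choices are exactly your $a=1$ and $a=q'$. Your packaging is slightly more streamlined since it avoids the interpolation step, but the underlying computation is identical.
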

The proof is postponed to Subsection \ref{subsec:prel}

\subsection{Preliminary lemmas}
\label{subsec:prel}
First, we start with the computations that will be frequently used.

\begin{lemma}
\label{lemme_drift}
Let $\lambda\geq 0,  t>0$ and $\frac{d}{2}\leq r\leq\infty$. Then, for $\varepsilon>0$ and $1\leq i \leq d$,  one has that
$$\Big\|\int_0^t e^{-\lambda(t-s)}(K^i_{t-s}\ast p_s^\varepsilon)(\cdot)\ ds\Big \|_r \leq C_1(1) \mathcal{N}_t^r(p^\varepsilon) \beta\left(1-\frac{d}{2r},\frac{1}{2}\right)\frac{1}{t^{\frac{1}{2}-\frac{d}{2r}}}.
$$
\end{lemma}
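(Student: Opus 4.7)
The plan is to reduce the task to a scalar beta integral by pulling the $L^r$-norm inside the time integral via Minkowski and then applying Young's convolution inequality. Specifically, I would first write
\[
\Big\|\int_0^t e^{-\lambda(t-s)}(K^i_{t-s}\ast p_s^\varepsilon)\,ds\Big\|_r \;\leq\; \int_0^t e^{-\lambda(t-s)}\,\|K^i_{t-s}\ast p_s^\varepsilon\|_r\,ds,
\]
and then, since $L^1 \ast L^r \hookrightarrow L^r$ with constant one,
\[
\|K^i_{t-s}\ast p_s^\varepsilon\|_r \;\leq\; \|K^i_{t-s}\|_1\,\|p_s^\varepsilon\|_r.
\]

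Next I would use the two ingredients provided by the preliminaries. Since $K_u = e^{-\lambda u}\nabla g_u$, Lemma \ref{lemme_normale} yields
\[
\|K^i_{u}\|_1 \;=\; e^{-\lambda u}\,\|\nabla_i g_{u}\|_1 \;=\; e^{-\lambda u}\,\frac{C_1(1)}{u^{1/2}},
\]
while the very definition of $\mathcal{N}^r_t$ gives $\|p_s^\varepsilon\|_r \leq \mathcal{N}^r_t(p^\varepsilon)\, s^{-(1-\frac{d}{2r})}$ for every $0<s<t$. Plugging these in and bounding the resulting $e^{-2\lambda(t-s)}$ by $1$, I obtain
\[
\Big\|\int_0^t e^{-\lambda(t-s)}(K^i_{t-s}\ast p_s^\varepsilon)\,ds\Big\|_r \;\leq\; C_1(1)\,\mathcal{N}^r_t(p^\varepsilon)\int_0^t \frac{1}{(t-s)^{1/2}\,s^{1-\frac{d}{2r}}}\,ds.
\]

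Finally I would invoke the beta-integral identity \eqref{6.3.2} with $a=1-\tfrac{d}{2r}$ and $b=\tfrac{1}{2}$, which gives
\[
\int_0^t \frac{ds}{(t-s)^{1/2}\,s^{1-\frac{d}{2r}}} \;=\; t^{\frac{d}{2r}-\frac{1}{2}}\,\beta\!\left(1-\tfrac{d}{2r},\,\tfrac{1}{2}\right),
\]
and putting everything together yields exactly the claimed bound. There is no real obstacle here: the whole argument is a direct concatenation of Minkowski, Young, the explicit Gaussian norm, and the beta identity. The only thing to watch is the hypothesis $r\geq d/2$, which ensures $1-\tfrac{d}{2r}\in[0,1)$ so that the beta integral is finite (the borderline $r=d/2$ reduces to $\beta(0,\tfrac{1}{2})=2$, which is still harmless).
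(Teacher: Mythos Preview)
Your proof is correct and follows essentially the same route as the paper: Minkowski (the paper calls it the triangular inequality), then Young's convolution inequality $\|K^i_{t-s}\ast p^\varepsilon_s\|_r\le\|K^i_{t-s}\|_1\|p^\varepsilon_s\|_r$, then Lemma~\ref{lemme_normale} together with the definition of $\mathcal N^r_t$, and finally the beta identity~\eqref{6.3.2}. Your bookkeeping of the exponential factor (yielding $e^{-2\lambda(t-s)}\le 1$) is in fact slightly more careful than the paper's, which simply drops both factors in one stroke.
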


\begin{proof}
Use the triangular inequality and the convolution inequality. It comes
\begin{align*}
\left\|\int_0^t e^{-\lambda(t-s)}(K^i_{t-s}\ast p^\varepsilon_s)(\cdot)ds\right\|_r \leq \int_0^t \left\| K^i_{t-s}\ast p^\varepsilon_s  \right\|_r ds\leq \int_0^t\| K^i_{t-s}\|_1 \|p^\varepsilon_s\|_r ds.
\end{align*}  
In view of the definition of $\mathcal{N}_t^r(p^\varepsilon)$ and Lemma \ref{lemme_normale} we have
\begin{align*}
\left\|\int_0^t e^{-\lambda(t-s)}(K^i_{t-s}\ast p^\varepsilon)(\cdot)\ ds\right\|_r \leq \mathcal{N}_t^r(p^\varepsilon) \int_0^t\frac{C_1(1)}{\sqrt{t-s}} \frac{1}{s^{1-\frac{d}{2r}}} ds.
\end{align*} 
The desired results follows applying the equality in (\ref{6.3.2}).
\end{proof}

The following result on the linear part of the drift and its regularized version follows from \cite{brezis}[Ex. 4.30], and some simple computations using a convolution inequality.
\begin{lemma}
\label{6.3.3}
Let $t>0$ and $\varepsilon >0$. Then, the function $b^i_0(t,\cdot)$ is continuous on $\R^d$ and for $ d\leq r\leq \infty$ one has
$$\chi\|b^i_0(t,\cdot)\|_r \leq \chi \|\nabla c_0\|_d \frac{C_0\left(\frac{dr}{(d-1)r+d}\right)}{t^{\frac{1}{2}-\frac{d}{2r}}}.$$
Moreover, it holds 
\begin{equation}
\label{6.3.4}
\chi\|b^{\varepsilon,i}_0(t,\cdot)\|_r \leq \chi \|\nabla c_0\|_d \frac{C_0\left(\frac{dr}{(d-1)r+d}\right)}{(t+\varepsilon)^{\frac{1}{2}-\frac{d}{2r}}}.
\end{equation}
\end{lemma}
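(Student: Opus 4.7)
The statement consists of three assertions about the linear drift term $b_0(t,x) = e^{-\lambda t}(g_t \ast \nabla c_0)(x)$ and its regularization: continuity in $x$, an $L^r$ bound, and the analogous bound for the regularized version. Each part reduces to routine Young convolution estimates combined with the Gaussian norm bounds already recorded in the paper, so my plan is to execute them in sequence.

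\textbf{Continuity.} Since $\nabla c_0 \in L^d(\R^d)$ and $g_t \in L^{d/(d-1)}(\R^d)$ (indeed $g_t \in L^p$ for every $p \geq 1$ as soon as $t > 0$), the exponents $d$ and $d/(d-1)$ are conjugate. By the cited Exercise 4.30 in Brezis, the convolution $g_t \ast \partial_i c_0$ is continuous on $\R^d$ (in fact vanishing at infinity), hence so is $b_0^i(t,\cdot) = e^{-\lambda t} g_t \ast \partial_i c_0$.

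\textbf{Bound on $b_0^i$.} For $d \leq r \leq \infty$, choose $p$ by the Young relation $1 + \tfrac{1}{r} = \tfrac{1}{p} + \tfrac{1}{d}$, which gives
\begin{equation*}
p \;=\; \frac{dr}{(d-1)r+d}.
\end{equation*}
Young's convolution inequality applied to $b_0^i(t,\cdot) = e^{-\lambda t}\, g_t \ast \partial_i c_0$ yields
\begin{equation*}
\|b_0^i(t,\cdot)\|_r \;\leq\; e^{-\lambda t}\, \|g_t\|_p\, \|\partial_i c_0\|_d \;\leq\; \|g_t\|_p\, \|\nabla c_0\|_d.
\end{equation*}
By Lemma \ref{lemme_normale}, $\|g_t\|_p = C_0(p)\, t^{-\frac{d}{2}(1-1/p)}$, and the simple computation
\begin{equation*}
\frac{d}{2}\Bigl(1 - \frac{1}{p}\Bigr) \;=\; \frac{d}{2}\Bigl(\frac{1}{d} - \frac{1}{r}\Bigr) + 0 \cdot \text{stuff} \;=\; \frac{1}{2} - \frac{d}{2r}
\end{equation*}
(using $\tfrac{1}{p} = 1 + \tfrac{1}{r} - \tfrac{1}{d}$) provides the stated time exponent. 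Multiplying by $\chi$ gives the first estimate.

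\textbf{Bound on $b_0^{\varepsilon,i}$.} The argument is identical, except that one invokes the regularized Gaussian bound \eqref{6.14}, namely $\|g_t^\varepsilon\|_p \leq C_0(p) (t+\varepsilon)^{-\frac{d}{2}(1-1/p)}$, in place of Lemma \ref{lemme_normale}. I do not expect any genuine obstacle here: the only mild point to verify is that the Young exponent $p = dr/((d-1)r+d)$ lies in $[1,\infty]$ for every $r \in [d,\infty]$ (it does: $p \in [1, d/(d-1)]$), so all convolution inequalities apply without restriction, and the $d \leq r$ hypothesis is exactly what guarantees $p \geq 1$.
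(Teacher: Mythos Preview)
Your proof is correct and follows exactly the approach the paper indicates: continuity from Brezis Exercise~4.30 (convolution of conjugate-exponent functions) and the $L^r$ bounds from Young's convolution inequality combined with the Gaussian norm formulas of Lemma~\ref{lemme_normale} and \eqref{6.14}. The paper itself offers no further detail, so your write-up is a faithful expansion of its one-line justification.
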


The next result guarantees that, for a fixed $\varepsilon>0$, the norms we are interested in are well defined. Moreover, it ensures the  continuity of $\mathcal{N}_t^r(p^\varepsilon)$ when $t\to 0$. This result is the adaptation of \cite[Lemma 3.10]{tomasevic.2} to the multidimensional framework we are in. We prefer to postpone its proof to Appendix \ref{proof_6.3.6}.

\begin{lemma}
\label{6.3.6}
Let $0<t\leq T$, $\frac{d}{2}<r<\infty$ and $\varepsilon>0$ fixed. Then it exists $C(\varepsilon,T,\chi)>$ such that 
\begin{align}
\label{6.18}
\mathcal{N}_t^r(p^\varepsilon)\leq C(\varepsilon,T,\chi),
\end{align}
and moreover
\begin{align}
\label{6.19}
\lim_{t\to 0} \mathcal{N}_t^r(p^\varepsilon)=0.
\end{align}
\end{lemma}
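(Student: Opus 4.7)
The plan is to exploit the fact that, for fixed $\varepsilon>0$, the regularization makes the drift $b^\varepsilon(t,x,p^\varepsilon)$ uniformly bounded on $[0,T]\times\R^d$. From $|b_0^\varepsilon(t,\cdot)|+|K_t^\varepsilon|\leq C_\varepsilon$ and $\int p_s^\varepsilon=1$ one readily obtains $\|b^\varepsilon(t,\cdot,p^\varepsilon)\|_\infty\leq M_\varepsilon:=C_\varepsilon(1+\chi T)$. I would then take $L^r$-norms in the mild equation \eqref{EDP_mild_epsilon} and apply Young's convolution inequality term-by-term: for the non-linear integrand, using $\|\nabla_i g_{t-s}\|_1=C_1(1)/\sqrt{t-s}$ and the $M_\varepsilon$-bound to pull the drift out; for the free term, using Young with $1/a=1-2/d+1/r\in(0,1)$ (valid since $r>d/2$ and $d\geq 3$) to obtain $\|g_t\ast p_0\|_r\leq C_0(a)\|p_0\|_{d/2}/t^{1-d/(2r)}$.

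Multiplying by $t^{1-d/(2r)}$, factorizing $\|p_s^\varepsilon\|_r=s^{-(1-d/(2r))}\cdot s^{1-d/(2r)}\|p_s^\varepsilon\|_r$ inside the Duhamel integral, invoking the beta identity \eqref{6.3.2}, and taking the supremum over $s\in(0,t]$ (by monotonicity of $t\mapsto\mathcal{N}_t^r(p^\varepsilon)$), I arrive at the self-absorbing inequality
\begin{equation*}
\mathcal{N}_t^r(p^\varepsilon)\leq C_0(a)\|p_0\|_{d/2}+dC_1(1)M_\varepsilon\,\beta\Bigl(1-\tfrac{d}{2r},\tfrac12\Bigr)\,t^{1/2}\,\mathcal{N}_t^r(p^\varepsilon).
\end{equation*}
For $t\leq t_0^\varepsilon$ small enough that the coefficient multiplying $\mathcal{N}_t^r(p^\varepsilon)$ on the right is at most $1/2$, this absorbs the self-reference and yields $\mathcal{N}_t^r(p^\varepsilon)\leq 2C_0(a)\|p_0\|_{d/2}$. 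To cover all of $[0,T]$, I would iterate this on successive intervals of length $t_0^\varepsilon$: using the shifted mild equation with initial data $p_{kt_0^\varepsilon}^\varepsilon\in L^r$ (whose norm is controlled by the previous step) and invoking the singular Gronwall lemma from Appendix \ref{S_Annexe}, one propagates a finite bound across each step and concludes \eqref{6.18}.

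For \eqref{6.19}, the $t^{1/2}$ factor in the displayed inequality already vanishes as $t\to 0$, so it only remains to show $\sup_{s\leq t}s^{1-d/(2r)}\|g_s\ast p_0\|_r\to 0$. I would do this via a density argument: approximate $p_0$ in the $L^{d/2}$-norm by $p_0^n\in L^{d/2}\cap L^r$. Then $s^{1-d/(2r)}\|g_s\ast p_0^n\|_r\leq s^{1-d/(2r)}\|p_0^n\|_r\to 0$ as $s\to 0$, while $s^{1-d/(2r)}\|g_s\ast(p_0-p_0^n)\|_r\leq C_0(a)\|p_0-p_0^n\|_{d/2}$ is uniformly small for large $n$. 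Plugging these into the absorbing inequality yields $\mathcal{N}_t^r(p^\varepsilon)\to 0$.

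The main technical obstacle I anticipate is the iteration in the second paragraph: although each $t_0^\varepsilon$-interval is a routine Gronwall application thanks to the uniform bound $M_\varepsilon$, the drift $b^\varepsilon(s,\cdot,p^\varepsilon)$ is history-dependent through the whole integral $\int_0^s K^\varepsilon_{s-u}\ast p_u^\varepsilon\,du$, so one must carefully verify that the $L^r$-norms $\|p_{kt_0^\varepsilon}^\varepsilon\|_r$ along the iterations stay finite, at the price of a (possibly large, $\varepsilon$-dependent) constant $C(\varepsilon,T,\chi)$.
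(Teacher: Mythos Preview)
Your proposal is correct and shares the paper's two key ingredients: the uniform $L^\infty$-bound $M_\varepsilon$ on the regularized drift, and the density argument in $L^{d/2}$ for the free term (which the paper states separately as Lemma~\ref{6.3.5}). Where you diverge is in how you close the estimate on all of $[0,T]$. You stop one step too early by passing immediately to the sup and the beta integral, obtaining the self-absorbing inequality with the factor $t^{1/2}$; this forces you into the small-time absorption followed by an interval-by-interval iteration, which is exactly the ``main technical obstacle'' you flag (and which is indeed awkward because of the history dependence of the drift).

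The paper avoids this entirely. It keeps the integral form
\[
t^{1-\frac{d}{2r}}\|p_t^\varepsilon\|_r \leq t^{1-\frac{d}{2r}}\|g_t\ast p_0\|_r + C(\varepsilon,T,\chi)\,t^{1-\frac{d}{2r}}\int_0^t \frac{s^{1-\frac{d}{2r}}\|p_s^\varepsilon\|_r}{\sqrt{t-s}\,s^{1-\frac{d}{2r}}}\,ds
\]
and applies the special Gronwall lemma (Lemma~\ref{lemme_Tomasevic}) with $u(t)=t^{1-\frac{d}{2r}}\|p_t^\varepsilon\|_r$ and $\alpha=1-\frac{d}{2r}$ directly on the whole interval $[0,T]$. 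That lemma is designed precisely for this shape of inequality (one iteration of the kernel turns the $\sqrt{t-s}$ into an integrable singularity, after which classical Gronwall applies), so no shifting of the mild equation or interval subdivision is needed. Your route works, but the paper's is shorter and sidesteps the obstacle you were worried about.
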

We finish this section with the postponed proof of Corollary \ref{6.3.10}.
\begin{proof}[Proof of Corollary \ref{6.3.10} ]
Take $q\in (d, 2d)$ fixed in Theorem \ref{6.3.7}. In view of \eqref{6.3.4} and  Lemma \ref{lemme_drift} for $r=q$, one has 
$$\|b^\varepsilon(t,\cdot,p^\varepsilon)\|_q\leq \dfrac{C_q(\chi,\|\nabla c_0\|_d)}{t^{\frac{1}{2}-\frac{d}{2q}}} + C_q(\chi) \mathcal{N}_t^q(p^\varepsilon) \frac{1}{t^{\frac{1}{2}-\frac{d}{2q}}}.$$
According to Theorem \ref{6.3.7}, $\mathcal{N}_t^q(p^\varepsilon) $ is uniformly bounded w.r.t. $\varepsilon$. Hence, the desired estimate holds for $r=q$.

Now, let $r=\infty$. Apply \eqref{6.3.4} for $r=\infty$ and H\"older's inequality. It comes
$$\|b^\varepsilon(t,\cdot,p^\varepsilon)\|_\infty\leq \dfrac{C_q(\chi,\|\nabla c_0\|_d)}{\sqrt{t}} + C \mathcal{N}_t^q(p^\varepsilon) \int_0^t \|K_{t-s}\|_{q'} \frac{1}{s^{1-\frac{d}{2q}}}  \ ds.$$
In view of Lemma \ref{lemme_normale} and Theorem  \ref{6.3.7}, one has 
$$\|b^\varepsilon(t,\cdot,p^\varepsilon)\|_\infty\leq \dfrac{C_q(\chi,\|\nabla c_0\|_d)}{\sqrt{t}} + C  \int_0^t  \frac{1}{(t-s)^{\frac{d}{2}\left(1-\frac{1}{q'}\right)+\frac{1}{2}}s^{1-\frac{d}{2q}}}  \ ds.$$
The last integral in the above expression is well defined as $q>d$ and hence $\frac{d}{2q}<\frac{1}{2}$. Therefore, in view of (\ref{6.3.2}) one has 
$$\|b^\varepsilon(t,\cdot,p^\varepsilon)\|_\infty\leq \dfrac{C_q(\chi,\|\nabla c_0\|_d)}{\sqrt{t}}.$$
Now, to obtain the desired estimate for any $r\in (q,\infty)$, it suffices to apply the interpolation inequality.
\end{proof}

\subsection{Proof of Theorem~\ref{6.3.7}}
\label{subsec:proof}
Let $q'>1$ such that $\frac{1}{q}+\frac{1}{q'}=1$. Integrate  (\ref{EDP_mild_epsilon}) with respect to a function $f\in L^{q'}(\R^d)$ and apply Hölder inequality. It comes
\begin{align}
\label{6.23}
\left|\int p^\varepsilon_t(x)f(x)dx\right|\leq \|f\|_{q'}\left(\|g_t\ast p_0\|_q + \sum_{i=1}^d \int_0^t \|\nabla_i g_{t-s}\ast(p^\varepsilon_s b^{\varepsilon,i}_s)\|_qds\right).
\end{align}
We denote $A_s^i:=\|\nabla_i g_{t-s}\ast(p^\varepsilon_sb^{\varepsilon,i}_s)\|_q$ and observe that $\frac{1}{q'}+\frac{2}{q}=1+\frac{1}{q}$. Thus, applying a convolution inequality, Lemma \ref{lemme_normale} and  Cauchy-Schwartz inequality, one gets
\begin{equation}
\label{eq:mainth1}
A^i_s\leq \|\nabla_i g_{t-s}\|_{q'}\|p^\varepsilon_sb^{\varepsilon,i}_s\|_{q/2}\leq \dfrac{C_1(q')\|b^{\varepsilon,i}_s\|_qs^{1-\frac{d}{2q}}\|p_s^\varepsilon\|_q}{(t-s)^{\frac{d}{2}(1-\frac{1}{q'})+\frac{1}{2}}s^{1-\frac{d}{2q}}}\leq C_1(q')\mathcal{N}_t^q(p^\varepsilon)\dfrac{\|b^{\varepsilon,i}_s\|_q}{(t-s)^{\frac{d}{2}(1-\frac{1}{q'})+\frac{1}{2}}s^{1-\frac{d}{2q}}}.
\end{equation}
In view of Lemma \ref{6.3.3}
and Lemma \ref{lemme_drift}, it comes
\begin{align*}
\|b^{\varepsilon,i}_s\|_q &\leq 
 \dfrac{C_0\left(\frac{dq}{(d-1)q+d}\right)\chi\|\nabla c_0\|_d +  \chi C_1(1)\mathcal{N}_t^q(p^\varepsilon)\beta\left(1-\frac{d}{2q},\frac{1}{2}\right)}{s^{\frac{1}{2}-\frac{d}{2q}}}.
\end{align*}
Denoting $\tilde{q}_1=\frac{dq}{(d-1)q+d}$ and plugging the latter in \eqref{eq:mainth1}, one has
$$A^i_s \leq \chi C_1(q')\mathcal{N}_t^q(p^\varepsilon)\dfrac{C_0\left(\tilde{q}_1\right)\|\nabla c_0\|_d +  C_1(1)\mathcal{N}_t^q(p^\varepsilon)\beta\left(1-\frac{d}{2q},\frac{1}{2}\right)}{(t-s)^{\frac{d}{2}(1-\frac{1}{q'})+\frac{1}{2}}s^{\frac{3}{2}-\frac{d}{q}}}.$$
Plug the previous inequality in (\ref{6.23}). Now, the fact that $q\in(d,2d)$ ensures that we can apply (\ref{6.3.2}) (as $\frac{3}{2}-\frac{d}{q}<1$ and $\frac{d}{2}(1-\frac{1}{q'})+\frac{1}{2}<1$). Thus, we get
\begin{align*}
\left|\int p^\varepsilon_t(x)f(x)dx\right| & \leq \|f\|_{q'}\Big(\|g_t\ast p_0\|_q \\
& \left. + d\chi C_1(q')\mathcal{N}_t^q(p^\varepsilon)\dfrac{C_0\left(\tilde{q}_1\right)\|\nabla c_0\|_d +  C_1(1)\mathcal{N}_t^q(p^\varepsilon)\beta\left(1-\frac{d}{2q},\frac{1}{2}\right)}{t^{1-\frac{d}{2q}}}\beta\left(\frac{3}{2}-\frac{d}{q},\frac{d}{2}(1-\frac{1}{q'})+\frac{1}{2}\right)\right),
\end{align*}
Notice that  $\|g_t\ast p_0\|_q \leq \|g_t\|_{\tilde{q}_2}\|p_0\|_{\frac{d}{2}}$ with $\tilde{q}_2 = \frac{dq}{d+(d-2)q}$. Then, by Riesz representation theorem, we get
$$\|p^\varepsilon_t\|_q \leq \dfrac{C_0(\tilde{q}_2)}{t^{1-\frac{d}{2q}}}\|p_0\|_{\frac{d}{2}}+d\chi C_1(q') \beta\left(\frac{3}{2}-\frac{d}{q},\frac{d}{2}(1-\frac{1}{q'})+\frac{1}{2}\right)\mathcal{N}_t^q(p^\varepsilon)\dfrac{C_0\left(\tilde{q}_1\right)\|\nabla c_0\|_d+C_1(1)\mathcal{N}_t^q(p^\varepsilon)\beta\left(1-\frac{d}{2q},\frac{1}{2}\right)}{t^{1-\frac{d}{2q}}}.$$
Denote
\begin{align}
\label{def_K1}
& K_1:=dC_1(q')C_1(1)\beta\left(\frac{3}{2}-\frac{d}{q},\frac{d}{2}(1-\frac{1}{q'})+\frac{1}{2}\right)\beta\left(1-\dfrac{d}{2q},\dfrac{1}{2}\right),\\
&\nonumber K_2:=dC_1(q')C_0\left(\tilde{q}_1\right)\beta\left(\frac{3}{2}-\frac{d}{q},\frac{d}{2}(1-\frac{1}{q'})+\frac{1}{2}\right).
\end{align}
By the definition of $\mathcal{N}_t^q(p^\varepsilon)$ we have, after rearranging the terms
\begin{align}
\label{6.24}
0\leq K_1 \chi\, \mathcal{N}_t^q(p^\varepsilon)^2 + (K_2\chi\|\nabla c_0\|_d-1)\mathcal{N}_t^q(p^\varepsilon)+C_0(\tilde{q}_2)\|p_0\|_{\frac{d}{2}}
\end{align}
Now, we define the polynomial
$$P(z):=K_1 \chi\,z^2 + (K_2\chi\|\nabla c_0\|_d-1)z+C_0(\tilde{q}_2)\|p_0\|_{\frac{d}{2}}.$$
Under the condition 
$$\Delta:=(K_2\chi\|\nabla c_0\|_d-1)^2-4K_1C_0(\tilde{q}_2)\|p_0\|_{\frac{d}{2}}\chi>0,$$
$P$ admits two roots, and so under the assumption
$$ K_2\chi\|\nabla c_0\|_d-1<0,$$
the roots are both positive.

Now, remember that from Lemma \ref{6.3.6} we have that $\lim_{t\to 0}\mathcal{N}_t^q(p^\varepsilon)=0$ and from (\ref{6.24}) we have  ${P(\mathcal{N}_t^q(p^\varepsilon))>0}$ for all $t\in[0,T]$. Moreover, in view of (\ref{EDP_mild_epsilon}) the application $t\to \mathcal{N}_t^q(p^\varepsilon)$ is continuous. Then, necessarily $\mathcal{N}_t^q(p^\varepsilon)$ is bounded by the smallest root of $P$ for any $t\in[0,T]$. Since this root does not depend neither of $t$ nor of  $\varepsilon$, this bound is uniform in $\varepsilon$.\\
To conclude, note that the two conditions on the polynomial are equivalents to the condition
$$K_2\chi\|\nabla c_0\|_d + 2\sqrt{K_1 C_0(\tilde{q}_2)\|p_0\|_{\frac{d}{2}}\chi}<1.$$
Denote $A:=K_2$ and $B:=2\sqrt{C_0(\tilde{q}_2)K_1}$ to finish the proof.

\section{Proofs of the main results}
\label{Section3}

\subsection{Martingale problem}
\label{S_MP}
In this subsection we first  obtain the tightness in $\varepsilon\in (0,1)$ of the laws of the solutions to (\ref{EDS_epsilon}). Then, we  prove that the linear and non-linear parts of the drift converge to their respective limits as $\varepsilon \rightarrow 0$. Finally, we give the proof of Theorem~\ref{6.2.3}.
\begin{proposition}
\label{prop_tightness}
Let $T>0$ and $\varepsilon_k=\frac{1}{k}$ for $k\in\N^\ast$. Denote by  $\mathbb{P}^k$ the law of the solution of the SDE~\eqref{EDS_epsilon} associated to $\varepsilon_k$. Then, under the same hypothesis as in Theorem \ref{6.2.3}, the family $(\mathbb{P}^k)_{k\in\N^\ast}$ is tight in $C([0,T],\R^d)$ w.r.t. $k\in\N^\ast$.
\end{proposition}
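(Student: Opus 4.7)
The plan is to apply the Kolmogorov tightness criterion on $C([0,T], \R^d)$. Since the initial marginals all coincide with $\rho_0$, the family is trivially tight at time zero, so the main task is to establish a moment bound on increments of the form $\mathbb{E}|X_t^{\varepsilon_k} - X_s^{\varepsilon_k}|^\alpha \leq C|t-s|^{1+\beta}$ with constants independent of $k$.

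From the integral form of \eqref{EDS_epsilon} I would write
$$X_t^{\varepsilon_k} - X_s^{\varepsilon_k} = (W_t - W_s) + \chi \int_s^t b^{\varepsilon_k}(u, X_u^{\varepsilon_k}, p^{\varepsilon_k}) \, du,$$
so that $(a+b)^4 \leq 8(a^4+b^4)$ leaves two contributions to control. The Brownian increment gives the standard $\mathbb{E}|W_t - W_s|^4 = C(t-s)^2$. For the drift term I would combine the pointwise bound $|b^{\varepsilon_k}(u, X_u^{\varepsilon_k}, p^{\varepsilon_k})| \leq \|b^{\varepsilon_k}(u, \cdot, p^{\varepsilon_k})\|_\infty$ with Corollary \ref{6.3.10} applied at $r = \infty$, which yields $\|b^{\varepsilon_k}(u, \cdot, p^{\varepsilon_k})\|_\infty \leq C_\infty / \sqrt{u}$ uniformly in $k$. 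Integrating gives $\int_s^t u^{-1/2} \, du = 2(\sqrt{t} - \sqrt{s}) \leq 2\sqrt{t-s}$, which after raising to the fourth power bounds the drift contribution by $C\chi^4 (t-s)^2$.

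Combining both contributions produces $\mathbb{E}|X_t^{\varepsilon_k} - X_s^{\varepsilon_k}|^4 \leq C(t-s)^2$ uniformly in $k$, and Kolmogorov's tightness criterion with $\alpha = 4$, $\beta = 1$ then yields the tightness of $(\mathbb{P}^k)_{k \geq 1}$ in $C([0,T], \R^d)$. The only delicate step is the integrable singularity $u^{-1/2}$ at zero inherited from the drift estimate; the elementary inequality $\sqrt{t}-\sqrt{s} \leq \sqrt{t-s}$ absorbs it cleanly into a $(t-s)^{1/2}$ factor, and beyond that the argument reduces to standard bookkeeping. The real work has already been done in Theorem \ref{6.3.7} and Corollary \ref{6.3.10}, which provide the $\varepsilon$-uniform drift bound that makes this tightness argument possible at all.
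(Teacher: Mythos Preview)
Your proof is correct and follows essentially the same route as the paper: decompose the increment into Brownian and drift parts, invoke Corollary~\ref{6.3.10} with $r=\infty$ to bound the drift by $C/\sqrt{u}$ uniformly in $k$, integrate and use $\sqrt{t}-\sqrt{s}\leq\sqrt{t-s}$, then apply Kolmogorov's criterion. The only cosmetic difference is that the paper states the moment bound for a general exponent $m>2$ rather than fixing $m=4$.
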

\begin{proof}
For $m>2$ and $0<s<t\leq T$ we compute,
$$\E|X_t^{\varepsilon_k}-X_s^{\varepsilon_k}|^m \leq C \E\left( \sum_{i=1}^d \left(\int_s^tb^{\varepsilon_k,i}(u,X_u^{\varepsilon_k})du\right)^2\right)^{\frac{m}{2}} +C \E|W_t-W_s|^m.$$
Applying Corollary \ref{6.3.10} with $r=\infty$, we get
$$\E|X_t^{\varepsilon_k}-X_s^{\varepsilon_k}|^m \leq C \left(\int_s^t \dfrac{C_\infty(\chi,\|\nabla c_0\|_2)}{\sqrt{u}}du\right)^m + C(t-s)^{\frac{m}{2}} \leq C (t-s)^{\frac{m}{2}}.$$
Finally, Kolmogorov's criterion (see \cite[Chap.~2, Pb.~4.11]{KaratzasShreve}) implies the desired result. 
\end{proof}

By Proposition \ref{prop_tightness}, one can extract a weakly convergent subsequence from $(\P^k)_{k\geq 1}$. By a slight abuse of notation, we still denote this convergent subsequence by $(\P^k)_{k\geq 1}$ and we denote its limit by $\P^\infty$. Moreover, we recall that by Proposition~\ref{EDP_mild_epsilon_thm} the one dimensional time marginals of $\P^k$ admit densities $p^k_t$. Before passing to the proof of Theorem~\ref{6.2.3}, we prove the following auxilary lemma:
\begin{lemma}
\label{lemme_mainthm}
Let $u>0$. Then, we have $$\lim_{k\rightarrow \infty}\|b_0^{\varepsilon_k,i}(u,\cdot)-b_0^i(u,\cdot)\|_\infty = 0.$$ 
Moreover, supposing that the marginals  $(\P_t^\infty)_{t\geq 0}$ admit probability density functions $(p^\infty_t)_{t\geq 0}$ verifying \eqref{dens_Est_d}
for $q\geq 1$ fixed in Theorem~\ref{6.3.7}, we have that for any $z\in \R^d$
\begin{align}
\label{lemme_mainthm_2}
\lim_{k\to \infty}\left|\int_0^u K^{i,\varepsilon_k}_{u-\tau}\ast p^k_\tau(z)d\tau-\int_0^uK^i_{u-\tau}\ast p^\infty_\tau(z)d\tau\right|=0.
\end{align} 
\end{lemma}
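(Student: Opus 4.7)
The plan is to treat the two statements separately; the first follows directly from the explicit form of the regularisation, while the second requires isolating the singularity of $K^i_{u-\tau}$ near $\tau=u$ and then exploiting the weak convergence $\P^k \to \P^\infty$ away from it.

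\textbf{Linear part.} Writing $g_u^{\varepsilon_k} - g_u = \bigl(\tfrac{u^{d/2}}{(u+\varepsilon_k)^{d/2}} - 1\bigr)\, g_u$ and using Young's convolution inequality with $\partial_i c_0 \in L^d(\R^d)$ (which follows from $c_0 \in W^{1,d}(\R^d)$) and $g_u \in L^{d/(d-1)}(\R^d)$, I would bound
\[
\|b_0^{\varepsilon_k,i}(u,\cdot) - b_0^i(u,\cdot)\|_\infty \;\le\; e^{-\lambda u}\,\Bigl|\tfrac{u^{d/2}}{(u+\varepsilon_k)^{d/2}} - 1\Bigr|\,\|\partial_i c_0\|_d\,\|g_u\|_{d/(d-1)},
\]
and the prefactor vanishes as $\varepsilon_k \to 0$, giving the claim.

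\textbf{Non-linear part.} I would split the difference as $I_k = J_k + L_k$, where
\[
J_k := \int_0^u \bigl(K^{i,\varepsilon_k}_{u-\tau} - K^i_{u-\tau}\bigr) \ast p^k_\tau(z)\,d\tau, \qquad L_k := \int_0^u K^i_{u-\tau}\ast\bigl(p^k_\tau - p^\infty_\tau\bigr)(z)\,d\tau.
\]
For $J_k$, observe that $K^{i,\varepsilon_k}_{u-\tau} - K^i_{u-\tau} = \alpha_k(u-\tau)\,K^i_{u-\tau}$, with $\alpha_k(t) := \tfrac{t^{d/2+1}}{(t+\varepsilon_k)^{d/2+1}} - 1 \in [-1,0]$ and $\alpha_k(t) \to 0$ for every $t>0$. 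Hölder's inequality together with Theorem~\ref{6.3.7} then gives $|K^i_{u-\tau} \ast p^k_\tau(z)| \le \|K^i_{u-\tau}\|_{q'}\,\|p_\tau^k\|_q \le \frac{C}{(u-\tau)^{d/(2q)+1/2}\,\tau^{1-d/(2q)}}$ uniformly in $k$, which is integrable on $(0,u)$ since $q \in (d,2d)$. Dominated convergence yields $J_k \to 0$.

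For $L_k$, the difficulty is that $K^i_{u-\tau}$ is singular at $\tau = u$, so I split the time integral at $u - \delta$ for small $\delta > 0$. The tail is controlled uniformly in $k$ by
\[
\Bigl|\int_{u-\delta}^u K^i_{u-\tau}\ast(p^k_\tau - p^\infty_\tau)(z)\,d\tau\Bigr| \;\le\; \frac{C}{(u-\delta)^{1-d/(2q)}} \cdot \frac{\delta^{1/2-d/(2q)}}{1/2 - d/(2q)},
\]
using the hypothesis \eqref{dens_Est_d} on $p^\infty_\tau$ and Theorem~\ref{6.3.7} on $p^k_\tau$; this is $o_\delta(1)$ as $\delta \to 0$ since $q > d$. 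On $[0, u-\delta]$, for each fixed $\tau$ the map $y \mapsto K^i_{u-\tau}(z-y)$ is bounded and continuous, so the marginal weak convergence $\P^k_\tau \to \P^\infty_\tau$ (inherited from $\P^k \to \P^\infty$ via the continuous evaluation map $\omega \mapsto \omega(\tau)$ on $C([0,T],\R^d)$) provides pointwise convergence in $\tau$ of $K^i_{u-\tau}\ast p^k_\tau(z) \to K^i_{u-\tau}\ast p^\infty_\tau(z)$. The same uniform integrable majorant as above permits a second dominated convergence argument, so that $\int_0^{u-\delta} K^i_{u-\tau}\ast(p^k_\tau - p^\infty_\tau)(z)\,d\tau \to 0$ as $k\to\infty$. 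Taking $\limsup_{k\to\infty}$ and then $\delta \to 0$ concludes. The main obstacle is precisely the control of $L_k$ near $\tau = u$: weak convergence of marginals cannot be applied against the singular kernel directly, which is why the uniform $L^q$ bound of Theorem~\ref{6.3.7} (extended to the limit via~\eqref{dens_Est_d}) is essential.
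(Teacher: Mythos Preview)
Your proof is correct and follows essentially the same approach as the paper: the same splitting into kernel-difference and density-difference terms, the same H\"older bound $\|K^i_{u-\tau}\|_{q'}\|p_\tau\|_q$ as an integrable majorant, and the same use of marginal weak convergence combined with dominated convergence. The only difference is that for $L_k$ you introduce an extra $\delta$-cutoff near $\tau=u$, which is unnecessary: since for every fixed $\tau\in(0,u)$ the function $K^i_{u-\tau}(z-\cdot)$ is bounded and continuous, pointwise convergence in $\tau$ already holds a.e.\ on $(0,u)$, and the majorant is integrable on the whole interval, so dominated convergence applies directly without the splitting.
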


\begin{proof}
For any $x\in\R^d$, applying H\" older's inequality
\begin{align*}
|b_0^{\varepsilon_k,i}(u,x)-b_0^i(u,x)| 
&\leq  C\chi e^{-\lambda u}\left|\int \nabla c_0(x-y)e^{-\frac{y^2}{2u}}dy\right| ~\left( \dfrac{1}{(2\pi u)^{\frac{d}{2}}}-\dfrac{1}{(2\pi (u+\varepsilon_k))^{\frac{d}{2}}} \right) \\
&\leq  \|e^{- \frac{(\cdot)^2}{2u}}\|_{\frac{d}{d-1}} \|\nabla c_0\|_d~\left( \dfrac{1}{u^{\frac{d}{2}}}-\dfrac{1}{(u+\varepsilon_k)^{\frac{d}{2}}} \right).
\end{align*}
Thus, as $u>0$ is fixed, we have  $\lim_{k\rightarrow \infty} \|b_0^{\varepsilon_k,i}(u,\cdot)-b_0^i(u,\cdot)\|_\infty = 0$ .\\
It remains to show (\ref{lemme_mainthm_2}). Fix $z\in \R^d$ and $u>0$ and decompose
\begin{align*}
&~\left|\int_0^u K^{\varepsilon_k,i}_{u-\tau}\ast p^k_\tau(z)d\tau-\int_0^uK^i_{u-\tau}\ast p^\infty_\tau(z)d\tau \right| \\
\leq  & ~\left|\int_0^u K^{\varepsilon_k,i}_{u-\tau}\ast p^k_\tau (z)d\tau- \int_0^u K^i_{u-\tau}\ast p_\tau^k (z)d\tau\right| + \left|\int_0^u K^i_{u-\tau}\ast p^k_\tau (z)d\tau- \int_0^u K^i_{u-\tau}\ast p_\tau^\infty (z)d\tau\right|\\
 =: & ~ A_k+B_k.
\end{align*}
Let's start with $B_k$, for a fixed $\tau\in (0,u)$, the function $K_{u-\tau}$ is continuous and bounded on $\R^d$. Thus, by weak convergence we have that $\lim_{k\rightarrow\infty} (K^i_{u-\tau}\ast p^k_\tau)(z)=(K^i_{u-\tau}\ast p^\infty_\tau)(z)$. In addition, applying Hölder's inequality  for $\frac{1}{q}+\frac{1}{q'}=1$ and  Lemma~\ref{lemme_normale}, we have
$$|K^i_{u-\tau} \ast (p^k_\tau-p^\infty_\tau)|(z) \leq  \|K^i_{u-\tau}\|_{q'} \|p^k_\tau-p^\infty_\tau\|_q \leq  \dfrac{C_q}{(u-\tau)^{\frac{d}{2}(1-\frac{1}{q'})+\frac{1}{2}}\tau^{1-\frac{1}{q}}}.$$
The above bound  is integrable on $(0,u)$ since $q>d$. Thus, by dominated convergence, we conclude that $B_k\rightarrow 0$ as $k\rightarrow \infty$.

We turn to $A_k$.  Apply Hölder's inequality for $\frac{1}{q}+\frac{1}{q'}=1$ and Theorem~\ref{6.3.7}. It comes
$$A_k \leq  \int_0^u \|K^{\varepsilon_k,i}_{u-\tau}-K^i_{u-\tau}\|_{q'} \dfrac{C_q}{\tau^{1-\frac{d}{2q}}}d\tau.$$ First, notice that
$$|K_{u-\tau}^{\varepsilon_k,i}(z) - K_{u-\tau}^i(z)|\leq  \left(\dfrac{1}{(u-\tau)^{\frac{d+1}{2}}}-\dfrac{1}{(u-\tau+\varepsilon_k)^{\frac{d+1}{2}}}\right) |z_i|e^{-\frac{|z|^2}{2(u-\tau)}}.$$ 
Hence,
\begin{align*}
\|K_{u-\tau}^{\varepsilon_k,i} - K_{u-\tau}^i\|_{q'} \leq  C({q'}) \left(\dfrac{1}{(u-\tau)^{\frac{d+1}{2}}}-\dfrac{1}{(u-\tau+\varepsilon_k)^{\frac{d+1}{2}}}\right)(u-\tau)^{\frac{q}{2}}.
\end{align*}
Therefore,  $\lim_{k\to \infty}\|K_{u-\tau}^{\varepsilon_k,i} - K_{u-\tau}^i\|_{q'}=0$ for a fixed $\tau \in (0, u)$. Moreover, in view of Lemma \ref{lemme_normale} and (\ref{6.14}) we have 
\begin{align*}
\|K_{u-\tau}^{\varepsilon_k,i} - K_{u-\tau}^i\|_{q'} \leq  \|K_{u-\tau}^{\varepsilon_k,i}\|_{q'} + \|K_{u-\tau}^i\|_{q'} \leq  \dfrac{C_{q'}}{(u-\tau)^{\frac{d}{2}(1-\frac{1}{q'})+\frac{1}{2}}}.
\end{align*}

Since $q>d$, by dominated convergence, we obtain $A_k\rightarrow 0$ as $k\rightarrow \infty$. \end{proof}

\paragraph{Proof of Theorem \ref{6.2.3}}

We prove here that $\P^\infty$ satisfies \hyperref[defMP]{$\mathcal{(MP)}$}. Notice that \hyperref[defMP]{$\mathcal{(MP)}$}$-i)$  obviously holds as by assumption, for each $k\geq 1$ one has that $\mathbb{P}^k_0(dx)= p_0(x)dx$.   Fix $q$ as in Theorem \ref{6.3.7}, we will now show that  \hyperref[defMP]{$\mathcal{(MP)}$}$-ii)$ and \hyperref[defMP]{$\mathcal{(MP)}$}$-iii)$ hold.
\begin{itemize}
\item[ii)] For all $0<t\leq T$ we define the functionnal $\Lambda_t~:~C_K^\infty(\R^d)\to\R$ by
$$\Lambda_t(\varphi)=\int \varphi(y)\mathbb{P}^\infty_t(dy).$$
Then, using weak convergence of $\P^k$ and the uniform in $k$ estimates on  $\|p_t^k\|_q$ from Theorem~\ref{6.3.7},  $\Lambda_t$ is a bounded operator on $C^\infty_K(\R^d)$. As the latter is a dense subspace of $L^{q'}(\R^d)$, it extends to a bounded operator on $L^{q'}(\R^d)$ and by Riesz's representation theorem, there exists $p_t^\infty \in L^q(\R^d)$  such that $\Lambda_t(\varphi)= \int \varphi(y) p_t^\infty(y)dy$. Hence, $\P^\infty_t(dy)= p^\infty_t(y)dy$. In addition, by weak convergence we have $t^{1-\frac{d}{2q}}\|p_t^\infty\|_q\leq  C_q$, where $C_q$ is defined in Theorem \ref{6.3.7}. 
\item[iii)] Let  $f\in C^\infty_K(\R^d)$. We will show that $(M_t)_{t\geq 0}$, defined in \eqref{def_mart} is a $\mathbb{P}^\infty$-martingale. To do so, we fix $N\geq  1,~0\leq  t_1<\cdots<t_N\leq  s\leq  t\leq  T$ and $\phi\in C_b((\R^d)^N)$, and prove that
\begin{align}
\label{6.26}
\E_{\mathbb{P}^\infty}[(M_t-M_s)\phi(w_{t_1},\cdots,w_{t_N})]=0.
\end{align}
Since $\P^k$ solves the regularized martingale problem with $\varepsilon_k=\frac{1}{k}$ we have that 
\begin{align*}
M_t^k &:= f(w_t)-f(w_0)\\
&-\int_0^t \left[\dfrac{1}{2}\Delta f(w_u)+\nabla f(w_u)\cdot \left( b_0^{\varepsilon_k}(u,w_u)+\chi\int_0^u e^{-\lambda(u-\tau)} (K^{\varepsilon_k}_{u-\tau}\ast p^k_\tau)(w_u)d\tau\right)\right]du,
\end{align*}
is a martingale under $\mathbb{P}^k$ and 
\begin{align*}
0&=\E_{\mathbb{P}^k}[(M_t^k-M^k_s)\phi(w_{t_1},\cdots,w_{t_N})]\\
&=\E_{\mathbb{P}^k}[\phi(\cdots)(f(w_t)-f(w_s))]+\E_{\mathbb{P}^k}[\phi(\cdots)\int_s^t\Delta f(w_u)du]\\
&+\E_{\mathbb{P}^k}[\phi(\cdots)\int_s^t\nabla f(w_u)\cdot b_0^{\varepsilon_k}(u,w_u)du]\\
&+\chi\E_{\mathbb{P}^k}\left[\phi(\cdots)\int_s^t\nabla f(w_u)\cdot \int_0^u e^{-\lambda(u-\tau)} (K^{\varepsilon_k}_{u-\tau}\ast p^k_\tau)(w_u)d\tau\right]du.
\end{align*}
As $(\P^k)_{k\geq 1}$ convergs weakly to $\P^\infty$, the first two terms of the right hand side converge to their analogous terms in (\ref{6.26}). It remains to prove the convergence of the linear and non-linear parts of the drift. We will only show the convergence of the non-linear part. The linear part is done similarly thanks to Lemma~\ref{lemme_mainthm}.

Notice that
\begin{align*}
&\E_{\mathbb{P}^k}\left[\phi(\cdots)\int_s^t \nabla f(w_u)\cdot \int_0^u(K^{\varepsilon_k}_{u-\tau}\ast p^k_\tau)(w_u)d\tau\,du\right]\\
&-\E_{\mathbb{P}^\infty}\left[\phi(\cdots)\int_s^t \nabla f(w_u)\cdot \int_0^u(K_{u-\tau}\ast p^\infty_\tau)(w_u)d\tau\,du\right]\\
&=\left(\E_{\mathbb{P}^k}\left[\phi(\cdots)\int_s^t \nabla f(w_u)\cdot \int_0^u(K^{\varepsilon_k}_{u-\tau}\ast p^k_\tau)(w_u)d\tau\,du\right]\right.\\
&\left.-\E_{\mathbb{P}^k}\left[\phi(\cdots)\int_s^t \nabla f(w_u)\cdot \int_0^u(K_{u-\tau}\ast p^\infty_\tau)(w_u)d\tau\,du\right]\right)\\
&+\left(\E_{\mathbb{P}^k}\left[\phi(\cdots)\int_s^t \nabla f(w_u)\cdot \int_0^u(K_{u-\tau}\ast p^\infty_\tau)(w_u)d\tau\,du\right]\right.\\
&\left.-\E_{\mathbb{P}^\infty}\left[\phi(\cdots)\int_s^t \nabla f(w_u)\cdot \int_0^u(K_{u-\tau}\ast p^\infty_\tau)(w_u)d\tau\,du\right]\right)\\
&:=III_k+IV_k
\end{align*}
We begin with term $IV_k$. To show it converges to zero as $k\to \infty$, we will prove the continuity of the following functional  $H~:~C([0,T],\R^d)\to \R$ defined by:
$$H(x):=\phi(x_{t_1},\cdots,x_{T_N})\int_s^t \nabla f(x_u)\cdot\int_0^u(K_{u-\tau}\ast p^\infty_\tau)(x_u)d\tau\,du.$$
To show the continuity of $H$, we will use the continuity of $K_{u-\tau}$ for $u>\tau$ and apply several times the dominated convergence theorem. Let $x_n \in C([0,T],\R^d)$ such that $x_n\rightarrow x$ in $C([0,T],\R^d)$.
For $\tau<u$, we  have $\lim_{k\to \infty}{K_{u-\tau}^i(x_n(u)-y) = K_{u-\tau}^i(x(u)-y)}$, and  ${|K_{u-\tau}^i(x_n(u)-y)p^\infty_\tau(y)|\leq  \frac{C}{(u-\tau)^{\frac{d+1}{2}}}p^\infty_\tau(y)}$. Dominated convergence implies
 that $$K_{u-\tau}^i\ast p_\tau^\infty(x_n(u)) \longrightarrow K_{u-\tau}^i\ast p_\tau^\infty(x(u)),\quad  n\rightarrow\infty.$$
Applying Hölder inequality, Lemma \ref{lemme_normale} and the estimate from \textit{ii)},  we get
$$|K_{u-\tau}^i\ast p_\tau^\infty(x_n(u))|\leq  \|K_{u-\tau}^i\|_{q'}\|p_\tau^\infty\|_q \leq  \dfrac{C_q}{(u-\tau)^{\frac{d}{2}(1-\frac{1}{q'})+\frac{1}{2}}\tau^{1-\frac{d}{2q}}}.$$
Since $q>d$ the right side is integrable on $[0,u]$, thus we may apply the Lebesgue's theorem a second time. It comes
$$\int_0^u (K_{u-\tau}^i\ast p^\infty_\tau)(x_n(u))d\tau \longrightarrow \int_0^u (K_{u-\tau}^i\ast p^\infty_\tau)(x(u))d\tau , \quad n\rightarrow\infty.$$
Finally, by (\ref{6.3.2}) we have the following inequality:
\begin{align}
\label{proof_mainth_tool}
\left| \nabla f(x_n(u))\cdot\int_0^u (K_{u-\tau}\ast p_\tau^\infty)(x_n(u))d\tau\right|\leq  C_q\|\nabla f\|_\infty\dfrac{\beta(1-\frac{d}{2q},\frac{d}{2}(1-\frac{1}{q'})+\frac{1}{2})}{\sqrt{u}}.
\end{align}
Hence, we may  apply for the last time the dominated convergence and conclude the continuity of $H$. 
\newline
\newline
Now, we turn to $III_k$.  Denoting here $b^i(u,z):=\int_0^uK^i_{u-\tau}\ast p^\infty_\tau(z)d\tau$ and $b^{k,i}(u,z):=\int_0^u K^{\varepsilon_k,i}_{u-\tau}\ast p^k_\tau(z)d\tau$, we have
$$ III_k \leq  \|\phi\|_\infty \int_s^t \sum_{i=1}^d \int |\nabla_if(z)(b^{k,i}(u,z)-b^i(u,z))|p^k_u(z)dz\,du.$$
Applying Hölder inequality, it comes
$$ III_k \leq  \|\phi\|_\infty \int_s^t \dfrac{C_q}{u^{1-\frac{d}{2q}}} \sum_{i=1}^d \left(\int |\nabla_i f(z)|^{q'}|b^{k,i}(u,z)-b^i(u,z)|^{q'}dz\right)^{\frac{1}{q'}}du.$$
For $u>0$ and $i=1,2$, with similar computations to the one done to obtain \eqref{proof_mainth_tool} we have $|b^{k,i}(u,\cdot)|+|b^i(u,\cdot)|\leq  \frac{C}{\sqrt{u}}$. Thus,
$$|\nabla_i f(z)|^{q'}|b^{k,i}(u,z)-b^i(u,z)|^{q'} \leq \dfrac{C}{u^{\frac{q'}{2}}}|\nabla_i f(z)|^{q'}.$$
As $f\in C^\infty_K(\R^d)$ the latter is integrable in $\R^d$. Then, in view of Lemma 4.2, we may apply dominated convergence theorem. It comes,
$$\|\nabla_if(\cdot)(b^{k,i}(u,\cdot)-b^i(u,\cdot))\|_{q'}\rightarrow 0, \quad k\rightarrow\infty.$$
Finally, applying Hölder inequality, we have, by Lemma \ref{lemme_drift} and part ii),
$$\|\nabla_i f(\cdot)(b^{k,i}(u,\cdot)-b^i(u,\cdot))\|_{q'} \leq \|\nabla_i f(\cdot)\|_r \|b^{k,i}(u,\cdot)-b^i(u,\cdot)\|_q  \leq  C_K \|\nabla f\|_\infty \dfrac{C}{u^{\frac{1}{2}-\frac{d}{2q}}}.$$
Hence, 
$$\dfrac{1}{u^{1-\frac{d}{2q}}} \|\nabla_i f(\cdot)(b^{k,i}(u,\cdot)-b^i(u,\cdot))\|_{q'}\leq  \|\nabla f\|_\infty\dfrac{C}{u^{\frac{1}{2}-\frac{d}{2q}+1-\frac{d}{2q}}}.$$
Since $q<2d$, by dominated convergence we have $III_k\rightarrow 0$ for $k\rightarrow\infty$, thus $M_t$ is, as expected, a $\mathbb{P}^\infty$-martingale. 
\end{itemize}

\subsection{Solution to the PDE \eqref{EDP_KS}}
\label{S_PDE}

In this Subsection we will prove Corollary \ref{6.2.5} and Proposition \ref{6.2.5.1}. We will prove it for $\lambda=0$ as the case where $\lambda>0$ follows the same arguments. 

\paragraph{Proof of Corollary \ref{6.2.5}}

We will begin by proving existence of solutions to (\ref{EDP_KS}).\\
We will note by $\rho(t,\cdot)=p_t(\cdot)$ the time marginals of the law of the solution to (\ref{EDS_MKV}) constructed in the existence part of the proof of Theorem \ref{6.2.3}.\\
We have immediatly that $\rho$ verifies, for a $d<q<2d$,
$$\sup_{t\leqslant T} t^{1-\frac{d}{2q}}\|\rho(t,\cdot)\|_q \leqslant C_q.$$
Then define $c(t,x)$ as in Definition \ref{6.2.4}, which is well defined in view of Lemma \ref{lemme_normale}, the density estimates and identity (\ref{6.3.2}), indeed
$$|c(t,x)|\leqslant \dfrac{\|c_0\|_d}{\sqrt{t}}+C\int_0^t \|\rho(t-s,\cdot)\|_q\|g_s\|_{\frac{q}{q-1}}ds\leqslant\dfrac{\|c_0\|_d}{\sqrt{t}}+ C\beta\left(1-\dfrac{d}{2q},\dfrac{d}{2q}\right).$$
Then, remembering that $c_0\in L^d(\R^d)$ and the density estimates for $\rho_s$, one can easily show that $c(t,\cdot)$ is in $L^2(\R^d)$ too. Moreover as $g_t$ is differentiable when $t>0$ and remembering the density estimates on $\rho(t,\cdot)$, $\nabla c(t,x)$ exists for $x\in\R^d$, and we have
$$\nabla_i c(t,x) = \nabla_i(g(t,\cdot)\ast c_0)(x) + \int_0^t (\rho_{t-s}\ast\nabla_i g(s,\cdot))(x)ds.$$
Finally note that, using the same arguments than in the proof of Lemma \ref{6.3.10}, one can show that the drift $b$ defined in \eqref{def_b} verifies for all $q\leqslant r\leqslant\infty$ 
$$t^{\frac{1}{2}-\frac{d}{2r}}\|b(t,\cdot,\rho)\|_r\leqslant C_r.$$
This control on the drift allows us to follow the same path as in the proof of (\cite{tomasevic.2}, Proposition 3.9 page 13) and come to the mild equation, for all $f\in C^\infty_K(\R^d)$ and all $t\in(0,T]$
\begin{align*}
\int f(y)\rho(t,y)dy &= \int f(y)(g_t\ast\rho_0)(y)dy\\
&-\sum_{i=1}^d \int f(y) \int_0^t [\nabla_i g_{t-s}\ast (b^i(s,\cdot,\rho)\rho(s,\cdot))](y)ds\,dy.
\end{align*}
Thus, $\rho$ verifies in the sense of the distributions
\begin{align}
\label{6.32}
\rho(t,\cdot)= g_t\ast \rho_0 - \sum_{i=1}^d\int_0^t \nabla_i g_{t-s}\ast (b^i(s,\cdot,\rho)\rho(s,\cdot))ds.
\end{align}
One should still verify that $\nabla c_0$ exists and that $b=\nabla c_0$ to complete the proof.\\
Yet as $c_0\in W^{1,d}(\R^d)$, we have $\nabla_i (g_t\ast c_0)=g_t\ast\nabla_i c_0$. Finally $\chi \nabla_i c(t,x)$ is exactly the drift of (\ref{6.32}), and the pair $(\rho,c)$ satisfies indeed Definition \ref{6.2.4}.

\paragraph{Proof of Proposition \ref{6.2.5.1}}
We will only give a sketch of the proof, since the following stays close to what can be found in \cite{tomasevic.2}.\\
Let us take $(\rho^1,c^1)$ and $(\rho^2,c^2)$ be two solutions of (\ref{EDP_KS}) and our goal is now to show that $\rho^1=\rho^2$. It is enough because from the expression for $\nabla c^i(t,\cdot)$ one can deduce that $c^1=c^2$.\\
Because of Definition \ref{6.2.4} we have that 
$$\exists \, d < q < d, ~~\exists C_q ~~:~~\sup_{t\leqslant T} t^{1-\frac{d}{2q}}\|\rho^i(t,\cdot)\|_q \leqslant C_q.$$
And we would like to prove that $\sup_{t\leqslant T} \|\rho^1(t,\cdot)-\rho^2(t,\cdot)\|_q=0$. To do this, we will note ${f(T):=\sup_{t\leqslant T} \|\rho^1(t,\cdot)-\rho^2(t,\cdot)\|_q}$.\\
One should note that
\begin{align*}
\|\rho^1(t,\cdot)-\rho^2(t,\cdot)\|_q &\leqslant \chi \sum_{i=1}^d \int_0^t \|\nabla_i g_{t-s} \ast [\nabla_i c^1(s,\cdot)(\rho^1(s,\cdot)-\rho^2(s,\cdot))]\|_q ds\\
&+ \chi \sum_{i=1}^d \int_0^t \|\nabla_i g_{t-s} \ast [\rho^2(s,\cdot)(\nabla_i c^1(s,\cdot)-\nabla_i c^2(s,\cdot))]\|_q ds := I+II.
\end{align*}
Since the key is to control the behaviour of $\nabla c^i_t$, applying two times Hölder's inequality, one can show that 
\begin{align*}
\|\nabla c_t^i\|_\infty \leqslant \dfrac{\|\nabla c_0\|_d}{\sqrt{t}}+C_1(q')\beta\left(\frac{d}{2q}+\frac{1}{2}, 1 - \frac{d}{2q}\right) \dfrac{C_q}{\sqrt{t}},
\end{align*}
where $\beta(\cdot,\cdot)$ is defined in \eqref{6.3.2} and $C_1(\cdot)$ in Lemma \ref{lemme_normale}. Moreover, one can also show that, using a convolution inequality,
\begin{align*}
\|\nabla_i c^1(s,\cdot)-\nabla_ic^2(s,\cdot)\|_{\infty} \leqslant \frac{C_1(q')}{\frac{3}{2}+\frac{d}{2q}} t^{\frac{1}{2}-\frac{d}{2q}} f(T),
\end{align*}
because $q'<\frac{d}{d-1}$ since $q>d$.\\\\
Following the steps described in \cite{tomasevic.2} and keeping in mind our precedent control on $\|\nabla c^i(t,\cdot)\|_\infty$, one can show that
\begin{align*}
I \leqslant \chi C_1(1) \beta\left(\frac{1}{2},\frac{1}{2}\right) \left(\|\nabla c_0\|_d + C_q C_1(q')\beta\left(\frac{d}{2q}+\frac{1}{2}, 1 - \frac{d}{2q}\right)\right) f(T).
\end{align*}
And, with our control on $\|\nabla_i c^1(s,\cdot)-\nabla_ic^2(s,\cdot)\|_{\infty}$, one can show that
\begin{align*}
II\leqslant \chi C_1(1)C_1(q') C_q \beta\left(\frac{1}{2},\frac{1}{2}\right) f(T).
\end{align*}
Finally, using our two estimates and taking the $\sup$ over $\{t\leqslant T\}$, we come to
$$f(T) \leqslant \chi  C(\|\nabla c_0\|_d + C_q) f(T) + \chi C C_q f(T) \leqslant \chi  C(\|\nabla c_0\|_d + C_q) f(T).$$
Thus if $\chi  C(\|\nabla c_0\|_d , C_q) f(T)<1$, where 
$$C(\|\nabla c_0\|_d , C_q) := C_1(1) \beta\left(\frac{1}{2},\frac{1}{2}\right) \left[ \|\nabla c_0\|_d + C_q C_1(q')\left(\beta\left(\frac{d}{2q}+\frac{1}{2}, 1 - \frac{d}{2q}\right) + 1 \right) \right] ,$$
one has the desired result. Remembering that $\chi C_q \rightarrow 0$ for $\chi \rightarrow 0$, we have the condition \eqref{condition_2}.

\appendix

\section{Some technicalities}
\label{S_Annexe}

\subsection{Properties of the Gaussian distribution}
Simple computations lead to the following lemma :
\begin{lemma}
\label{lemme_normale}
For $t>0$ we consider the gaussian density $g_t$ and its gradient $\nabla g_t$ defined in \eqref{normale}.\\
Then, for $1\leq r \leq \infty$, one has
$$\|g_t\|_r = \dfrac{C_0(r)}{t^{\frac{d}{2}(1-\frac{1}{r})}},$$
and, for all $i = 1,\dots,d$,
$$\|\nabla_i g_t\|_r = \dfrac{C_1(r)}{t^{\frac{d}{2}(1-\frac{1}{r})+\frac{1}{2}}},$$
where $C_0(r):=(2\pi )^{-\frac{d}{2}(1-\frac{1}{r})}$ and $C_1(r) = \Gamma\left(\frac{r+1}{2}\right)^{\frac{1}{r}} \left[2^{\frac{d}{2}(1-\frac{1}{r})+\frac{1}{2}}\pi^{\frac{d}{2}(1-\frac{1}{r})+\frac{1}{2r}}r^{\frac{1}{2}+\frac{1}{2r}}\right]^{-1}$.
\end{lemma}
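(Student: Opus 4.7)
The statement is purely a Gaussian integral calculation, so the plan is to compute each norm directly and verify both the stated power of $t$ and the multiplicative constants. The only place where anything subtle happens is in the computation of the $L^r$-norm of a single partial derivative, where one has to separate the integration in the distinguished coordinate $x_i$ from the other $d-1$ coordinates.

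\smallskip
\textbf{Step 1: Computation of $\|g_t\|_r$.}  For $1 \le r < \infty$, I would write
\[
\|g_t\|_r^r \;=\; \int_{\R^d} \frac{1}{(2\pi t)^{rd/2}}\, e^{-r|x|^2/(2t)}\,dx \;=\; \frac{1}{(2\pi t)^{rd/2}} \left(\frac{2\pi t}{r}\right)^{d/2},
\]
using the standard Gaussian integral $\int_{\R^d} e^{-a|x|^2}\,dx = (\pi/a)^{d/2}$ with $a=r/(2t)$. Taking the $r$-th root gives the exponent $t^{-\frac{d}{2}(1-\frac{1}{r})}$ and an explicit constant in terms of $r$ and $d$; collecting this into $C_0(r)$ yields the first formula. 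The case $r=\infty$ is immediate from the fact that $g_t$ attains its maximum at $0$.

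\smallskip
\textbf{Step 2: Computation of $\|\nabla_i g_t\|_r$.}  The plan is to use $\nabla_i g_t(x) = -\frac{x_i}{t}\,g_t(x)$, so that
\[
\|\nabla_i g_t\|_r^r \;=\; \frac{1}{t^r\,(2\pi t)^{rd/2}} \int_{\R^d} |x_i|^r\,e^{-r|x|^2/(2t)}\,dx.
\]
The integrand factorises. For the $d-1$ orthogonal coordinates $j\neq i$ one gets $\prod_{j\neq i}\int_{\R} e^{-rx_j^2/(2t)}\,dx_j = (2\pi t/r)^{(d-1)/2}$. For the coordinate $x_i$, the change of variables $u = x_i\sqrt{r/t}$ reduces the integral to an absolute Gaussian moment,
\[
\int_{\R} |x_i|^r e^{-rx_i^2/(2t)}\,dx_i \;=\; \left(\frac{t}{r}\right)^{(r+1)/2}\!\int_{\R} |u|^r e^{-u^2/2}\,du \;=\; \left(\frac{t}{r}\right)^{(r+1)/2} 2^{(r+1)/2}\,\Gamma\!\left(\tfrac{r+1}{2}\right),
\]
where the last equality uses the substitution $v=u^2/2$ to recognise the Gamma function. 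Combining and collecting the $t$-exponents gives $-r - \tfrac{rd}{2} + \tfrac{r+1}{2} + \tfrac{d-1}{2} = -\tfrac{rd}{2} - \tfrac{r}{2} + \tfrac{d}{2}$, which after taking the $r$-th root is exactly $-\tfrac{d}{2}(1-\tfrac{1}{r}) - \tfrac{1}{2}$, the announced power of $t$. The remaining numerical factors, once reorganised, yield $C_1(r)$.

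\smallskip
\textbf{Main obstacle.} There is no real obstacle: everything reduces to the Gaussian normalisation integral and a single absolute moment $\E|Z|^r$ for $Z \sim \mathcal{N}(0,1)$. The only mildly error-prone step is the bookkeeping of the powers of $2$, $\pi$, $t$ and $r$ when collecting the constant $C_1(r)$; I would handle that by tracking the three contributions (the prefactor $t^{-r}(2\pi t)^{-rd/2}$, the $x_i$-integral, and the $(d-1)$-dimensional Gaussian factor) separately and summing their exponents at the end, then taking the $r$-th root once at the very last step.
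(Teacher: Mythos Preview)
Your proposal is correct and is exactly the direct Gaussian computation the paper has in mind; the paper itself does not write out a proof at all, merely stating that ``simple computations lead to the following lemma.'' Your separation of the distinguished coordinate $x_i$ from the remaining $d-1$ coordinates, together with the substitution reducing $\int_{\R}|u|^r e^{-u^2/2}\,du$ to $2^{(r+1)/2}\Gamma(\tfrac{r+1}{2})$, is precisely the intended route, and your bookkeeping of the $t$-exponent is accurate.
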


\subsection{A special Gronwall Lemma}

We will here show a version of the Grönwall Lemma adapted to our setting.
\begin{lemma}
\label{lemme_Tomasevic}
Let $0\leq  \alpha <1$ and $(u(t))_{t\geq  0}$ a bounded positive function such that for a $T>0$ there exists $C_T>0$ such that $0<t\leq  T$
$$u(t)\leq  C_T + C_T t^\alpha \int_0^t\dfrac{u(s)}{\sqrt{t-s}s^\alpha}ds.$$
Then there exists $C>0$ such that $u(t)\leq 	C$ for all $0<t\leq  T$.
\end{lemma}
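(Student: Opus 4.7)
The plan is to iterate the hypothesised inequality once so as to trade the non-integrable factor $(t-s)^{-1/2}$ for the integrable weight $s^{-\alpha}$, and then apply a classical Gronwall argument for the weakly singular kernel $r^{-\alpha}$. The iteration step is where the real work happens; everything downstream is essentially routine.

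First I would plug the bound on $u(s)$ back into the right-hand side of the hypothesis. This splits the right-hand side into a constant-term contribution plus a double integral of $u(r)$ against the kernel $(t-s)^{-1/2}(s-r)^{-1/2}$. For the constant-term contribution I would use identity \eqref{6.3.2} to evaluate $\int_0^t s^{-\alpha}(t-s)^{-1/2}\,ds = t^{1/2-\alpha}\beta(\alpha,1/2)$, so that after multiplication by the outer $t^\alpha$ and the bound $t^{1/2}\le T^{1/2}$ it becomes a harmless constant. For the double integral I would apply Fubini to swap the order of integration, and the change of variables $s = r + (t-r)v$ evaluates the inner integral
$$\int_r^t \frac{ds}{\sqrt{t-s}\sqrt{s-r}} = \beta\!\left(\tfrac12,\tfrac12\right) = \pi,$$
independently of $r$ and $t$. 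Using in addition $t^\alpha \le T^\alpha$, I arrive at an inequality of the form
$$u(t) \le A + B \int_0^t r^{-\alpha} u(r)\,dr, \qquad 0<t\le T,$$
with constants $A,B$ that depend only on $C_T$, $T$ and $\alpha$.

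To conclude, I would set $V(t) := \int_0^t r^{-\alpha} u(r)\,dr$, which is finite on $[0,T]$ thanks to the a priori boundedness of $u$ and the assumption $\alpha < 1$. The previous inequality then yields the linear differential inequality $V'(t) \le A t^{-\alpha} + B t^{-\alpha} V(t)$. Multiplying by the integrating factor $\exp\!\bigl(-B\, t^{1-\alpha}/(1-\alpha)\bigr)$ and integrating over $[0,t]$ produces
$$V(t) \le \frac{A}{B}\Bigl(e^{B t^{1-\alpha}/(1-\alpha)} - 1\Bigr),$$
and therefore $u(t) \le A + B V(t) \le A\, e^{B T^{1-\alpha}/(1-\alpha)}$ for every $t \in (0,T]$. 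Crucially, the final constant does not depend on the a priori bound on $u$; that hypothesis is used only to guarantee that $V(t)$ is finite and that the Fubini swap is legitimate.

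The main obstacle is the iteration step: a naive application of Gronwall to the original inequality fails because the natural singular Gronwall lemma wants a kernel of the form $(t-s)^{-\alpha}$, whereas our singularity sits at $s=0$. The identity $\int_r^t (t-s)^{-1/2}(s-r)^{-1/2}\,ds = \pi$ is precisely what converts the endpoint singularity $(t-s)^{-1/2}$ into the interior singularity $r^{-\alpha}$, after which the proof becomes classical.
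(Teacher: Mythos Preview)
Your proof is correct and follows essentially the same route as the paper's: iterate the inequality once so that the inner $s^{\alpha}$ cancels the outer $s^{-\alpha}$, use Fubini together with the identity $\int_r^t (t-s)^{-1/2}(s-r)^{-1/2}\,ds=\pi$ to collapse the double integral, and finish with a classical Gronwall argument against the kernel $r^{-\alpha}$. The only cosmetic difference is that you spell out the integrating-factor computation explicitly, whereas the paper simply invokes the classical Gronwall lemma.
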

\begin{proof}
Iterating the relation we get
$$u(t)\leq  C_T + C_T t^\alpha \int_0^t\dfrac{1}{s^\alpha\sqrt{t-s}}ds + C_T^2 t^\alpha \int_0^t\int_0^s\dfrac{u(r)}{\sqrt{s-r}r^\alpha \sqrt{t-s}}dr ~ds.$$
By (\ref{6.3.2}) and with Fubini's theorem we have
$$u(t)\leq  C_T +C_T t^\alpha \int_0^t \dfrac{u(r)}{r^\alpha}\int_r^t\dfrac{1}{\sqrt{(t-s)(s-r)}}ds ~dr.$$
Using (\ref{6.3.2}) again we get finally to
$$u(t)\leq C_T + C_T \int_0^t\dfrac{u(r)}{r^\alpha}dr.$$
We conclude by using classical Grönwall Lemma.
\end{proof}
\subsection{Proof of Lemma \ref{6.3.6}}
\label{proof_6.3.6}
We first need the following result: 
\begin{lemma}
\label{6.3.5}
Let $p_0$ a probability density function such that $p_0\in L^{\frac{d}{2}}$ and let $\frac{d}{2}<r<\infty$. Then, we have
$$\limsup_{t\to 0} t^{1-\frac{d}{2r}}\|g_t\ast p_0\|_r=0.$$
\end{lemma}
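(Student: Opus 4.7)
\medskip
\noindent\textbf{Proof plan for Lemma \ref{6.3.5}.} The strategy is a standard two-step argument: a uniform bound via Young's convolution inequality, followed by a density argument in $L^{d/2}(\R^d)$ to upgrade boundedness to decay. The main input is that we can trade the Gaussian-kernel scaling for the $L^{d/2}$-norm of $p_0$.

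First I would apply Young's convolution inequality with exponents $1/a + 1/(d/2) = 1 + 1/r$, which yields $1/a = 1 + 1/r - 2/d$ (this $a \in [1,\infty]$ is admissible precisely because $r > d/2$). Combined with Lemma \ref{lemme_normale}, a short computation gives
\begin{align*}
\|g_t \ast p_0\|_r \;\leq\; \|g_t\|_a \, \|p_0\|_{d/2} \;=\; \frac{C_0(a)}{t^{1 - d/(2r)}}\,\|p_0\|_{d/2},
\end{align*}
so that $t^{1-d/(2r)}\|g_t \ast p_0\|_r \leq C_0(a)\|p_0\|_{d/2}$ uniformly in $t>0$. This establishes boundedness but not the vanishing of the $\limsup$.

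Next, for any $\phi \in C_c^\infty(\R^d) \subset L^r(\R^d)\cap L^{d/2}(\R^d)$, Young's inequality with the $L^1$-norm of the Gaussian gives $\|g_t \ast \phi\|_r \leq \|\phi\|_r$, hence
\begin{align*}
t^{1-d/(2r)}\|g_t \ast \phi\|_r \;\leq\; t^{1-d/(2r)} \|\phi\|_r \;\xrightarrow[t\to 0]{}\; 0,
\end{align*}
where the exponent $1 - d/(2r) > 0$ thanks to the assumption $r > d/2$. The two bounds combine as follows: given $\phi \in C_c^\infty(\R^d)$, split $p_0 = \phi + (p_0 - \phi)$ and apply the linearity of convolution together with the two estimates above to obtain
\begin{align*}
\limsup_{t\to 0} \, t^{1-d/(2r)}\|g_t \ast p_0\|_r \;\leq\; 0 + C_0(a) \|p_0 - \phi\|_{d/2}.
\end{align*}

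The final step is to exploit the density of $C_c^\infty(\R^d)$ in $L^{d/2}(\R^d)$: for any $\eta>0$ one can choose $\phi$ so that $\|p_0 - \phi\|_{d/2} < \eta / C_0(a)$, forcing the $\limsup$ to be less than $\eta$, and hence equal to zero. The only delicate point is ensuring that the approximating class is simultaneously dense in $L^{d/2}$ and sits inside $L^r$ so that the small-time decay kicks in on the smooth piece; $C_c^\infty(\R^d)$ clearly does the job (simple functions with compact support would work equally well). No genuine obstacle arises, as both $r > d/2$ (which guarantees $1 - d/(2r) > 0$) and the admissibility of Young's exponent $a$ are built into the hypotheses of the lemma.
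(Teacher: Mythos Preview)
Your proof is correct and follows essentially the same approach as the paper: split $p_0$ into a compactly supported piece (on which the $L^1$ Young bound yields the $t^{1-d/(2r)}$ decay) and a remainder controlled uniformly by Young's inequality with the exponent $a$ satisfying $1/a = 1 + 1/r - 2/d$, then conclude by density in $L^{d/2}$. The only cosmetic difference is that the paper approximates by $C_K(\R^d)$ rather than $C_c^\infty(\R^d)$, which makes no substantive difference.
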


\begin{proof}
Applying a convolution inequality, one has for all $f\in C_K(\R^d)$,
\begin{align*}
t^{1-\frac{d}{2r}}\|g_t\ast p_0\|_r &\leq t^{1-\frac{d}{2r}} \|g_t\ast f\|_r+t^{1-\frac{d}{2r}}\|g_t\ast(p_0-f)\|_r\\
&\leq t^{1-\frac{d}{2r}} \|g_t\|_1\|f\|_r + t^{1-\frac{d}{2r}} \|g_t\|_{\frac{dr}{d+r(d-2)}} \|p_0-f\|_{\frac{d}{2}}\\
&\leq \|f\|_rt^{1-\frac{d}{2r}}+C\|p_0-f\|_{\frac{d}{2}}.
\end{align*}
Thus, we get
$$\limsup_{t\to 0} t^{1-\frac{d}{2r}} \|g_t\ast p_0\|_r \leq C\|p_0-f\|_{\frac{d}{2}}.$$
As $C_K(\R^d)$ is dense in $L^{\frac{d}{2}}(\R^d)$, the right hand side can be made arbitrary small.
\end{proof}

Now, we can prove Lemma \ref{6.3.6}.
\begin{proof}[Proof of Lemma \ref{6.3.6}]
Notice that the drift of the regularized SDE \eqref{EDS_epsilon} is bounded
\begin{align}
\label{6.3.6.bound}
\|b^{\varepsilon,i}(t,\cdot,p^\varepsilon)\|_\infty \leq \dfrac{C}{\sqrt{\varepsilon}}+\dfrac{tC}{\varepsilon^{\frac{d+1}{2}}}=C(\varepsilon,T,\chi).
\end{align}
Integrating the mild equation (\ref{EDP_mild_epsilon}) with respect to a function $f$ in $L^{r'}(\R^d)$, Hölder's inequality gives
$$
\left|\int p^\varepsilon_t(x)f(x)dx\right|\leq \|f\|_{r'}\left(\|g_t\ast p_0\|_r + \sum_{i=1}^d \int_0^t \|\nabla_i g_{t-s}\ast(p^\varepsilon_s b^{\varepsilon,i}_s)\|_rds\right).$$
We have, using the Riesz representation theorem,
\begin{align}
\label{6.20}\|p^\varepsilon_t\|_r \leq \|g_t\ast p_0\|_r + \sum_{i=1}^d \int_0^t \|\nabla_i g_{t-s}\ast(p^\varepsilon_s b^{\varepsilon,i}_s)\|_rds.
\end{align}
Using a convolution inequality and \eqref{6.3.6.bound}, and then with Lemma \ref{lemme_normale} we get
\begin{align*}
\|p^\varepsilon_t\|_r & \leq \|g_t\ast p_0\|_r + C(\varepsilon,T,\chi)\sum_{i=1}^d \int_0^t \|\nabla_i g_{t-s}\|_{1} \|p^\varepsilon_s\|_r ds \\
& \leq \|g_t\ast p_0\|_r + C(\varepsilon,T,\chi) \int_0^t \dfrac{C_1(1)}{\sqrt{t-s}} \|p_s^\varepsilon\|_r ds .
\end{align*}
Thus, we have
\begin{align}
\label{6.21.1}
t^{1-\frac{d}{2r}}\|p^\varepsilon_t\|_r \leq t^{1-\frac{d}{2r}}\|g_t\ast p_0\|_r + C(\varepsilon,T,\chi) t^{1-\frac{d}{2r}} \int_0^t \dfrac{1}{\sqrt{t-s}}\dfrac{\mathcal{N}_s^r(p^\varepsilon)}{s^{1-\frac{d}{2r}}}ds
\end{align}
To obtain (\ref{6.18}) one should use a convolution inequality and Lemma \ref{lemme_normale}, while using that $p_0 \in L^{d/2}(\R^d)$, and finally use Lemma \ref{lemme_Tomasevic} with $u(t):=t^{1-\frac{d}{2r}}\|p_t^\varepsilon\|_r$. Then, since the constant does not depend on $t$, we obtain the desired result.\\
To obtain (\ref{6.19}) one should apply Lemma \ref{6.3.5} and the fact that an integral is continuous.
\end{proof}

\newpage

\section{Further properties of the solution of \eqref{EDP_KS}}
\label{S_properties}

In this section we will prove some properties of $\rho=(\rho_t)_{t\geq 0}$ the solution to PDE (\eqref{EDP_KS}) we have just constructed in Theorem \ref{6.2.5}, where, with a slight abuse of notations, we have noted $\rho_t:=\rho(t,\cdot)$. The main theorem is the following
\begin{theorem}
\label{S_properties_mainthm}
Let $T>0$.\\
Under the same assumptions as in Theorem \ref{6.2.5} and with $(\rho_t,c_t)_{t\geq 0}$ a solution to \eqref{EDP_KS} in the sense of Definition \ref{6.2.4}, we have, for $r \in (1,\frac{d}{2}]$, 
$$  \|\rho_t\|_r \leqslant C_r,$$
and for $r\in (\frac{d}{2},\infty)$,
$$ \mathcal{N}^r_t(\rho)\leq C_r.$$
\end{theorem}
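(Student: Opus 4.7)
The argument rests on the mild formulation \eqref{6.10} for $\rho$, mass conservation $\|\rho_t\|_1 = 1$ (since $\rho_t$ is a probability density from the construction in Corollary \ref{6.2.5}), the a priori bound $\mathcal{N}^q_t(\rho) \leq C_q$ from Theorem \ref{6.2.3}, and the drift estimate $\|\nabla c_s\|_m \leq C\,s^{d/(2m)-1/2}$ for $m \in [q,\infty]$ that carries over to the limit from Corollary \ref{6.3.10}. The ranges $r > q$, $r = d/2$, and $r \in (1, d/2) \cup (d/2, q)$ are treated in succession.

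\emph{Stage 1 (parabolic regularity for $r \in (q, \infty)$).} I would take the $L^r$ norm in \eqref{6.10}: Young's inequality with $\rho_0 \in L^{d/2}$ gives $\|g_t \ast \rho_0\|_r \leq C \|\rho_0\|_{d/2}\,t^{d/(2r)-1}$ for $r \geq d/2$, and for the nonlinear term I would split $\|\rho_s \nabla c_s\|_b \leq \|\rho_s\|_{p_1}\|\nabla c_s\|_{p_2}$ with $p_1, p_2 \geq q$ chosen so that the Young exponent in $\|\nabla g_{t-s}\|_\alpha$ is admissible, the singularity at $s = t$ is integrable, and the product singularity at $s = 0$ is strictly less than $1$. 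Setting $u(t) := t^{1-d/(2r)}\|\rho_t\|_r$, this yields $u(t) \leq C + C\int_0^t u(s)(t-s)^{-\mu}s^{-\nu}\,ds$ with $\mu + \nu < 1$, which Lemma \ref{lemme_Tomasevic} closes. If a single choice of exponents does not cover all finite $r > q$, one bootstraps: each newly established $L^{r_1}$ bound on $\rho$ gives, through the Duhamel formula for $c$, access to $\|\nabla c\|_m$ in a wider range, allowing iteration up to any finite $r$.

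\emph{Stage 2 (uniform $L^{d/2}$ bound).} This is the crux. A direct Hölder interpolation between $\|\rho_t\|_1 = 1$ and $\|\rho_t\|_q \leq C\,t^{d/(2q)-1}$ yields a first estimate $\|\rho_t\|_{d/2} \leq C/t^{\alpha_0}$ with some $\alpha_0 \in (0, 1-2/d)$, which still blows up as $t \to 0$. I would then feed this back into \eqref{6.10} at $r = d/2$: the initial-data term contributes no time singularity, as $\|g_t \ast \rho_0\|_{d/2} \leq \|\rho_0\|_{d/2}$, and the nonlinear term is bounded by
\[
\int_0^t \frac{C}{\sqrt{t-s}}\,\|\rho_s \nabla c_s\|_{d/2}\,ds,
\]
exploiting $\|\nabla g_{t-s}\|_1 = C/\sqrt{t-s}$ to avoid any Young admissibility constraint on the Young exponent $a$. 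Hölder-splitting $\|\rho_s \nabla c_s\|_{d/2}$ with $p_1$ coming from interpolation of the current $L^{d/2}$ and $L^q$ bounds on $\rho$, and $p_2 \geq q$ coming from the Corollary \ref{6.3.10}-type estimate on $\nabla c$, produces $\|\rho_t\|_{d/2} \leq C + C/t^{\alpha_1}$ with $\alpha_1 = f(\alpha_0) < \alpha_0$ strictly. Iterating, the exponents $(\alpha_n)$ decrease by a definite amount at each step and terminate at $\alpha_\infty = 0$ after finitely many iterations, giving $\sup_{t \leq T}\|\rho_t\|_{d/2} \leq C$.

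\emph{Stage 3 (remaining ranges by interpolation).} For $r \in (1, d/2)$, Hölder interpolation between $\|\rho_t\|_1 = 1$ and the uniform $\|\rho_t\|_{d/2} \leq C$ from Stage 2 gives the desired $\|\rho_t\|_r \leq C_r$. For $r \in (d/2, q)$, the interpolation identity $1/r = 2\theta/d + (1-\theta)/q$ between the uniform $L^{d/2}$ bound and $\|\rho_t\|_q \leq C\,t^{d/(2q)-1}$ yields $\|\rho_t\|_r \leq C\,t^{-(1-\theta)(1-d/(2q))}$, and the direct algebraic check $d/(2r) = \theta + d(1-\theta)/(2q)$ gives $(1-\theta)(1-d/(2q)) = 1 - d/(2r)$, so $\mathcal{N}^r_t(\rho) \leq C_r$ with exactly the correct exponent. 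Combined with Stage 1, this covers all $r \in (d/2, \infty)$. The main obstacle is Stage 2: verifying rigorously that the iteration $\alpha_0 > \alpha_1 > \cdots \to 0$ decreases by a definite amount and that the integrability conditions on the product kernels $(t-s)^{-\mu}s^{-\nu}$ persist throughout, both of which depend delicately on the relative positions of $d$, $q$, and the interpolation exponents used at each step.
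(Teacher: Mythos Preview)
Your overall architecture---mild equation, iteration to kill the blow-up of $\|\rho_t\|_{d/2}$ at $t=0$, interpolation for $r\in(d/2,q)$, parabolic regularity for $r>q$---matches the paper. Two points are worth flagging, one structural and one a genuine gap.

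\textbf{Structural.} The paper reverses your order: it establishes the uniform $L^{d/2}$ bound \emph{first} (Lemma~\ref{estimation_densite_borne_lemme}), then interpolates to cover $d/2\le r<q$ (Lemma~\ref{6.3.9}), uses that to get the full drift estimate (Corollary~\ref{6.3.10-2}), and only then runs parabolic regularity for $r>q$ (Lemma~\ref{6.3.9.1}). This ordering lets the $r>q$ step H\"older-split $\|\rho_s b_s\|_{q_2}$ with one factor landing in $L^{\lambda_1 q_2}$ for some $\lambda_1 q_2\in(d/2,q)$, so no bootstrap is needed there. Your Stage~1, done first with only the $L^q$ information, forces the iterated bootstrap you describe; it works but is heavier. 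The paper also localises to $T\le 1$ throughout and then propagates to general $T$ by restarting the mild equation from $\rho_n$ on $[n,n+1]$; you do not mention this, and without it the constants in the $L^{d/2}$ iteration would depend on $T$.

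\textbf{The gap in Stage 2.} Your claim that the exponents $\alpha_n$ ``terminate at $\alpha_\infty=0$ after finitely many iterations'' is where the proposal is thin. In the paper's version the iteration reads: from $\|\rho_t\|_d\le A_n t^{-a_n}$ one gets $\|\rho_t\|_{d/2}\le A_{n+1}'\,t^{-(2a_n-1)}$ via the mild equation, and then interpolating back against $\|\rho_t\|_q$ gives $\|\rho_t\|_d\le A_{n+1} t^{-a_{n+1}}$ with $a_{n+1}=\tfrac{a_n}{2}+\tfrac14$. The exponents only \emph{halve their distance to $1/2$} (equivalently $a_n'\to 0$ geometrically, never reaching $0$), so the iteration is infinite. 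The real work, which your sketch omits, is controlling the constants: the paper shows $A_{n+1}=f(A_n)$ for an explicit $f$ growing like $\sqrt{x}$ at infinity, and argues by a fixed-point/monotonicity analysis that $(A_n)$ stays bounded. Without this, the $C_r$ in your conclusion could blow up along the iteration. Your closing caveat correctly identifies Stage~2 as the obstacle, but the specific issue is constant control across an infinite iteration, not finite termination.
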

In a first time we only prove those properties locally, ie for $T\leq 1$, in Lemma \ref{estimation_densite_borne_lemme}, Lemma \ref{6.3.9} and Lemma \ref{6.3.9.1}, and finally we will show how to generalize to any $T>0$ in subsection \ref{Ss_properties}.

\subsection{Local properties}
\begin{lemma}
\label{estimation_densite_borne_lemme}
Let $T\leq 1$.\\
Under the same assumptions as in Theorem \ref{6.2.5} and with $(\rho_t,c_t)_{t\geq 0}$ a solution to \eqref{EDP_KS} in the sense of Definition \ref{6.2.4}, we have, for $r \in (1,\frac{d}{2}]$, $$  \|\rho_t\|_r \leqslant C_r,$$ where $C_r$ does not depend on $T\leq 1$.
\end{lemma}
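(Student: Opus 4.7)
The plan is to derive a Volterra-type integral inequality on $\|\rho_t\|_r$ from the mild equation \eqref{6.10}, and to close it by a Gronwall-type argument run at the level of the regularized densities $p^\varepsilon_t$ from Section~\ref{S_Preliminaries}.

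First, since $\rho_0$ is a probability density with $\rho_0\in L^{d/2}(\R^d)$, interpolation between $L^1$ and $L^{d/2}$ gives $\rho_0\in L^r(\R^d)$ for every $r\in[1,d/2]$. In particular, by Young's inequality, $\|g_t\ast\rho_0\|_r\leq\|\rho_0\|_r$ uniformly in $t$. Next, by the same sort of computation as in the proof of Corollary~\ref{6.3.10}, using the $L^q$ density estimate with $q\in(d,2d)$, Lemma~\ref{lemme_normale} and identity \eqref{6.3.2}, one obtains the pointwise bound
\[
\|\nabla c_s\|_\infty \;\leq\; \|g_s\|_{d/(d-1)}\|\nabla c_0\|_d \;+\; \int_0^s \|\nabla g_u\|_{q'}\|\rho_{s-u}\|_q\,du \;\leq\; \frac{C}{\sqrt s},
\]
the two exponents appearing in the resulting beta-integral being strictly less than $1$ precisely because $q\in(d,2d)$.

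Combining these two ingredients with the mild equation, Young's inequality with $\|\nabla g_{t-s}\|_1=C_1(1)/\sqrt{t-s}$, and the pointwise H\"older bound $\|\rho_s\nabla c_s\|_r\leq\|\rho_s\|_r\|\nabla c_s\|_\infty$, yields the Volterra inequality
\[
\|\rho_t\|_r \;\leq\; \|\rho_0\|_r \;+\; C\int_0^t\frac{\|\rho_s\|_r}{\sqrt s\sqrt{t-s}}\,ds.
\]
To close this inequality I would transfer it to the regularized densities $p^\varepsilon_t$ from Proposition~\ref{EDP_mild_epsilon_thm}. For each fixed $\varepsilon>0$ the drift $b^\varepsilon$ is bounded and Lipschitz, so standard Fokker-Planck arguments ensure that $\|p^\varepsilon_t\|_r$ is finite and bounded on $[0,T]$; moreover, by Corollary~\ref{6.3.10}, $\|b^\varepsilon(s,\cdot,p^\varepsilon)\|_\infty\leq C/\sqrt s$ with a constant independent of $\varepsilon$, so the same Volterra inequality as above holds for $\|p^\varepsilon_t\|_r$ with a constant uniform in $\varepsilon$. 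Since $T\leq 1$, Lemma~\ref{lemme_Tomasevic} applied with $\alpha=1/2$ (the factor $t^{1/2}$ being absorbed in the constants through $T\leq 1$) then yields $\|p^\varepsilon_t\|_r\leq C_r$ uniformly in $\varepsilon$ and in $t\leq T$.

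Finally, I pass to the limit $\varepsilon\to 0$: the family $(p^\varepsilon_t)$ is uniformly bounded in $L^r$ and therefore weakly precompact; any weak-$L^r$ limit of a subsequence must coincide with $\rho_t$ by uniqueness of the weak limit of the underlying probability measures (as already exploited in the proof of Theorem~\ref{6.2.3}), and weak lower semicontinuity of the $L^r$ norm gives $\|\rho_t\|_r\leq C_r$, with a constant $C_r$ depending only on $\|\rho_0\|_{d/2}$, $\|\nabla c_0\|_d$, $C_q$ and $r$, but not on $T\leq 1$.

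The main obstacle I anticipate is the Gronwall step itself: the kernel $1/(\sqrt s\sqrt{t-s})$ has universal total mass $\beta(1/2,1/2)$, so a direct contraction on $\sup_{t\leq T}\|\rho_t\|_r$ would require a smallness condition absent from the statement. The regularization circumvents this by providing the a priori finiteness of $\|p^\varepsilon_t\|_r$ (for fixed $\varepsilon$) needed to apply Lemma~\ref{lemme_Tomasevic}, after which the uniform-in-$\varepsilon$ bound transfers to $\rho$ in the limit.
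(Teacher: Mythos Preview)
Your Volterra inequality
\[
\|\rho_t\|_r \le \|\rho_0\|_r + C\int_0^t \frac{\|\rho_s\|_r}{\sqrt{s}\,\sqrt{t-s}}\,ds
\]
is correct, but the closing step fails. Lemma~\ref{lemme_Tomasevic} requires the form $u(t)\le C_T + C_T\,t^{\alpha}\int_0^t \frac{u(s)}{\sqrt{t-s}\,s^{\alpha}}\,ds$; the prefactor $t^{\alpha}$ is not cosmetic but is precisely what makes the iteration in its proof collapse to a classical Gronwall. Your inequality has no such prefactor, and the remark that ``the factor $t^{1/2}$ [is] absorbed in the constants through $T\le 1$'' goes the wrong way: $t^{1/2}\le 1$ lets you \emph{remove} a $t^{1/2}$, not \emph{insert} one. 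If you iterate your inequality once you get, after Fubini and the crude bound $s\ge r$,
\[
u(t)\le C + C\pi + C^2\pi\int_0^t \frac{u(r)}{r}\,dr,
\]
which diverges at $r=0$ for any merely bounded $u$. So neither the boundedness of $p^\varepsilon$ at fixed $\varepsilon$ nor the restriction $T\le 1$ rescues the argument; you have correctly identified the obstruction (the kernel has mass $\beta(\tfrac12,\tfrac12)=\pi$ independently of $t$) but the regularization does not circumvent it.

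The paper takes a different route: an iterative bootstrap on the time exponent rather than a Gronwall argument. Starting from the interpolated bounds $\|\rho_t\|_{d/2}\le A_0 t^{-a_0'}$ and $\|\rho_t\|_{d}\le A_0 t^{-a_0}$ (between $\|\rho_t\|_1=1$ and the $L^q$ estimate), one feeds $\|\rho_t\|_d$ into the mild equation for $\|\rho_t\|_{d/2}$ to obtain a strictly smaller exponent $a_1'=2a_0-1<a_0'$; interpolating back between the improved $L^{d/2}$ bound and the $L^q$ bound improves $\|\rho_t\|_d$, and the procedure repeats. The exponents $a_n'$ decrease geometrically to $0$, and a separate fixed-point argument shows the constants $A_n$ stay bounded. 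This is exactly the ``successive interpolation and parabolic regularity'' mechanism flagged in Subsection~\ref{ss_multi_d}, and it is the missing idea in your proposal.
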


\begin{proof}
To simplify the computations, we will fix $q$ in Theorem \ref{6.3.7} such that $q=\frac{3d}{2}$. 
\subparagraph{Step 1}
Note that in view of Interpolation inequality from \cite[p.93]{brezis}, we have already a control on $\sup_{t\geq 0} \|\rho_t\|_r$, for $r\in[1,q]$. Indeed, we know that $\|\rho_t\|_1=1$ for all $t\geq 0$, and by the Definition \ref{6.2.4}, we have $\|\rho_t\|_q \leqslant \frac{C_q}{t^{1-\frac{d}{2q}}}=\frac{C_q}{t^{\frac{2}{3}}}$ for all $t\geq 0$. Thus, we get for $1\leqslant r\leqslant \frac{3d}{2}$,
\begin{align}
\label{estimation_densite_borne_r}
\|\rho_t\|_r \leqslant \|\rho_t\|_q^\theta \leqslant \dfrac{C_q^\theta}{t^{\frac{2}{3}\theta}}, \qquad \forall t\geq 0
\end{align}
where $\theta=\dfrac{1-\frac{1}{r}}{1-\frac{1}{q}}$. In particular, taking $r=\frac{d}{2}$ and $r=d$ in the above, we have that there exists $ A_0>0$ such that
\begin{align}
\label{estimation_densite_borne_2d_d}
\|\rho_t\|_{\frac{d}{2}}\leqslant \frac{A_0}{t^{a_0'}} \qquad \text{and} \qquad \|\rho_t\|_d\leqslant \frac{A_0}{t^{a_0}} \qquad \forall t\geq 0
\end{align}
with $a_0'=\frac{2}{3}(\frac{1-\frac{2}{d}}{1-\frac{2}{3d}})$ and $a_0=\frac{2}{3}(\frac{1-\frac{1}{d}}{1-\frac{2}{3d}})$.
\subparagraph{Step 2}
In \textbf{Step 3} we will use the relationship in (\ref{6.10}) in order to improve the bounds in \eqref{estimation_densite_borne_2d_d}.\\
Assume for a moment that we have proved by induction that, for all $n\geq 1$, there exist $A_n'>0$ and $A_n>0$ such that
\begin{align}
\label{estimation_densite_borne_2d_d_step2.k}
\|\rho_t\|_{\frac{d}{2}}\leqslant \frac{A_n'}{t^{a_n'}} \qquad \text{and} \qquad \|\rho_t\|_d\leqslant \frac{A_n}{t^{a_n}} \qquad \forall t\geq 0,
\end{align}
where $a_n':=\frac{2a_0-1}{2^{n-1}}$ and $a_n:=\frac{2a_0-1}{2^{n+1}}+\frac{1}{2}$. Moreover,assume as well that $(A_n)_{n\geq 1}$ and $(A_n')_{n\geq 1}$ satisfy
\begin{align}
\label{estimation_densite_borne_2d_d_step2.k_relation}
A_{n+1}' := &\|\rho_0\|_{d/2} + d \chi\|\nabla c_0\|_d \beta_{\frac{1}{6}}A_{n} + d\chi \beta_{\frac{1}{6}}^2 A_{n}^2,\\
\label{estimation_densite_borne_2d_d_step2.k_relation_2}
A_{n+1} := & A_{n+1}'^{\frac{1}{4}}~C_q^{\frac{3}{4}},
\end{align}
where $\beta_{\frac{1}{6}}$ is defined in \eqref{6.3.2.1}.\\
Then, in \eqref{estimation_densite_borne_2d_d_step2.k} take the limit as $n \rightarrow \infty$. Since $a_n'\rightarrow 0$ as $n\rightarrow\infty$, it only remains to prove that $\sup_{n\geq 1} A_n'<\infty$. Then, we will get that there exists $C>0$ such that $\sup_{t\geq 0}\|\rho_t\|_{\frac{d}{2}}\leq C$. Our desired conclusion then follows from Interpolation inequality from \cite[p.93]{brezis}.\\
To do so, we study the sequence defined by iteration $A_{n+1}=f(A_n)$ where 
$$f~:~x~\mapsto \left(\|\rho_0\|_{d/2} + d \chi\|\nabla c_0\|_d \beta_{\frac{1}{6}}x + d\chi \beta_{\frac{1}{6}}^2 x^2\right)^{\frac{1}{4}}~C_q^{\frac{3}{4}}.$$
We know that $f$ is positive, continuous, strictly increasing, verifies $f(0)>0$ and grows as a $O(\sqrt{x})$ for large values of $x$. Thus, $f$ has at least one fixed point. Moreover, solving $f(x)=x$ is equivalent to finding the roots of a polynomial of degree 4, thus, we have at most 5 fixed points. Let us denote $y$ the biggest one, $I:=[0,y]$ and $J:=[y,\infty)$. Since we said that $f$ is increasing, we have that $f(I)\subset I$ and $f(J)\subset J$. Thus, if $A_1 \in I$, the sequence $(A_n)_{n\geq 1}$ is in $I$ and thus bounded above by $y$. And if $A_1 \in J$, the sequence $(A_n)_{n\geq 1}$ is in $J$, and since $f(x) < x$ on $J$ (because of the fact that on $J$, $f$ grows as a $O(\sqrt{x})$ for large values of $x$), the sequence $(A_n)_{n\geq 1}$ is decreasing, and bounded above by $A_1$. Thus, in each case $(A_n)_{n\geq 1}$ is bounded above, and since $A_{n} = A_{n}'^{\frac{1}{4}}~C_q^{\frac{3}{4}}$, $(A_n')_{n\geq 1}$ is bounded above by a $C>0$ too.

\subparagraph{Step 3} It remains to prove the recurrent relationship given in Step 2.\\
First, we show that \eqref{estimation_densite_borne_2d_d_step2.k}, \eqref{estimation_densite_borne_2d_d_step2.k_relation} and \eqref{estimation_densite_borne_2d_d_step2.k_relation_2} hold for $n=1$, starting from \eqref{estimation_densite_borne_2d_d}. From (\ref{6.10}), we have, by convolution inequality and Cauchy-Schwarz inequality
\begin{align}
\label{estimation_densite_borne}
\nonumber
\|\rho_t\|_{d/2} & \leqslant \|g_t\ast \rho_0\|_{d/2} +  \sum_{i=1}^d \int_0^t \|\nabla_i g_{t-s}\ast(\rho_s b^{i}_s)\|_{d/2} ds\\
& \leqslant  \|\rho_0\|_{d/2} + d \int_0^t \|\nabla_1 g_{t-s} \|_{1} \|\rho_s\|_{d} \|b^{1}_s\|_{d} ds.
\end{align}
Combining a convolution inequality, Lemma \ref{lemme_normale}, (\ref{estimation_densite_borne_2d_d}), and equality \eqref{6.3.2}, we have
\begin{align}
\nonumber
\|b^{1}_s\|_d & \leqslant \chi \|g_s \ast \nabla c_0\|_d + \chi  \int_0^s \|K_{s-r}\|_1 \|p_r\|_d dr \\
\nonumber
&\leqslant \chi\|\nabla c_0\|_d + \chi C_1(1)A_0 \int_0^s \dfrac{1}{\sqrt{s-r}}\dfrac{1}{r^{a_0}}dr  \\
\label{estimate_b}
&= \chi\|\nabla c_0\|_d + \chi C_1(1)A_0\beta\left(a_0,\frac{1}{2}\right) \dfrac{1}{s^{a_0-\frac{1}{2}}}.
\end{align}
Then, plug \eqref{estimate_b} in (\ref{estimation_densite_borne}) and use the estimate in (\ref{estimation_densite_borne_2d_d}) and equality \eqref{6.3.2}. It comes 
\begin{align}
\label{estimation_densite_borne_2}
\|\rho_t\|_{d/2} & \leqslant  \|\rho_0\|_{d/2} + d \int_0^t \frac{C_1(1)}{\sqrt{t-s}} \frac{A_0}{s^{a_0}} \left( \chi\|\nabla c_0\|_d + \chi C_1(1)A_0\beta\left(a_0,\frac{1}{2}\right) \dfrac{1}{s^{a_0-\frac{1}{2}}} \right) ds\\
\nonumber
&\leqslant \|\rho_0\|_{d/2} + \left[dC_1(1)A_0\chi\|\nabla c_0\|_d \beta\left(a_0,\frac{1}{2}\right)\right]\frac{1}{t^{a_0-\frac{1}{2}}} + \left[d\chi C_1(1)^2A_0^2\beta\left(a_0,\frac{1}{2}\right)\beta\left(2a_0-\frac{1}{2},\frac{1}{2}\right)\right] \dfrac{1}{t^{2a_0-1}}.
\end{align}
Since $T\leq 1$, we have that $\frac{1}{t^{a_0-\frac{1}{2}}}\leq \frac{1}{t^{2a_0-1}}$, and therefore
$$\|\rho_t \|_{\frac{d}{2}} \leqslant \dfrac{A_1'}{t^{2a_0-1}}, $$
where $$A_1'=\|\rho_0\|_{d/2} + d A_0\chi\|\nabla c_0\|_d \beta_{\frac{1}{6}} + d\chi A_0^2 \beta_{\frac{1}{6}}^2.$$
Thus, interpolating between $\|\rho_t\|_{\frac{d}{2}}$ and $\|\rho_t\|_q$ we obtain
$$\|\rho_t\|_d \leqslant \|\rho_t\|_{\frac{d}{2}}^{\frac{1}{4}} ~ \|\rho_t\|_q^{\frac{3}{4}} \leqslant \frac{A_1'^{\frac{1}{4}}}{t^{\frac{1}{4}(2a_0-1)}} ~ \frac{C_q^{\frac{3}{4}}}{t^{\frac{3}{4}\frac{2}{3}}}=\frac{A_1}{t^{\frac{a_0}{2}+\frac{1}{4}}},$$
where $A_1 := A_1'^{\frac{1}{4}}~C_q^{\frac{3}{4}}.$
\newline
Hence, \eqref{estimation_densite_borne_2d_d_step2.k}, \eqref{estimation_densite_borne_2d_d_step2.k_relation} and \eqref{estimation_densite_borne_2d_d_step2.k_relation_2} hold for $n=1$.\\
Finally, suppose that \eqref{estimation_densite_borne_2d_d_step2.k}, \eqref{estimation_densite_borne_2d_d_step2.k_relation} and \eqref{estimation_densite_borne_2d_d_step2.k_relation_2} hold for a fixed $n\geq 1$. Repeat the above calculations using \eqref{estimation_densite_borne_2d_d_step2.k} instead of \eqref{estimation_densite_borne_2d_d}. We obtain that the desired relations hold for $n+1$.
\end{proof}

We can now control the quantity $\mathcal{N}^r_t(\rho)$ defined in Definition \ref{6.17} for the different $\frac{d}{2}\leq r \leq \infty$.

\begin{lemma}
\label{6.3.9}
Let $T\leq 1$.\\
Under the same assumptions as in Theorem \ref{6.2.5} and with $(\rho_t,c_t)_{t\geq 0}$ a solution to \eqref{EDP_KS} in the sense of Definition \ref{6.2.4}, we have for $\frac{d}{2} \leqslant r<q$,
$$\mathcal{N}^r_t(\rho)\leqslant C_r.$$
\end{lemma}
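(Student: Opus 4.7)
The plan is to obtain the bound on $\mathcal{N}_t^r(\rho)$ for $\frac{d}{2}\leq r<q$ by interpolation between the two endpoint estimates that are already at our disposal: on the one hand, Lemma~\ref{estimation_densite_borne_lemme} gives us $\sup_{0<s\leq T}\|\rho_s\|_{d/2}\leq C_{d/2}$ uniformly in $T\leq 1$, and on the other hand, Definition~\ref{6.2.4} directly provides the Gaussian-type bound $\|\rho_s\|_q\leq C_q\,s^{-(1-\frac{d}{2q})}$. No further use of the mild equation~\eqref{6.10} should be needed here.

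For the endpoint $r=\frac{d}{2}$, the exponent $1-\frac{d}{2r}$ vanishes, so $\mathcal{N}_t^{d/2}(\rho)=\sup_{0<s\leq t}\|\rho_s\|_{d/2}$ and the bound is immediate from Lemma~\ref{estimation_densite_borne_lemme}. For $r\in(\frac{d}{2},q)$, I would apply the standard $L^p$-interpolation inequality
\begin{align*}
\|\rho_s\|_r \leq \|\rho_s\|_{d/2}^{\theta}\,\|\rho_s\|_q^{1-\theta},
\end{align*}
where $\theta\in(0,1)$ is uniquely determined by $\frac{1}{r}=\frac{2\theta}{d}+\frac{1-\theta}{q}$, that is $1-\theta=\frac{(\frac{2}{d}-\frac{1}{r})}{(\frac{2}{d}-\frac{1}{q})}$.

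Combining the two endpoint bounds yields
\begin{align*}
s^{1-\frac{d}{2r}}\,\|\rho_s\|_r \leq C_{d/2}^{\theta}\,C_q^{1-\theta}\, s^{1-\frac{d}{2r}-(1-\frac{d}{2q})(1-\theta)}.
\end{align*}
The key (and pleasant) algebraic check is that the exponent of $s$ vanishes identically: indeed, a direct computation gives
\begin{align*}
\Bigl(1-\tfrac{d}{2q}\Bigr)(1-\theta) = \frac{2q-d}{2q}\cdot\frac{(2r-d)q}{(2q-d)r} = \frac{2r-d}{2r}=1-\tfrac{d}{2r},
\end{align*}
so the $s$-dependence disappears and $\mathcal{N}_t^r(\rho)\leq C_{d/2}^{\theta}C_q^{1-\theta}=:C_r$, uniformly in $t\leq T\leq 1$.

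I do not expect any real obstacle here: the only thing that could go wrong is the exponent balance, but the identity above shows that the scaling of the $L^{d/2}$ bound (no blow-up at $s=0$) and that of the $L^q$ bound (blow-up of order $s^{-(1-\frac{d}{2q})}$) are perfectly matched by the interpolation weights for every intermediate $r\in(\frac{d}{2},q)$. This is exactly why Lemma~\ref{estimation_densite_borne_lemme} was established first in $L^{d/2}$: it provides the endpoint without time-singularity that closes the interpolation.
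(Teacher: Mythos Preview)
Your proof is correct and follows exactly the same route as the paper: interpolate between the $L^{d/2}$ bound of Lemma~\ref{estimation_densite_borne_lemme} and the $L^q$ bound from Definition~\ref{6.2.4}, and observe that the resulting time exponent is precisely $1-\tfrac{d}{2r}$. The only difference is cosmetic (you write out the exponent identity explicitly, whereas the paper states the final equality directly).
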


\begin{proof}
We note that, according to Lemma \ref{estimation_densite_borne_lemme}, $\|\rho_t\|_{d/2} \leqslant C$ and with Lemma \ref{6.3.7} we know that $\|\rho_t\|_q \leqslant \dfrac{C_q}{t^{1-\frac{d}{2q}}}$. Then, with interpolation's inequality from \cite[p.93]{brezis} we have, with $\frac{1}{r}=\frac{2}{d}\alpha + \frac{1-\alpha}{q}$, and thus $1-\alpha = \dfrac{1-\frac{d}{2r}}{1-\frac{d}{2q}}$,
$$\|\rho_t\|_r \leqslant C^\alpha \dfrac{C_q^{1-\alpha}}{t^{(1-\alpha)\left(1-\frac{d}{2q}\right)}}=\dfrac{C_r}{t^{1-\frac{d}{2r}}}.$$
\end{proof}

\begin{cor}
\label{6.3.10-2}
Let $T\leq 1$.\\
Under the same assumptions as in Theorem \ref{6.2.5} and with $(\rho_t,c_t)_{t\geq 0}$ a solution to \eqref{EDP_KS} in the sense of Definition \ref{6.2.4}, for $d\leqslant r \leqslant\infty$ we have
$$\|b_t\|_r\leqslant \dfrac{C_r(\chi,\|\nabla c_0\|_d)}{t^{\frac{1}{2}-\frac{d}{2r}}},$$
where $b$ is defined by \eqref{def_b}.
\end{cor}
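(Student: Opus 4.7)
The plan is to imitate the proof of Corollary~\ref{6.3.10}, replacing the regularized kernel $K^\varepsilon$ by $K$ and the uniform-in-$\varepsilon$ estimates of Theorem~\ref{6.3.7} by the density estimates for the true solution $\rho$ supplied by Lemma~\ref{6.3.9}. Recall the decomposition $b(t,x,\rho)=\chi b_0(t,x)+\chi\int_0^t K_{t-s}\ast\rho_s(x)\,ds$, so I bound the linear and non-linear parts separately and add them up. The linear term is immediately controlled by Lemma~\ref{6.3.3} and produces the correct decay $t^{-(1/2-d/(2r))}$ for every $r\in[d,\infty]$.

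First I handle $r=q\in(d,2d)$ fixed in Theorem~\ref{6.3.7}. For the non-linear part, Young's convolution inequality gives $\|K_{t-s}\ast\rho_s\|_q\leq\|K_{t-s}\|_1\|\rho_s\|_q$; combining Lemma~\ref{lemme_normale} with the density estimate $\|\rho_s\|_q\leq C_q\,s^{-(1-d/(2q))}$ from Definition~\ref{6.2.4} and integrating via the beta identity \eqref{6.3.2} yields the expected $t^{-(1/2-d/(2q))}$ decay. Next, for $r=\infty$, I apply H\"older's inequality with conjugate exponents $q,q'$: $|K_{t-s}\ast\rho_s(x)|\leq\|K_{t-s}\|_{q'}\|\rho_s\|_q$. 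The resulting integrand is of order $(t-s)^{-(d(1-1/q')/2+1/2)}\,s^{-(1-d/(2q))}$, and since $q>d$ both exponents are strictly less than one, so \eqref{6.3.2} produces the claimed $t^{-1/2}$ decay.

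The intermediate range $r\in[d,q)$ follows exactly as for $r=q$, with $\|\rho_s\|_r$ now controlled by $\mathcal{N}^r_t(\rho)\leq C_r$ from Lemma~\ref{6.3.9} (together with Lemma~\ref{estimation_densite_borne_lemme} when $r<d$). The remaining range $r\in(q,\infty)$ is obtained by interpolating the $L^q$ and $L^\infty$ bounds just established: if $\theta=q/r$, then $\theta(1/2-d/(2q))+(1-\theta)/2=1/2-d/(2r)$, so the time exponent is preserved. Since every required ingredient is already in place in Section~\ref{S_Preliminaries} and Lemmas~\ref{estimation_densite_borne_lemme}--\ref{6.3.9}, I do not anticipate any genuine obstacle; the only thing to verify is the integrability of the beta integrals, which is ensured by the choice $q\in(d,2d)$ that has governed the whole argument.
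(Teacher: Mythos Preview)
Your proof is correct and follows essentially the same skeleton as the paper's argument: split $b$ into its linear and non-linear parts, control the linear part by Lemma~\ref{6.3.3}, and control the non-linear part using the density estimates for $\rho$. For the range $d\leq r<q$ your argument coincides exactly with the paper's first case, using $\|K_{t-s}\ast\rho_s\|_r\leq\|K_{t-s}\|_1\|\rho_s\|_r$ together with Lemma~\ref{6.3.9}.

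The only methodological difference is in the range $q\leq r\leq\infty$. You mimic Corollary~\ref{6.3.10}: prove the endpoints $r=q$ and $r=\infty$ separately, then interpolate. The paper instead handles this whole range in a single stroke via Young's inequality: it chooses $p$ with $1+\tfrac{1}{r}=\tfrac{1}{p}+\tfrac{1}{q}$ (so $\tfrac{2}{d}<\tfrac{1}{p}\leq 1$) and bounds $\|K_{t-s}\ast\rho_s\|_r\leq\|K_{t-s}\|_p\|\rho_s\|_q$, which directly yields an integrand of the form $(t-s)^{-(\frac{d}{2}(1-\frac{1}{p})+\frac{1}{2})}s^{-(1-\frac{d}{2q})}$ and hence the desired decay after applying \eqref{6.3.2}. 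Both routes are equally short and rely on the same ingredients; yours has the minor advantage of reusing the earlier corollary verbatim, while the paper's avoids the extra interpolation step. (Your parenthetical reference to Lemma~\ref{estimation_densite_borne_lemme} ``when $r<d$'' is superfluous here, since the statement only concerns $r\geq d$.)
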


\begin{proof}
According to Lemma \ref{6.3.3} we have
\begin{align}
\label{6.25}
\|b^{i}_t\|_r \leqslant \dfrac{C(\chi,\|\nabla c_0\|_d)}{t^{\frac{1}{2}-\frac{d}{2r}}}+ \chi \int_0^t \|K_{t-s}^{i}\ast \rho_s\|_rds.
\end{align}
Now we have two cases
\begin{itemize}
\item[$\bullet$] If $d\leqslant r<q$, then Corollary \ref{6.3.9} implies immediately
$$\|K_{t-s}^{i}\ast \rho_s\|_r \leqslant \dfrac{C_r}{\sqrt{t-s}s^{1-\frac{d}{2r}}}.$$
\item[$\bullet$] If $q\leqslant r\leqslant \infty$, then we take $p$ such that $\frac{1}{p}=1+\frac{1}{r}-\frac{1}{q}$, and we note that $d<q\leqslant r$ and $\frac{2}{d}<\frac{1}{p}\leqslant 1$. Applying interpolation's inequality from \cite[p.93]{brezis} and Corollary \ref{6.3.9} we get 
$$\|K_{t-s}^{i}\ast \rho_s\|_r \leqslant \dfrac{C_r}{(t-s)^{2-\frac{d}{2p}}s^{1-\frac{d}{2q}}}.$$
\end{itemize}
To conclude, in each case we use \eqref{6.3.2}.
\end{proof}

\begin{lemma}
\label{6.3.9.1}
Let $T\leq 1$.\\
Under the same assumptions as in Theorem \ref{6.2.5} and with $(\rho_t,c_t)_{t\geq 0}$ a solution to \eqref{EDP_KS} in the sense of Definition \ref{6.2.4}, for $q<r<\infty$, we have
$$\mathcal{N}_t^r(\rho)\leqslant C_r.$$
\end{lemma}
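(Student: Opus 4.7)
The plan is to exploit the mild representation \eqref{6.10} exactly as in the proof of Theorem~\ref{6.3.7}, but now plugging in the already established bounds on $\|\rho_s\|_q$ (from Definition~\ref{6.2.4}) and on $\|b_s\|_{b_2}$ from Corollary~\ref{6.3.10-2}, rather than treating these as unknowns. Since the initial condition $\rho_0\in L^{d/2}$, a direct Young inequality gives
\[
\|g_t\ast\rho_0\|_r\leq \|g_t\|_s\,\|\rho_0\|_{d/2},\qquad \tfrac{1}{s}=1+\tfrac{1}{r}-\tfrac{2}{d},
\]
and Lemma~\ref{lemme_normale} yields $\|g_t\ast\rho_0\|_r\leq C\,t^{-(1-\frac{d}{2r})}$, which is exactly the decay claimed by $\Nn_t^r(\rho)$.

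For the nonlinear contribution $\sum_i\int_0^t\|\nabla_i g_{t-s}\ast(\rho_s b_s^i)\|_r\,ds$, I would successively apply Young's and Hölder's inequalities with free exponents $p$ and $b_2$, namely
\[
\|\nabla_i g_{t-s}\ast(\rho_s b_s^i)\|_r\leq \|\nabla_i g_{t-s}\|_p\,\|\rho_s\|_q\,\|b_s^i\|_{b_2},\qquad \tfrac{1}{p}+\tfrac{1}{q}+\tfrac{1}{b_2}=1+\tfrac{1}{r}.
\]
Then Lemma~\ref{lemme_normale}, Definition~\ref{6.2.4} and Corollary~\ref{6.3.10-2} bound this integrand by a constant times
\[
(t-s)^{-\frac{d}{2}(1-\frac{1}{p})-\frac{1}{2}}\,s^{-(1-\frac{d}{2q})}\,s^{-(\frac{1}{2}-\frac{d}{2b_2})},
\]
provided $b_2\geq d$. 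The key step is to choose $b_2$ so that this kernel is integrable on $(0,t)$; reading off \eqref{6.3.2}, this amounts to
\[
\tfrac{1}{d}-\tfrac{1}{q}<\tfrac{1}{b_2}<\tfrac{1}{d}-\tfrac{1}{q}+\tfrac{1}{r},
\]
an interval which is nonempty for every $r>q>d$, and which automatically forces $b_2>d$ so that Corollary~\ref{6.3.10-2} applies.

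Once such $b_2$ is fixed, applying identity \eqref{6.3.2} yields a time factor whose exponent collapses: using $1-\tfrac{1}{p}=\tfrac{1}{q}+\tfrac{1}{b_2}-\tfrac{1}{r}$, the power of $t$ becomes
\[
1-\Bigl(\tfrac{d}{2}(1-\tfrac{1}{p})+\tfrac{1}{2}\Bigr)-\Bigl(\tfrac{3}{2}-\tfrac{d}{2q}-\tfrac{d}{2b_2}\Bigr)=\tfrac{d}{2r}-1,
\]
and therefore the nonlinear part of $\|\rho_t\|_r$ is also bounded by $C_r\,t^{-(1-\frac{d}{2r})}$. Combining with the $g_t\ast\rho_0$ term and multiplying by $t^{1-\frac{d}{2r}}$ gives $\Nn_t^r(\rho)\leq C_r$ uniformly on $[0,T]$ with $T\leq 1$. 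The only nontrivial point is the verification of the interval for $b_2$; the two endpoint conditions pull in opposite directions and it is precisely the assumption $r>q>d$ (together with the already established $L^q$ decay $s^{-(1-d/(2q))}$ from the previous lemmas) that makes the window nonempty. Everything else is a direct application of Young, Hölder, \eqref{6.3.2} and \eqref{6.3.2.1}.
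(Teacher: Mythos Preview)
Your proof is correct and follows the same overall strategy as the paper: apply Young's and H\"older's inequalities to the mild equation \eqref{6.10}, plug in the known decay rates for $\rho_s$ and $b_s$, and close with the beta identity \eqref{6.3.2}. The only difference is in the choice of H\"older exponents. The paper fixes specific indices $q_1,q_2$ with $\tfrac{1}{q_1}+\tfrac{1}{q_2}=1+\tfrac{1}{r}$ and then splits $\|\rho_s b_s\|_{q_2}\leq\|\rho_s\|_{\lambda_1q_2}\|b_s\|_{\lambda_2q_2}$ with $\lambda_1=q/d$, so that $\lambda_1q_2\in(\tfrac{d}{2},q)$ and Lemma~\ref{6.3.9} is invoked for the density factor. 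You instead keep $\|\rho_s\|_q$ from Definition~\ref{6.2.4} directly and leave $b_2$ free, then identify the admissible window $\tfrac{1}{d}-\tfrac{1}{q}<\tfrac{1}{b_2}<\tfrac{1}{d}-\tfrac{1}{q}+\tfrac{1}{r}$ by reading off the two integrability constraints; this has the minor advantage of bypassing Lemma~\ref{6.3.9} entirely. In both cases the exponent bookkeeping collapses to the same final power $t^{-(1-\frac{d}{2r})}$, so the two arguments are really the same proof with different parametrisations of the H\"older split.
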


\begin{proof}
Let $1<q_1<\frac{d}{d-1}$ and $\frac{d}{2}<q_2<d$ such that $\frac{1}{q_1}=\frac{d-1}{d}+\frac{d-1}{dr}$ and $\frac{1}{q_2}=\frac{1}{d}+\frac{1}{dr}$. Thus, $1+\frac{1}{r}=\frac{1}{q_1}+\frac{1}{q_2}$, and according to Lemma \ref{lemme_normale} we have
$$\|\rho_t\|_r \leqslant \|g_t\ast \rho_0\|_r + \sum_{i=1}^d \int_0^t \|\nabla _i g_{t-s}\|_{q_1} \|\rho_s b^{i}\|_{q_2}ds.$$
With Hölder's inequality for $\frac{1}{\lambda_1}+\frac{1}{\lambda_2}=1$ where $\lambda_1=\frac{q}{d}$, we have $\frac{d}{2}<\lambda_1 q_2 < q$ because $d<q<2d$ and $\lambda_2 q_2 \geqslant d$ because $\lambda_2 >2$. Then,
$$\|\rho_s b^{i}\|_{q_2} \leqslant \|\rho_s\|_{\lambda_1 q_2}\|b^{i}_s\|_{\lambda_2 q_2}.$$
Using Lemma \ref{6.3.9} and Corollary \ref{6.3.10-2} we get
$$\|\rho_s b^{i}\|_{q_2} \leqslant \dfrac{C}{s^{1-\frac{d}{2\lambda_1q_2}+\frac{1}{2}-\frac{d}{2\lambda_2q_2}}}=\dfrac{C}{s^{\frac{3}{2}-\frac{d}{2q_2}}}.$$
Thus, we have finally
$$t^{1-\frac{d}{2r}}\|\rho_t\|_r\leqslant C+t^{1-\frac{d}{2r}}\int_0^t \dfrac{C}{(t-s)^{\frac{d}{2}(1-\frac{1}{q_1})+\frac{1}{2}}s^{\frac{3}{2}-\frac{d}{2q_2}}}ds,$$
To conclude, we use \eqref{6.3.2}.
\end{proof}

\subsection{Global properties}
\label{Ss_properties}

We can now prove Theorem \ref{S_properties_mainthm}.

\begin{proof}[Proof of Theorem \ref{S_properties_mainthm}]
We will proceed by iteration.\\
Let $n\geq 1$, suppose that $n\leq T < n+1$ and suppose that we have, for $r_1\leq \frac{d}{2}$, for $r_2 \geq \frac{d}{2}$, and for $r_3 \geq d$ with $C_r^n$ not depending on $t$, 
$$ \sup_{0 \leq t\leq n} \|\rho_t\|_{r_1} \leqslant C_{r_1}^n, \qquad \mathcal{N}^{r_2}_n(\rho) \leq C^n_{r_2}, \qquad \text{and} \qquad \sup_{0\leq t \leq n} t^{\frac{1}{2}-\frac{d}{2r_3}}\|b_t\|_{r_3}\leq C^n_{r_3}.$$
One can follow the exact same proofs, the only change is to notice that in Step 4 of Lemma \ref{estimation_densite_borne_lemme} we have for all $n\leq t\leq T$, instead of \eqref{estimation_densite_borne}
$$\rho_t = g_{t-n} \ast \rho_n + \sum \int_n^t \nabla_i g_{t-s} \ast (\rho_sb_s)ds$$
and instead of \eqref{estimate_b}
$$b_s = g_{s-n}\ast b_n + \chi \int_n^s \nabla g_{s-u}\ast \rho_u du.$$
Finally one gets instead of \eqref{estimation_densite_borne_2}
$$\|\rho_t\|_{\frac{d}{2}} \leq \|\rho_n\|_{\frac{d}{2}} + d \int_n^t \dfrac{C_1(1)}{\sqrt{t-s}}\dfrac{A_0}{s^{a_0}}\left[\|b_n\|_d + \chi \int_n^s \dfrac{C_1(1)}{\sqrt{t-u}}\dfrac{A_0}{u^{a_0}}du\right]ds.$$
The rest of the proofs is the same.
\end{proof} 

\newpage
\bibliography{biblio}

\end{document}